\newtheorem{theorem}{Theorem}[section]
\newtheorem{proposition}[theorem]{Proposition}
\newtheorem{lemma}[theorem]{Lemma}
\newtheorem{corollary}[theorem]{Corollary}
\newtheorem{conjecture}[theorem]{Conjecture}
\theoremstyle{definition}
\newtheorem{definition}[theorem]{Definition}
\newtheorem{ex}[theorem]{Example}
\newtheorem{remark}[theorem]{Remark}
\newtheorem{example}[theorem]{Example}
\numberwithin{equation}{section}
\renewcommand{\vec}{\underline}
\DeclareMathOperator{\Aut}{Aut}
\newcommand{\N}{\mathbb{N}}
\newcommand{\q}{\mathbb{Q}}
\newcommand{\Q}{\mathbb{Q}}
\newcommand{\Z}{\mathbb{Z}}
\newcommand{\z}{\mathbb{Z}}
\newcommand{\R}{\mathbb{R}}
\newcommand{\mzv}{\mathcal{Z}} % space of MZV
\newcommand{\qsh}{\ast}
\newcommand{\QA}{\Q\langle A \rangle}
\DeclareRobustCommand{\ai}{\genfrac{[}{]}{0pt}{}}
\newcommand{\dep}{\operatorname{dep}}
\newcommand{\PP}{\mathscr{P}}
\newcommand{\MS}{\mathbb{P}}
\newcommand{\MM}{\mathbb{M}}
\newcommand{\QM}{ \widetilde{M}}
\DeclareRobustCommand{\ar}{\genfrac{}{}{0pt}{}}
\newcommand{\ms}[3][]{P_{#1}\!\left(\ar{#2}{#3} \right)}
\newcommand{\mssmall}[3][]{P_{#1}\bigl(\ar{#2}{#3} \bigr)}
\newcommand{\hm}[2][]{H_{#1}\!\left( #2 \right)}
\newcommand{\jm}[2][]{J_{#1}\!\left( #2 \right)}
\newcommand{\msl}[4][]{P_{#1}\!\left( \ar{#2}{#3} ; #4  \right)}
\newcommand{\msexpl}[4][]{\exp P_{#1}\!\left( \ar{#2}{#3} ; #4  \right)}
\newcommand{\msexp}[3][]{\exp P_{#1}\bigg(\ar{#2}{#3} \bigg)}
\newcommand{\FF}{\mathcal{F}}  %polynomial sequence
\newcommand{\wh}{\mathfrak{H}} 
\newcommand{\wj}{\mathfrak{J}} 
\newcommand{\ws}{\mathfrak{P}}
\newcommand{\df}{\diamond_{\FF}}
\newcommand{\qsf}{\qsh_{\FF}}
\newcommand{\wsn}{\mathfrak{N}} % Ideal generated by elements in S / S^1
\newcommand{\siso}{\Xi} % isomorphism from words to pol. fct 
\newcommand{\gena}{\mathfrak{A}} % generating series for word
\newcommand*\circled[1]{\tikz[baseline=(char.base)]{
    \node[shape=circle,draw,inner sep=0.5pt] (char) {$#1$};}}
\newcommand{\ost}{\circledast}
\newcommand{\osh}{\raisebox{1.3pt}{\;\circled{\scalebox{0.53}{$\shuffle$}}}\;}
\newcommand{\oshh}{\raisebox{1.3pt}{\circled{\scalebox{0.53}{$\shuffle$}}}}
\DeclareRobustCommand{\ai}{\genfrac{[}{]}{0pt}{}}
\DeclareRobustCommand{\bi}{\genfrac{(}{)}{0pt}{}}
\DeclareRobustCommand{\zebi}{\zeta\genfrac{(}{)}{0pt}{}}
\DeclareMathOperator{\degree}{deg}
\DeclareMathOperator{\wt}{wt}
\DeclareMathOperator{\Zdegree}{Z^{\degree}}
\renewcommand{\=}{\: =\: }
\newcommand{\defis}{\: :=\: }
\newcommand{\+}{\,+\,}
\newcommand{\meno}{\,-\,}
\title{Partitions, Multiple Zeta Values and the $q$-bracket}
\author{Henrik Bachmann}
\address{Graduate School of Mathematics,  Nagoya University, Nagoya, Japan.}
\email{henrik.bachmann@math.nagoya-u.ac.jp}
\author{Jan-Willem van Ittersum}
\address{Mathematisch Instituut, Universiteit Utrecht, Postbus 80.010, 3508 TA Utrecht, The Netherlands}
\curraddr{Max-Planck-Institut f\"ur Mathematik, Vivatsgasse 7, 53111 Bonn, Germany.
}
\email{\href{mailto:ittersum@mpim-bonn.mpg.de}{ittersum@mpim-bonn.mpg.de}}
\subjclass[2020]{Primary 
05A17, %Combinatorial aspects of partitions of integers 
11M32; %multizeta values
Secondary 11F11%Holomorphic modular forms of integral weight
}
\keywords{Functions on partitions, modular forms, q-bracket, multiple zeta values}
\begin{document}
\date{\today}

\maketitle

\begin{abstract} We provide a framework for relating certain $q$-series defined by sums over partitions to multiple zeta values. In particular, we introduce a space of polynomial functions on partitions for which the associated $q$-series are $q$-analogues of multiple zeta values. By explicitly describing the (regularized) multiple zeta values one obtains as $q\to 1$, we extend previous results known in this area. 
Using this together with the fact that other families of functions on partitions, such as shifted symmetric functions, are elements in our space will then give relations among ($q$-analogues of) multiple zeta values. Conversely, we will show that relations among multiple zeta values can be `lifted' to the world of functions on partitions, which provides new examples of functions where the associated $q$-series are quasimodular.
%a `lift' of relations among multiple zeta values to the world of functions on partitions. 
\end{abstract}
%\tableofcontents

% \begin{abstract}
% We provide a framework to relate results on certain quasimodular $q$-series defined by sums over partitions to results on multiple zeta values. In particular, we introduce a certain space of polynomial functions on partitions for which the associated $q$-series %under the $q$-bracket
% are $q$-analogues of multiple zeta values. We extend previous works on these $q$-analogues, by explicitly describing the (regularized) multiple zeta values one obtains as $q\to 1$. %as certain (regularized) bi-multiple zeta values. 
% Using this together with the fact that other families of functions on partitions, such as shifted symmetric functions, are elements in our space will then give a `lift' of relations among multiple zeta values to the world of functions partitions. \jw{Second approach, but we are certainly not there yet.}
% \end{abstract}

\section{Introduction}
The purpose of this note is to introduce a framework that can be seen as a bridge between the theory of functions on partitions and ($q$-analogues of) multiple zeta values. Multiple zeta values (see \eqref{eq:defmzv}) are real numbers appearing in various areas of mathematics and theoretical physics. These real numbers satisfy numerous relations, such as the so-called double shuffle relations \cite{IKZ}. For these numbers, there exist various different $q$-analogue models, which are $q$-series degenerating to multiple zeta values as $q\rightarrow 1$. For most of these $q$-analogues, there exist counterparts for the double shuffle relations, and their algebraic setups are well-understood  \cite{BK2,Bra,Bri2,Sin,Zh}. In this note, we will generalize this setup even further.
%by considering a certain class of functions on partitions. 
We will show that there is a natural analogue of the double shuffle relations on all functions on partitions and then introduce a class of functions that we call \emph{partition analogues of multiple zeta values}. These functions can be seen as the counterpart of $q$-analogues of multiple zeta values after applying the so-called $q$-bracket, introduced by Bloch and Okounkov in \cite{BO}. The space of partition analogues of multiple zeta values contains various classical types of functions on partitions, such as the shifted symmetric functions, which then, by using the results in this work, provide new tools to obtain relations among ($q$-analogues of) multiple zeta values.

Denote by~$\PP$ the set of all partitions of integers. To a function $f:\PP\to \q$, we associate (i) a degree, (ii) a limit~$\Zdegree(f)$ and (iii) a power series $\langle f \rangle_q \in \q\llbracket q \rrbracket$, in such a way that asymptotically
\[ (1-q)^{\mathrm{deg}(f)}\langle f \rangle_q \= \Zdegree(f) \+ O(1-q) \]
for real~$q$.

To start with the latter, the \emph{$q$-bracket} of $f$ is defined as
\begin{align}\label{eq:qbrac}
\langle f \rangle_q \defis \frac{\sum_{\lambda \in \PP} f(\lambda)\,q^{|\lambda|}}{\sum_{\lambda\in\PP} q^{|\lambda|}} \:\in\: \q\llbracket q \rrbracket,
\end{align}
where $|\lambda|$ denotes the integer that $\lambda$ is a partition of. In case $f(\lambda)$ has at most polynomial growth in $|\lambda|$, its $q$-bracket is holomorphic for $|q|<1$. Moreover, we can associate a \emph{degree} to $f$ by
\begin{equation}\label{eq:degree} \degree(f) \= \inf_{a\in \R}\bigl\{\lim_{q\to 1}(1-q)^a \langle f \rangle_q \text{ converges}\bigr\}.\end{equation}
We, then, define $\Zdegree(f)\in \mathbb{R}\cup\{\pm\infty\} $ to be the value of the corresponding limit\footnote{Here and in the rest of the work we understand $q \rightarrow 1$ as the limit where $q$ is real and $0 < q < 1$.} $\lim_{q\to 1}(1-q)^{\degree(f)} \langle f \rangle_q$ whenever it exists (as it does for all functions in our work). %More precisely,
%\[\Zdegree(f) = \liminf_{q\to 1} (1-q)^{\degree(f)}\langle f \rangle_q,\]
%
%Example: $\zeta(2)$
For example, for $f(\lambda)=|\lambda|$ we have
\[ \langle f \rangle_q \= \sum_{n=1}^\infty \sigma(n) \,q^{n}, \quad \degree(f)=2, \quad \Zdegree(f) \= \zeta(2),\]
where $\sigma(n)=\sum_{d\mid n} d$ denotes the divisor sum of~$n$ and $\zeta(k)=\sum_{m\geq 1} \frac{1}{m^k}$ the Riemann zeta value. 

Such limits~$\Zdegree(f)$ occur as the volumes of certain moduli spaces, e.g., in the case of the stratum of one-cylinder square-tiled surfaces \cite{DGZ} or in the case of flat surfaces \cite{CMZ}. In both cases, there are associated functions $f$ in $\Lambda^*$, the space of shifted symmetric functions. That is, $\Lambda^*=\q[Q_2,Q_3,\ldots]$, where $Q_k:\PP\to \q$ is given 
\begin{align}\label{eq:defQk}
   Q_k(\lambda) \defis  \beta_k \+ \frac{1}{(k-1)!}\sum_{i=1}^\infty \bigl( (\lambda_i-i+\tfrac{1}{2})^{k-1}-(-i+\tfrac{1}{2})^{k-1}\bigr), 
\end{align}
where $\lambda=(\lambda_1,\lambda_2,\ldots)$ and 
$\beta_k=\bigl(\frac{1}{2^{k-1}}-1)\frac{B_k}{k!}$ with $B_k$ the $k$-th Bernoulli number.
%and the $\beta_k$ are constants defined by $\sum_k \beta_k z^{k-1} \defis \frac{1}{2\sinh(z/2)}.$ 
By the Bloch--Okounkov theorem \cite{BO} (going back to work of Dijkgraaf and Kaneko--Zagier \cite{Dij,KZ95}) the $q$-brackets~$\langle f \rangle_q$ for $f\in \Lambda^*$ are quasimodular forms. The space of  quasimodular forms~$\QM=\q[G_k \mid k=2,4,6,\ldots]$ is generated by the Eisenstein series~$G_k$ for all even~$k\geq 2$
\begin{align}\label{eq:eisenstein}
	G_k(q) \defis -\frac{B_k}{2k!}\+ \frac{1}{(k-1)!}\sum_{m,r\geq 1} m^{k-1} q^{mr}\,,
\end{align}
which are holomorphic functions for $|q|<1$ (or equivalently, for $\tau$ in the complex upper half plane, with $q=e^{2\pi \mathrm{i}\tau}$). 
%that is $\QM=\q[G_k \mid k=2,4,6,\ldots] = 
In fact, $\QM=\q[G_2,G_4,G_6].$ As $\Zdegree(G_k) = \zeta(k)$ and every modular form can be written as the sum of an Eisenstein series and a cusp form $F$ with $\Zdegree(F)=0$, it follows that for $f\in \Lambda^*$ the corresponding limits~$\Zdegree(f)$ are single zeta values. %i.e., $\Zdegree(f)\in \q[\zeta(2)]$. 
Besides the shifted symmetric functions there are various other functions on partitions which give rise to quasimodular forms (see \ \cite{Z}). All of these have limits which lie in $\q[\zeta(2)]$.
We will introduce a space $\MS \subset \Q^\PP$ of partition analogues of multiple zeta values, whose elements always have
\emph{multiple zeta values} as their limit.
 Multiple zeta values are defined for $r\geq 1$ and $k_1\geq 2, k_2,\dots,k_r \geq 1$ by
\begin{align}\label{eq:defmzv}
	\zeta(k_1,\ldots,k_r)\defis \sum_{m_1>\cdots>m_r>0 }\frac{1}{m_1^{k_1}\cdots m_r^{k_r}}\:\in \:\R.
\end{align}
A natural subspace of~$\MS$ is the space $\MM = \{f \in \MS \mid \langle f \rangle_q \in \QM\} \subset \MS$, containing~$\Lambda^*$, whose elements have a quasimodular $q$-bracket and a limit in $\q[\zeta(2)]$. 
%Since there are various results on when certain linear combinations of multiple zeta values are in $\q[\zeta(2)]$, we will see that some of these families can be lifted to linear combinations of elements in $\MS$ which are in $\MM$.
% We aim to extend the Bloch--Okounkov theorem in such a way that the corresponding limits are \emph{multiple zeta values}.
In particular, the space $\MS$ completes the following diagram:

\vspace{-0.4cm}
%\begin{figure}[h!]
%\centering
\begin{equation}    \label{fig:overview}
\begin{tikzcd}[remember picture]
 \   \q^\PP 
 \arrow["\langle \ \rangle_q"  , r] &
 \mathbb{Q}\llbracket q \rrbracket
 \arrow["\text{``}\lim\limits_{q \to 1}\text{''}",r] & 
 \mathbb{R} \\
\MS \,	
\arrow[r] &
\mzv_q 
\arrow[r] &
\mzv\\
\mathllap{\Lambda^*\subset\ }\MM \arrow[r] &
\QM 
\arrow[r] &
\mathbb{Q}[\zeta(2)].\\
\end{tikzcd}
\begin{tikzpicture}[overlay,remember picture]
\path (\tikzcdmatrixname-1-1) to node[midway,sloped]{$\supset$} (\tikzcdmatrixname-2-1);
\path (\tikzcdmatrixname-1-2) to node[midway,sloped]{$\supset$}
(\tikzcdmatrixname-2-2);
\path (\tikzcdmatrixname-1-3) to node[midway,sloped]{$\supset$}
(\tikzcdmatrixname-2-3);
% \path (\tikzcdmatrixname-1-4) to node[midway,sloped]{$\supset$}
% (\tikzcdmatrixname-2-4);
\path (\tikzcdmatrixname-2-1) to node[midway,sloped]{$\supset$} (\tikzcdmatrixname-3-1);
\path (\tikzcdmatrixname-2-2) to node[midway,sloped]{$\supset$} (\tikzcdmatrixname-3-2);
\path (\tikzcdmatrixname-2-3) to node[midway,sloped]{$\supset$}
(\tikzcdmatrixname-3-3);
% \path (\tikzcdmatrixname-2-4) to node[midway,sloped]{$\supset$}
% (\tikzcdmatrixname-3-4);
\end{tikzpicture}
\end{equation}
\vspace{-15pt}
%\end{figure}
\vspace{-10pt}

\noindent Here, $\mzv$ denotes the $\Q$-vector space generated by multiple zeta values, $\mzv_q$ denotes the space of $q$-analogues of multiple zeta values, studied by many authors (see e.g., \cite{BK2,Bri1,Zh} for an overview of different types of $q$-analogues; in this work, we define $\mzv_q$ in~\eqref{eq:defqmzv}, following \cite{BK2}), and the map $\lim_{q \to 1}$ is between quotation marks to indicate that it concerns infinitely many maps ${f\mapsto}{\lim_{q \to 1} (1-q)^a f}$, which are well-defined/ill-defined/regularized depending on $f$ and the value of $a$. 

\subsection*{Polynomial functions on partitions}
Let $p\in \q[x,y]$. In \cite{vI}, the second author studied functions ${T_p:\PP\to \q}$ (not to be confused with Hecke operators) of the form
\[ T_p(\lambda) \= \sum_{m=1}^\infty \sum_{r=1}^{r_m(\lambda)} p(m,r),\]
where $r_m(\lambda)$ denotes the number of times $m$ occurs as a part in the partition~$\lambda$. Similar to the Bloch--Okounkov theorem, for every $f\in \q[T_p \mid p\in \q[x,y], \mathrm{deg}(p) \text{ is odd}]$, the $q$-bracket $\langle f \rangle_q$ is a quasimodular form.
%Note that polynomials in these functions are given by (linear combinations of) $T_p:\PP\to \q$ of the form
%\[ T_p(\lambda) = \sum_{m_1,\ldots,m_n} \sum_{r_i\leq r_{m_i}(\lambda)} p(m_1,\ldots,m_n,r_1,\cdots,r_n),\]
% now with $p\in \q[x_1,\ldots,x_n,y_1,\ldots,y_n]$.
Motivated by this construction, we define the space of \emph{partition analogues of multiple zeta values} $\MS$ as the following space which can be thought of a space of polynomial functions on partitions.
\begin{definition}\label{def:MS}
Let $\MS$ be the image of 
\[\Psi: \bigoplus_{n\geq 0} \q[x_1,\ldots,x_n,y_1,\ldots,y_n] \to \q^{\PP}, \]
where $\Psi$ maps the polynomial~$p(x_1,\ldots,x_n,y_1,\ldots,y_n)$ to 
\[ \lambda \mapsto \sum_{m_1>\ldots>m_n>0}\sum_{r_1=1}^{r_{m_1}(\lambda)}\cdots \sum_{r_n=1}^{r_{m_n}(\lambda)} p(m_1,\ldots,m_n,r_1,\ldots,r_n).\]
Moreover, define $\MM = \{f \in \MS \mid \langle f \rangle_q \in \QM\}.$
\end{definition}
We show that this space $\MS$ completes the diagram \eqref{fig:overview}. In particular, we are able to compute the \emph{degree} and \emph{limit} of elements of $\MS$:
\begin{theorem}\label{thm:main}
Given $r\geq 1$ and $d_i,l_i\in \z_{\geq 0}$ for $i=1,\ldots,r$, let $f=\Psi(\prod_{i} x_i^{d_i}y_i^{l_i})$. Then, 
\begin{equation}\label{eq:deg} \mathrm{deg}(f) \= \max_{j\in \{0,\ldots,r\}}\biggl\{\sum_{i\leq j}(d_i+1)\+\sum_{i> j}(l_i+1)\biggr\}.\end{equation}
Moreover, if the maximum is attained for a unique value of~$j$, then
%, then~$f$ is admissible and
$\Zdegree(f) \in \mzv_{\leq \degree(f)}\,,$ where $\mzv_{\leq k}$ denotes the $\Q$-vector space of multiple zeta values $\zeta(k_1,\ldots,k_r)$ with $k_1+\ldots+k_r\leq k$.
\end{theorem}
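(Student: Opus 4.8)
The plan is to make the $q$-bracket completely explicit, read off $\degree(f)$ from a scaling analysis of its $q\to 1$ asymptotics, and then, under the uniqueness hypothesis, extract the leading coefficient as a product of two multiple zeta value contributions.

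\medskip
\noindent\textbf{Step 1 (compute the bracket).} First I would encode a partition $\lambda$ by its multiplicities $a_m=r_m(\lambda)$, so that $q^{|\lambda|}=\prod_m q^{ma_m}$ and, by Euler, $\sum_{\lambda\in\PP}q^{|\lambda|}=\prod_m(1-q^m)^{-1}$. Writing $f(\lambda)$ in the $a_m$ and interchanging the (coefficientwise finite) sums, the identity $\sum_{a\ge 0}\bigl(\sum_{\rho=1}^a\rho^l\bigr)x^a=(1-x)^{-1}\sum_{\rho\ge 1}\rho^l x^\rho$ lets the factor $(1-q^m)^{-1}$ cancel for every inactive index $m\notin\{m_1,\dots,m_r\}$, while each active index contributes an extra $(1-q^{m_i})$ that cancels the corresponding $(1-x)^{-1}$. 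The outcome is the explicit formula
\[ \langle f\rangle_q\=\sum_{m_1>\dots>m_r>0}\ \sum_{\rho_1,\dots,\rho_r\ge 1}\ \prod_{i=1}^r m_i^{d_i}\rho_i^{l_i}\,q^{m_i\rho_i}\=\sum_{m_1>\dots>m_r>0}\prod_{i=1}^r m_i^{d_i}\,g_{l_i}(q^{m_i}), \]
where $g_l(x)=\sum_{\rho\ge 1}\rho^l x^\rho=A_l(x)(1-x)^{-l-1}$ with $A_l(1)=l!$. This realizes $\langle f\rangle_q$ as a bi-bracket in $\mzv_q$ (cf.\ \cite{BK2}), i.e.\ the middle column of \eqref{fig:overview}.

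\medskip
\noindent\textbf{Step 2 (degree).} Set $q=e^{-t}$ with $t\to 0^+$. Since $1-q^m\sim tm$ for $m=o(1/t)$ while $g_l(q^m)=O(1)$ once $m\gtrsim 1/t$, in the region $m_i\sim t^{-\alpha_i}$, $\rho_i\sim t^{-\beta_i}$ the summand has magnitude $t^{-\sum_i[\alpha_i(d_i+1)+\beta_i(l_i+1)]}$, subject to $\alpha_i+\beta_i\le 1$ (from $e^{-tm_i\rho_i}$) and to $\alpha_1\ge\dots\ge\alpha_r\ge 0$, $\beta_i\ge 0$ (from $m_1>\dots>m_r$, as larger $m_i$ forces larger $\alpha_i$). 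Hence $\degree(f)$ equals the value of the linear program $\max\sum_i[\alpha_i(d_i+1)+\beta_i(l_i+1)]$ over this polytope. Because each coefficient $l_i+1$ is positive, the optimum has $\beta_i=1-\alpha_i$, reducing to maximizing $\sum_i(l_i+1)+\sum_i\alpha_i(d_i-l_i)$ over the chain $1\ge\alpha_1\ge\dots\ge\alpha_r\ge 0$; the vertices of this order polytope are exactly the threshold vectors $\alpha=(1,\dots,1,0,\dots,0)$ with $j$ ones, which yields precisely \eqref{eq:deg}. I would make the heuristic box estimate rigorous by promoting it to matching upper and lower bounds via a uniform Euler--Maclaurin (or Mellin) expansion of $\sum_m m^d g_l(e^{-tm})$.

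\medskip
\noindent\textbf{Step 3 (limit under a unique maximizer).} Let $j^\ast$ be the unique maximizer. Then the leading asymptotics come from the single corner $\alpha_i=1$ (so $m_i\sim 1/t$) for $i\le j^\ast$ and $\alpha_i=0$ (so $m_i\sim 1$) for $i>j^\ast$; since large $m_i$ dominate small ones, the ordering constraint at the interface $m_{j^\ast}>m_{j^\ast+1}$ is automatic and the sum factors, to leading order, as a product of two blocks. In the \emph{small} block $i>j^\ast$, replacing $g_{l_i}(q^{m_i})$ by $l_i!(tm_i)^{-l_i-1}$ gives $t^{-\sum_{i>j^\ast}(l_i+1)}\prod_{i>j^\ast}l_i!\cdot\zeta(l_{j^\ast+1}+1-d_{j^\ast+1},\dots,l_r+1-d_r)$, a multiple zeta value of weight $\le\sum_{i>j^\ast}(l_i+1)$. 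In the \emph{large} block $i\le j^\ast$, the substitution $x_i=tm_i$ turns the sum into a Riemann sum for $t^{-\sum_{i\le j^\ast}(d_i+1)}\int_{x_1>\dots>x_{j^\ast}>0}\prod_i x_i^{d_i}g_{l_i}(e^{-x_i})\,dx_i$; expanding $g_{l_i}(e^{-x_i})=\sum_{\rho}\rho^{l_i}e^{-\rho x_i}$ and integrating over the simplex produces a $\Q$-linear combination of multiple zeta values of weight $\le\sum_{i\le j^\ast}(d_i+1)$. Crucially, convergence of the outermost term of the small block ($d_{j^\ast+1}<l_{j^\ast+1}$, i.e.\ admissibility) and of the simplex integral ($d_{j^\ast}>l_{j^\ast}$) are exactly the \emph{strict} inequalities expressing that $j^\ast$ beats $j^\ast\pm 1$, so uniqueness of $j^\ast$ is precisely what prevents a logarithmic divergence (and hence a merely regularized value). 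Finally, since $\mzv$ is a $\Q$-algebra filtered by weight, the product of the two blocks lies in $\mzv_{\le\degree(f)}$, giving $\Zdegree(f)\in\mzv_{\le\degree(f)}$.

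\medskip
\noindent\textbf{Main obstacle.} The delicate point is the rigorous justification that only the corner $j^\ast$ contributes at leading order: interchanging the infinite sum with the $q\to 1$ limit, bounding all other (including fractional) scaling regimes by a strictly smaller power of $t$, and controlling the block interface. The uniqueness of $j^\ast$ supplies a strict gap between the leading exponent and the next vertex value of the linear program, which is what makes a single term survive; turning this gap into honest error estimates through a uniform Euler--Maclaurin/Mellin--Barnes analysis is the technical heart of the argument.
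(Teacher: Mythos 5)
Your overall architecture matches the paper's: split the index set at the maximizing $j^\ast$, recognize that the ``small'' block contributes $\zeta(l_{j^\ast+1}+1-d_{j^\ast+1},\ldots,l_r+1-d_r)$ (an MZV with possibly non-positive entries, convergent precisely because of the strict inequalities coming from uniqueness), that the ``large'' block contributes a weight-$\sum_{i\le j^\ast}(d_i+1)$ combination of MZVs, and that $\Zdegree(f)$ is the product of the two. Your Step~1 is exactly the paper's Proposition~\ref{prop:ubracketofs} specialized to this $f$. Where you genuinely diverge: for the degree you run a linear program over scaling exponents $(\alpha_i,\beta_i)$, whereas the paper gets the upper bound from two elementary comparison estimates (Lemma~\ref{lem:liminq}: drop the inequalities $m_1>\cdots>m_r$ to reduce to depth one, and the estimate $m_{i+1}^{d+1}\le m_{i+1}^{d}m_i$) and the lower bound from the factorization itself; and for the large block you pass to a simplex integral via a Riemann sum, whereas the paper reverses the roles of $m$ and $\rho$ by summing over the conjugate coordinates, landing directly on the nested sums defining the conjugated values $\xi$ (Definition~\ref{def:conjzeta}). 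Your LP cleanly explains \emph{why} the answer is a max over threshold vectors, which the paper's presentation somewhat obscures; the paper's comparison lemmas have the advantage of being immediately rigorous.

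The genuine gap is the sentence ``the ordering constraint at the interface $m_{j^\ast}>m_{j^\ast+1}$ is automatic and the sum factors, to leading order, as a product of two blocks.'' This is the theorem's actual content and cannot be waved through: the full sum is \emph{not} the product of the two block sums, and one must show the discrepancy is of strictly smaller degree. The paper's device is to form $f-g\ost h$ with $g=\ms{l_1+1,\ldots,l_{j^\ast}+1}{d_1,\ldots,d_{j^\ast}}$ and $h=\ms{l_{j^\ast+1}+1,\ldots,l_r+1}{d_{j^\ast+1},\ldots,d_r}$, expand this difference in the basis of Corollary~\ref{thm:basis} using the quasi-shuffle structure, and beat each resulting term (columns transported across the interface, and contracted columns of lower depth) with the comparison estimates of Lemma~\ref{lem:liminq}; uniqueness of $j^\ast$ is what makes every such term drop in degree. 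Your Euler--Maclaurin/Mellin plan could in principle substitute for this, but as written nothing controls the interface. Two smaller points: convergence of the small block requires \emph{all} partial-sum conditions $\sum_{i=j^\ast+1}^{t'}(l_i+1-d_i)>0$, not just the outermost one $d_{j^\ast+1}<l_{j^\ast+1}$ (e.g.\ $\sum_{m_1>m_2}m_2/m_1^2$ diverges despite an admissible first entry) --- these all do follow from $j^\ast$ beating every $j>j^\ast$, so state that; and the degree formula \eqref{eq:deg} is asserted for all $f$, including the non-unique case, where the paper needs a separate lower-bound argument (the auxiliary word $f'$ and Lemma~\ref{lem:liminq}) that your sketch does not address.
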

%In case the maximum is not unique, we are in a situation for which $\Zdegree(f)$ diverges, as (formally) it involves non-admissible multiple zeta values.

% The case $k_1=\ldots=k_t=d_{t+1}=\ldots=d_r=0$ is particularly interesting. Namely, in this case we have the following expression
% \[ \Zdegree(f) = \xi(d_1,\dots,d_t) \cdot \zeta(k_{t+1}, \dots , k_r),\]
We will see that the regularized limit of polynomial functions on partitions always gives regularized multiple zeta values. These limits will be given by \emph{bi-multiple zeta values} $\zebi{k_1,\dots,k_r}{d_1,\dots,d_r} \in \R[T]$, which are defined for $k_1,\dots,k_r\geq 1$, $d_1,\dots,d_r\geq 0$ in Definition~\ref{def:bimzv}, and which generalize (harmonic regularized) multiple zeta values in the sense that for $k_1\geq 2,k_2,\dots,k_r\geq 1$
\begin{align*}
    \zebi{k_1,\dots,k_r}{0,\dots,0} \= \zeta(k_1,\dots,k_r)\,.
\end{align*}
The other special case of $d_1,\dots,d_{r-1}\geq 0, d_r\geq 1$ and $k_1=\dots=k_r=1$ is given by 
\begin{align*}
    \zebi{1,\dots,1}{d_1,\dots,d_r} \= \xi(d_1,\dots,d_r)\,,
\end{align*}
where we define the \emph{conjugated zeta values}~$\xi$ as follows:
\begin{definition}\label{def:conjzeta}
For $d_1,\dots,d_{r-1}\geq 0, d_r\geq 1$, define the \emph{conjugated multiple zeta value} by
\[ \xi(d_1,\ldots,d_r) \defis \sum_{0<m_1<\ldots<m_r} \frac{1}{m_1\cdots m_r}\, \Omega\biggl[\prod_{i=1}^r \Bigl(\frac{1}{m_i}+\ldots+\frac{1}{m_r}\Bigr)^{d_i}\biggr], \]
where $\Omega:\Q[m_1^{-1},\ldots,m_r^{-1}]\to \Q[m_1^{-1},\ldots,m_r^{-1}]$ is the linear mapping
\begin{align}\label{def:omega}
\Omega\Bigl[\frac{1}{m_1^{l_1}\cdots m_r^{l_r}}\Bigr] \defis \frac{l_1!\cdots l_r!}{m_1^{l_1}\cdots m_r^{l_r}} \,.
\end{align}
\end{definition}
By definition it is clear that the conjugated zeta values can be written as linear combinations of multiple zeta values and as a result of Theorem \ref{thm:bimzv} their product can be expressed by the index-shuffle product formula, e.g., $\xi(d_1)\xi(d_2) =\xi(d_1,d_2) + \xi(d_2,d_1)$ for $d_1,d_2\geq 1$. In general, the bi-multiple zeta values will be given by sums of products of multiple zeta values and their conjugated analogues $\xi$, e.g., we have (for $k_1\geq 2$, $d_m\geq 1$)
\begin{align*}
    \zebi{1,\dots,1,k_1,\dots,k_r}{d_1,\dots,d_m,0,\dots,0} \= \xi(d_1,\ldots,d_m)\, \zeta(k_1,\dots,k_r)\,.
\end{align*}
The product of the bi-multiple zeta values can be expressed by a generalization of the usual harmonic product, e.g.\ we have 
\begin{align*}
    \zebi{k_1}{d_1}\zebi{k_2}{d_2}\=\zebi{k_1,k_2}{d_1,d_2}+\zebi{k_2,k_1}{d_2,d_1}+\zebi{k_1+k_2}{d_1+d_2}\,.
\end{align*}
Further, we will see in Theorem~\ref{thm:bimzv} that the bi-multiple zeta values are invariant under a certain involution $\iota_\mathrm{s}$, which will appear naturally when considering functions on partitions. In particular, these give a realization of the so-called formal double Eisenstein space introduced in \cite{BKM}.

% \subsection*{Application: relations between MZVs} \hen{maybe just do not make this an extra section here but just a paragraph?}

In Section~\ref{sec:algsetup}, we will introduce an algebraic setup for the elements in $\MS$, which can be seen as a generalization of the classical algebraic setup for multiple zeta values and their double shuffle relations (cf., \cite{H, IKZ}). Starting with a linear combination of multiple zeta values which evaluates to an element in $\Q[\zeta(2)]$, we then show, with diagram \eqref{fig:overview} in mind, how to `lift' these to obtain new families of functions on partitions with quasimodular $q$-bracket (see Proposition~\ref{prop:pkkkisquasimodular} and Example~\ref{ex:pi}). In Section~\ref{sec:mzv}, we will show that not only the analogue of the double shuffle relation give relations among ($q$-analogues of) multiple zeta values, but also how families of polynomial functions on partitions with quasimodular $q$-bracket can be used to obtain relations. For example, we will see that the shifted symmetric functions together with the Bloch--Okounkov theorem implies a special case of the Ohno--Zagier relations and that the arm-leg moments of Zagier (\cite{Z}) imply the sum formula for multiple zeta values.

% The map~$\Psi$ in the definition of $\MS$ is injective. Hence, in contrast to the space of ($q$-analogues of) multiple zeta values, there are no relations between a set of generators of $\MS$. Surprisingly, however, this makes the space~$\MS$ even more suited to study the relations in $\mzv_q$ and $\mzv$, as we explain now. 

% \jw{summarize algebraic structure of $\MS$ and how relations can be obtained from this construction}

%\subsection*{Contents}

\subsection*{Acknowledgements} This project was partially supported by JSPS KAKENHI Grants 19K14499 and 21K13771. Part of the work has been carried out during a visit of the second author at Nagoya, and he would like to thank Nagoya University for hospitality and support. He also thanks Utrecht University, where he carried out most of this work, and the Max Planck Institute for Mathematics, where he finished this work. 
We would like to thank the referee for helpful comments and corrections and Nobuo Sato and Yuta Suzuki for the idea of the proof of Proposition~\ref{prop:review} and Remark~\ref{rem:inequcounterex}.

\section{Functions on partitions}
%\subsection{Partitions}
%\jw{Set notation for strict partitions, conjugation, M\"obius function, depth, exponential notation}

\subsection{Multiplication, conjugation, brackets and derivations}\label{sec:operations}
Denote by~$\PP$ the set of all partitions of integers. We make use of the following equivalent definitions of partitions:
\begin{enumerate}[(i), leftmargin=*]
    \item Finite non-increasing sequences of positive integers $(\lambda_1,\lambda_2, \ldots, \lambda_\ell)$, where we write~$\ell(\lambda)$ for the \emph{length}~$\ell$ of the partition~$\lambda$ and $|\lambda|=\lambda_1+\ldots+\lambda_\ell$ for the \emph{size}.
    \item Infinite sequences $(\lambda_1, \lambda_2,\dots)$ with $\lambda_1 \geq \lambda_2 \geq \dots$ and $\lambda_j = 0$ for all but finitely many~$j$;
    \item (\emph{Stanley's multi-rectangular coordinates}) Two sequences $\bf{r}$ and $\bf m$ of non-negative integers, of the same length~$d$, and of which $\bf m$ is strictly decreasing. These sequences correspond to the partition
    \[{\bf r} \times {\bf m} = (\underbrace{m_1,\ldots,m_1}_{r_1},\ldots,\underbrace{m_d,\ldots,m_d}_{r_d})\]
    in the first definition. This representation is unique if the elements of~$\bf r$ and~$\bf m$ are positive. Often, given $\lambda\in\PP$, we write ${\bf r}(\lambda)=(r_1,\dots,r_d)$ and ${\bf m}(\lambda)=(m_1,\dots,m_d)$ for such sequences. The integer~$d$ is called the \emph{depth} of the partition.
    \item Multisets of integers, in which the integer~$m$ has
    \[r_m(\lambda) = \# \{ j \mid \lambda_j = m \}\,\]
    appearances.
\end{enumerate}
\begin{example}
The smallest partition is the empty partition (of the integer~$0$), written as $(), (0,0,\ldots), ()\times(), \emptyset$ respectively. The Stanley coordinates of the partition $\lambda=(6,4,4,3,2,2,2,1)$ can be read of by the decomposition $\lambda=(1,2,1,3,1)\times(6,4,3,2,1)$.
\end{example}

%An element $\lambda \in \PP$ can be written as $\lambda = (\lambda_1, \lambda_2,\dots)$ with $\lambda_1 \geq \lambda_2 \geq \dots$ and $\lambda_j = 0$ for all but finitely many $j$. Write $\ell(\lambda)$ for the number of (non-zero) parts of $\lambda$, i.e., $\ell(\lambda) = \#\{j|\lambda_j \neq 0\}$. For an $m\in \Z_{\geq 1}$ we denote by $r_m(\lambda)$ the number of parts of $\lambda$ being equal to $m$, i.e.
There are two  elementary operations on the space of functions ${f,g\in \q^\PP}$:
\begin{enumerate}[(i),leftmargin=*]
\item \emph{pointwise multiplication}, i.e.\ the multiplication $(f \odot g)(\lambda)=f(\lambda)\,g(\lambda)$ induced by the multiplication on $\q$,
\item \emph{conjugation}, i.e.\ $\omega(f)(\lambda) = f(\overline\lambda)$, where $\overline\lambda$ denotes the transpose of the partition~$\lambda$. 
\end{enumerate}
We adapt both operations to make them equivariant with respect to the $q$-bracket~\eqref{eq:qbrac}. In order to do so, we introduce the $\vec{u}$-bracket \cite[Definition 3.2.1]{vI}.
\begin{definition}\label{defn:ubrac}
The vector space isomorphism $\langle \ \rangle_{\vec{u}}:\q^\PP \to \q\llbracket u_1,u_2,\ldots\rrbracket$ is given by
\[\langle f \rangle_{\vec{u}} \defis \frac{ \sum_{\lambda \in \PP} f(\lambda)\, u_{\lambda}}{\sum_{\lambda \in \PP}  u_{\lambda}}\qquad (u_\lambda = u_{\lambda_1}\,u_{\lambda_2}\cdots,\ u_0=1).\]
For $f\in \q^\PP$ we call $\langle f\rangle_{\vec{u}}$ the \emph{$\vec{u}$-bracket} of $f$. For all $\lambda \in \PP$ we write $a_\lambda(f)$ to denote the coefficient of $u_\lambda$ in $\langle f \rangle_{\vec{u}}\mspace{1mu}$, i.e., $\langle f \rangle_{\vec{u}} = \sum_{\lambda \in \PP} a_\lambda(f) \, u_{\lambda}\mspace{1mu}$. 
%We denote by $\Phi$ the inverse of the $\vec{u}$-bracket, i.e.,$\Phi\langle f \rangle_{\vec{u}}=f$. 
\end{definition}
Note that the~$\vec{u}$-bracket reduces to the~$q$-bracket~\eqref{eq:qbrac} by specializing $u_i=q^i$ for all integers~$i$. 

The $\vec{u}$-bracket is \emph{not} an algebra homomorphism with respect to the pointwise product on $\q^\PP$. Therefore, we introduce the \emph{harmonic product} on $\q^\PP$, making the $\vec{u}$-bracket into an algebra homomorphism. Even so, we introduce a conjugation~$\iota$ making the $\vec{u}$-bracket equivariant.
%different from~$\omega$ for which $\langle \iota(f)\rangle_{\vec{u}}$ corresponds to taking transposing partitions in $\langle f \rangle_{\vec{u}}\mspace{1mu}$.
\begin{definition}\label{def:products}
Given $F,G\in \q\llbracket u_1,u_2,\ldots\rrbracket$, we define
\begin{enumerate}[(i),leftmargin=*]\itemsep3pt
\item the \emph{harmonic product} as the multiplication $F\ost G = FG$, where $FG$ denotes the standard product of $F$ and $G$ in $\q\llbracket u_1,u_2,\ldots\rrbracket$,
\item the \emph{conjugation} by $\iota(F) = \sum_{\lambda \in \PP} a_\lambda u_{\overline\lambda}$\ , where $F=\sum_{\lambda \in \PP} a_\lambda u_\lambda$ with $a_\lambda \in \q$ and where $\overline\lambda$ denotes the transpose of the partition~$\lambda$,
\item the \emph{shuffle product} as the multiplication $F\osh G = \iota(\iota(F)\ost \iota(G))$,
\item the \emph{derivative} of $F=\sum_{\lambda \in \PP} a_\lambda u_\lambda$ by $DF=\sum_{\lambda \in \PP} a_\lambda |\lambda| \,u_\lambda\mspace{1mu}$.
\end{enumerate}
We extend these definitions to $\q^\PP$ by the isomorphism given by the $\vec{u}$-bracket, i.e.\ for $f,g \in \q^\PP$ we define 
\begin{enumerate}[(i),leftmargin=*]\itemsep3pt
\item the \emph{harmonic product} by $\langle f \ost g\rangle_{\vec{u}} =\langle f\rangle_{\vec{u}} \langle g\rangle_{\vec{u}}\mspace{1mu}$,
\item the \emph{conjugation} by $\langle\iota(f)\rangle_{\vec{u}} = \iota\langle f \rangle_{\vec{u}}\mspace{1mu}$,
\item the \emph{shuffle product} as the multiplication $\langle f\osh g \rangle_{\vec{u}} = \langle f \rangle_{\vec{u}} \osh \langle g \rangle_{\vec{u}}\mspace{1mu}$,
\item the \emph{derivative} of $f$ by $\langle Df\rangle_{\vec{u}} = D\langle f \rangle_{\vec{u}}\mspace{1mu}$.
\end{enumerate}
\end{definition}
\begin{remark} %For $f,g \in \q^\PP$ we have $\langle f \ost g\rangle_{\vec{u}} =\langle f\rangle_{\vec{u}} \langle g\rangle_{\vec{u}}\,$.
In \cite{vI} the harmonic product was called the \emph{induced product}, as it is induced from the product on $\q\llbracket u_1,u_2,\ldots\rrbracket$. In the context of the present work, the name \emph{harmonic product} is more appropriate, as it will be indicated in Example~\ref{eq:dshanalog}.
\end{remark}

\begin{proposition}\label{prop:doubleshuffle} For all $f,g\in \q^\PP$
\[ \langle \iota(f) \rangle_q = \langle f \rangle_q \,,\quad  \langle f \ost g \rangle_q = \langle f\rangle_q \,\langle g\rangle_q = \langle f  \osh g \rangle_q \quad \text{and} \quad q\frac{\partial}{\partial q}\langle f \rangle_q = \langle Df\rangle_q\,.\]
\end{proposition}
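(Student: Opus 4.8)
The plan is to reduce all three identities to a single principle: as observed just after Definition~\ref{defn:ubrac}, the $q$-bracket is the composite of the $\vec{u}$-bracket with the substitution homomorphism $\mathrm{sp}\colon \q\llbracket u_1,u_2,\ldots\rrbracket\to\q\llbracket q\rrbracket$ determined by $u_i\mapsto q^i$, so that $u_\lambda\mapsto q^{|\lambda|}$ and $\mathrm{sp}\langle f\rangle_{\vec{u}}=\langle f\rangle_q$. Since the operations $\iota,\ost,\osh,D$ on $\q^\PP$ were all \emph{defined} through the $\vec{u}$-bracket (Definition~\ref{def:products}), it suffices to understand how $\mathrm{sp}$ interacts with the corresponding operations on power series, and then apply $\mathrm{sp}$ to the defining identities.

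Two elementary observations about $\mathrm{sp}$ drive everything. First, $\mathrm{sp}$ is a ring homomorphism, being a substitution of formal power series. Second, $\mathrm{sp}\circ\iota=\mathrm{sp}$: since $\iota$ replaces each $u_\lambda$ by $u_{\overline\lambda}$ and conjugation of partitions preserves size, $|\overline\lambda|=|\lambda|$, both monomials specialize to $q^{|\lambda|}$. Granting these, the first identity $\langle\iota(f)\rangle_q=\langle f\rangle_q$ is immediate upon applying $\mathrm{sp}\circ\iota=\mathrm{sp}$ to $\langle f\rangle_{\vec{u}}$. The harmonic identity $\langle f\ost g\rangle_q=\langle f\rangle_q\langle g\rangle_q$ follows from $\langle f\ost g\rangle_{\vec{u}}=\langle f\rangle_{\vec{u}}\langle g\rangle_{\vec{u}}$ together with multiplicativity of $\mathrm{sp}$. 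For the shuffle I would unfold $\langle f\osh g\rangle_{\vec{u}}=\iota\bigl(\iota\langle f\rangle_{\vec{u}}\ost\iota\langle g\rangle_{\vec{u}}\bigr)$ and apply $\mathrm{sp}$: the outer $\iota$ is absorbed by $\mathrm{sp}\circ\iota=\mathrm{sp}$, multiplicativity then splits the $\ost$, and two further applications of $\mathrm{sp}\circ\iota=\mathrm{sp}$ remove the inner conjugations, leaving $\langle f\rangle_q\langle g\rangle_q$.

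For the derivative, the key is that $D$ multiplies the coefficient of $u_\lambda$ by $|\lambda|$, whereas $q\frac{\partial}{\partial q}$ multiplies the coefficient of $q^n$ by $n$; since $u_\lambda$ specializes to $q^{|\lambda|}$, these operators intertwine along $\mathrm{sp}$, that is, $\mathrm{sp}\circ D=q\frac{\partial}{\partial q}\circ\mathrm{sp}$. Applying this to $\langle f\rangle_{\vec{u}}$ and invoking $\langle Df\rangle_{\vec{u}}=D\langle f\rangle_{\vec{u}}$ gives $q\frac{\partial}{\partial q}\langle f\rangle_q=\langle Df\rangle_q$.

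The computations are routine; the only genuine (if trivial) input is the size-preservation of conjugation, $|\overline\lambda|=|\lambda|$, and this is precisely where the collapse from the infinitely many variables $u_i$ to the single variable $q$ does its work: the $\vec{u}$-bracket distinguishes $\lambda$ from $\overline\lambda$, but the $q$-bracket cannot, so that conjugation-equivariance and shuffle-compatibility, which hold only up to $\iota$ at the $\vec{u}$-level, become exact equalities after specialization. The one point requiring care is the bookkeeping of the nested conjugations in the shuffle product, so that each occurrence of $\iota$ is correctly matched against the identity $\mathrm{sp}\circ\iota=\mathrm{sp}$.
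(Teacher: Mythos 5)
Your argument is correct and follows essentially the same route as the paper's proof: the key point in both is that the $q$-bracket is the specialization $u_i\mapsto q^i$ of the $\vec{u}$-bracket, that this specialization is a ring homomorphism, and that it absorbs $\iota$ because $|\overline\lambda|=|\lambda|$. You merely make explicit (via the homomorphism $\mathrm{sp}$ and the intertwining $\mathrm{sp}\circ D=q\frac{\partial}{\partial q}\circ\mathrm{sp}$) what the paper states more tersely.
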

\begin{proof}
The first equality follows directly by noting that a partition~$\lambda$ and its conjugate~$\overline{\lambda}$ are of the same size, so that substituting $q^i$ for $u_i$ has the same effect on $u_\lambda$ and $u_{\overline{\lambda}}\mspace{1mu}$. By definition of the shuffle product, the second identity follows from the first. The last equality follows directly by noting that $\langle f \rangle_{\vec{u}}|_{u_i=q^i}=\langle f \rangle_q\mspace{1mu}$.
\end{proof}
The harmonic product, conjugation, shuffle product and derivative can be given by explicit formulas on the level of functions in $\q^\PP$. For this, recall that a \emph{strict partition} is a partition for which all (non-zero) parts are distinct. Let the M\"obius function $\mu:\PP\to \{\pm 1\}$ be given by
\[ \mu(\lambda) = \begin{cases} (-1)^{\ell(\lambda)} & \lambda \text{ is strict} \\ 0 & \text{else.}\end{cases} \]
Denote the convolution product of $f,g\in \q^\PP$ by $\star$ (we reserve the symbol~$*$ for the harmonic product), i.e.,
\[ (f \star g)(\lambda) = \sum_{\alpha \cup \beta=\lambda} f(\alpha)\,g(\beta),\]
where in the summation we take the union of partitions considered as multisets. Write~$1:\PP\to \q$ for the inverse of the M\"obius  function under convolution, i.e.\ $1(\lambda)=1$. 
Then, by a direct computation, for all $f,g\in \q^\PP$, we have (see also \cite[Proposition~3.2.3]{vI})
\begin{align}
 \label{eq:harmonic}  f\ost g &\= f\star g\star \mu% \sum_{\alpha\cup\beta\cup\gamma=\lambda} f(\alpha)\,g(\beta)\,\mu(\gamma)
\\ \iota(f)&\= \omega(f\star \mu)\star 1%=\sum_{(\overline{\alpha\cup\delta})\cup\gamma=\lambda} f(\alpha)\,\mu(\delta), 
\\ f\osh g &\= \omega(\omega(f\star \mu)\star \omega(g\star \mu))\star 1,%= \sum_{\overline{\overline{(\alpha\cup\delta})\cup\overline{(\beta\cup\epsilon})}\cup\gamma=\lambda} f(\alpha)\,g(\beta)\,\mu(\delta)\,\mu(\epsilon)
 \end{align}
where we recall $\omega(f)(\lambda)=f(\overline{\lambda})$. 
Also, by \cite[Proposition~5.1.1]{vI} for all $f\in \q^\PP$ we have
\[ Df(\lambda)\= f(\lambda)|\lambda| - (f\ost |\cdot|)(\lambda).%= fS_2 - f\star S_2\star \mu
\]

Two subspaces of $\q^\PP$ are particularly well behaved with respect to these operations. 
\begin{definition}\label{def:HandJ}  We define  
\begin{align*}
\mathscr{H}&\defis\{f\in \q^\PP \mid f(\lambda)=f(\rho) \text{ if } {\bf r}(\lambda) = {\bf r}(\rho) \text{ for all } \lambda,\rho \in \PP \} \\
\mathscr{J}&\defis\{f\in \q^\PP \mid (f \star  \mu)(\lambda)=(f \star  \mu)(\rho) \text{ if } {\bf m}(\lambda) = {\bf m}(\rho) \text{ for all } \lambda,\rho \in \PP \},
\end{align*}
where ${\bf r}(\lambda)$ and ${\bf m}(\lambda)$ are the Stanley coordinates for~$\lambda$. 
\end{definition}
For example, the M\"obius function~$\mu$ is in $\mathscr{H}$, and for any $m\geq 1$ the function $\lambda \mapsto r_m(\lambda)$ is an element of $\mathscr{J}$. 
\begin{lemma}\label{lem:handjclosed}
The space $\mathscr{H}$ is closed under the harmonic product~$\ost$ and $\mathscr{J}$ is closed under the shuffle product~$\oshh$. In fact the spaces are conjugate, i.e.\ $\iota\mathscr{H}=\mathscr{J}$.
\end{lemma}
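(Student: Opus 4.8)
The plan is to derive all three assertions from two facts: that $\mathscr{H}$ is a subalgebra of $(\q^\PP,\star)$ containing $\mu$ and $1$, and that transposition $\omega$ interchanges dependence on the Stanley coordinate $\mathbf{r}$ with dependence on $\mathbf{m}$. Once these are in place, the explicit formulas \eqref{eq:harmonic} and the definition of $\iota$ and $\osh$ turn the statement into a short chain of implications.

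First I would record the transpose duality of Stanley coordinates. Reading off the columns of the Young diagram of $\lambda=\mathbf{r}(\lambda)\times\mathbf{m}(\lambda)$, with $\mathbf{m}(\lambda)=(m_1>\cdots>m_d)$ and $\mathbf{r}(\lambda)=(r_1,\ldots,r_d)$, shows that $\overline\lambda$ has distinct parts $r_1<r_1+r_2<\cdots<r_1+\cdots+r_d$ occurring with multiplicities $m_1-m_2,\,m_2-m_3,\,\ldots,\,m_{d-1}-m_d,\,m_d$. Hence $\mathbf{m}(\overline\lambda)$ is the sequence of partial sums of $\mathbf{r}(\lambda)$ and $\mathbf{r}(\overline\lambda)$ is the sequence of successive differences of $\mathbf{m}(\lambda)$. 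In particular, if a function $h$ depends only on $\mathbf{r}$ then $\omega(h)$ depends only on $\mathbf{m}$, and conversely; since $\overline{\overline\lambda}=\lambda$, these two assignments are mutually inverse. Next I would check that $\mathscr{H}$ is closed under $\star$: for $f,g\in\mathscr{H}$, a decomposition $\alpha\cup\beta=\lambda$ is exactly a choice of integers $0\le s_i\le r_i$ recording how many copies of the $i$-th distinct part go to $\alpha$, and since $f(\alpha)$ and $g(\beta)$ depend only on the positive entries among $(s_i)$ and $(r_i-s_i)$ and never on the part sizes $m_i$, the value $(f\star g)(\lambda)$ depends only on $\mathbf{r}(\lambda)$. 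As $\mu,1\in\mathscr{H}$, this makes $\mathscr{H}$ a $\star$-subalgebra, and the identity $f\ost g=f\star g\star\mu$ of \eqref{eq:harmonic} shows at once that $\mathscr{H}$ is closed under $\ost$, which is the first assertion.

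For the conjugacy $\iota\mathscr{H}=\mathscr{J}$ I would start from $\iota(f)=\omega(f\star\mu)\star 1$ and use that $1$ is the convolution inverse of $\mu$ to obtain $\iota(f)\star\mu=\omega(f\star\mu)$. If $f\in\mathscr{H}$, then $f\star\mu\in\mathscr{H}$ depends only on $\mathbf{r}$, so by the duality $\omega(f\star\mu)$ depends only on $\mathbf{m}$; thus $\iota(f)\star\mu$ depends only on $\mathbf{m}$, i.e.\ $\iota(f)\in\mathscr{J}$, giving $\iota\mathscr{H}\subseteq\mathscr{J}$. Since $\iota$ is an involution, it remains to verify $\iota\mathscr{J}\subseteq\mathscr{H}$: for $g\in\mathscr{J}$ the function $g\star\mu$ depends only on $\mathbf{m}$, so $\omega(g\star\mu)$ depends only on $\mathbf{r}$ and lies in $\mathscr{H}$, whence $\iota(g)=\omega(g\star\mu)\star 1\in\mathscr{H}$ because $\mathscr{H}$ is closed under $\star$.

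Finally, the closure of $\mathscr{J}$ under $\osh$ is purely formal: from $f\osh g=\iota(\iota(f)\ost\iota(g))$ and the now-established $\iota\mathscr{J}=\mathscr{H}$, any $f,g\in\mathscr{J}$ give $\iota(f),\iota(g)\in\mathscr{H}$, whose $\ost$-product lies in $\mathscr{H}$, so applying $\iota$ returns an element of $\iota\mathscr{H}=\mathscr{J}$. I expect the only genuine work to be the transpose-duality bookkeeping and the verification that $\mathscr{H}$ is a $\star$-subalgebra; the main obstacle is simply keeping straight which coordinate ($\mathbf{r}$ or $\mathbf{m}$) each of $f$, $f\star\mu$, and $\omega(f\star\mu)$ depends on, after which the explicit formulas do all the rest.
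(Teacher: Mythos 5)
Your proof is correct and follows essentially the same route as the paper's: the sub-multiset bijection showing that $(f\star g)(\lambda)$ depends only on $\mathbf{r}(\lambda)$ for $f,g\in\mathscr{H}$, the transpose duality of Stanley coordinates, and the identity $\iota(f)\star\mu=\omega(f\star\mu)$ (the paper phrases this via the coefficients $a_\lambda(f)=(f\star\mu)(\lambda)$ of the $\vec{u}$-bracket, which is the same statement). The only cosmetic difference is that you spell out both inclusions $\iota\mathscr{H}\subseteq\mathscr{J}$ and $\iota\mathscr{J}\subseteq\mathscr{H}$ and the formal deduction of $\oshh$-closure of $\mathscr{J}$, which the paper leaves implicit.
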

\begin{proof}
Suppose $\lambda = {\bf r \times m}$ and $\rho = {\bf r \times s}$ for some sequences $\bf r, m, s$. Given $\alpha\subset \lambda$ (where we consider $\alpha$ and $\lambda$ to be multisets), there is some sequence $\bf a$ such that $\alpha = {\bf a \times m}$. Then, ${\bf a \times s}\subset \rho$. This gives a bijection between subsets of~$\lambda$ and of~$\rho$. Moreover, if $f\in \mathscr{H}$, then $f({\bf a \times m})=f({\bf a \times s})$. Hence, by the expression in Eq.~\eqref{eq:harmonic}, we conclude that $\mathscr{H}$ is closed under the harmonic product. 

Next, we show that $\mathscr{H}$ and $\mathscr{J}$ are conjugate under~$\iota$. By the same argument as above, if $f\in \mathscr{H}$, then also $f\star\mu$. Note that $a_\lambda(f) = (f\star\mu)(\lambda)$, where $a_\lambda(f)$ is defined by Definition~\ref{defn:ubrac}.
Hence, if $f\in \mathscr{H}$ or $f\in \mathscr{J}$, then $a_\lambda(f)$ and $a_\rho(f)$ agree whenever ${\bf r}(\lambda) = {\bf r}(\rho)$ or ${\bf m}(\lambda) = {\bf m}(\rho)$ respectively. The statement now follows from the definition of $\iota$ using the fact that the conjugate of $(r_1,\ldots, r_d)\times (m_1,\ldots, m_d)$ is given by $(m_1, m_{2}-m_1,\ldots,m_d-m_{d-1})\times (r_1+\ldots+r_d,\ldots,r_1)$.
\end{proof}
\begin{remark}
A more explicit definition of the space $\mathscr{J}$ is as follows. Given $\lambda\in \PP$  of depth~$d$, write $\lambda={\bf r}\times {\bf m}$  and for $I\subset [d]:=\{1,\ldots,d\}$, let ${\bf m}_I$ be the strict partition with parts $m_i$ for $i\in I$. Then $f\in \mathscr{J}$ precisely if $f(\lambda)$ is determined by the value of~$f$ on all strict partitions contained in~$\lambda$ in the following way: 
\[ f(\lambda) \= \sum_{I\cup J=[d]}(-1)^{|J|} \prod_{i\in I} r_i \prod_{j \in J}(r_j-1) \, f({\bf m}_I).\]
\end{remark}

\subsection{Degree and limits} 
%To every $f\in \q^\PP$ we associate a degree. In the next section we introduce polynomial functions on partitions, in which case the degree corresponds to the degree of the corresponding polynomials. Moreover, in that case the corresponding limit converges to a multiple zeta value. 
Recall from~\eqref{eq:qbrac} and \eqref{eq:degree} the definition of the $q$-bracket, degree $\deg(f)$ and limit $\Zdegree(f)$ of a function~$f$ on partitions.
% To power series $F\in \q\llbracket q \rrbracket$ we associate a degree and a corresponding limit, in such a way that if $F=\langle f\rangle_q$ for some $f:\PP\to \q$, then $\deg(F)=\deg\langle f \rangle_q$ and $\Zdegree(F)=\Zdegree(f)$ (see Eq.~\eqref{eq:degree}). 
% \begin{definition} Define the \emph{degree} $\degree(F)$ of a power series $F\in\q\llbracket q \rrbracket$ to be \[\degree(F) = \inf_{a\in \R}\{\lim_{q\to 1}(1-q)^a F(q) \text{ converges}\}.\] Also, whenever defined let $\Zdegree\in \mathbb{R}\cup\{\pm\infty\}$ be the value of the corresponding limit, i.e.\
% \[\Zdegree(F) = \lim_{q\to 1} (1-q)^{\degree(F)}F(q).\]
%Define the degree and degree limit of a function $f:\PP\to \q$ to be the degree and degree limit of $\langle f \rangle_q\,.$ 
%\end{definition}
\begin{example}\label{ex:limit} Let us consider several examples of degree limits of interesting functions on partitions:
\begin{enumerate}[(i), leftmargin=*]
\item Let $d(\lambda)$ equal the number of different parts of $\lambda$, i.e., $d$ is the depth of $\lambda$. As
\[\sum_{\lambda \in \PP} d(\lambda) \,q^{|\lambda|} \= \frac{q}{1-q} \Bigl(\sum_{\lambda \in \PP} q^{|\lambda|}\Bigr),\]
we have $\langle d \rangle_q = \frac{q}{1-q}$. Hence, $\degree(d)=1$ and $\Zdegree(d)=1$. 
\item Next, consider $f(\lambda) = \ell(\lambda)$. Then $\langle f \rangle_q = \sum_{m\geq 1} \frac{q^m}{1-q^m}$ (see, e.g., \cite[Proposition~3.1.4]{vI}). 
Hence, for all $\epsilon\geq0$ one has \[\lim_{q\to 1}(1-q)^{1+\epsilon}\langle f \rangle_q \= \lim_{q\to 1}(1-q)^\epsilon \sum_{m\geq 1}\frac{(1-q)\,q^m}{1-q^m}.\] Note that $\lim_{q\to 1} \frac{(1-q)\,q^m}{1-q^m} = \frac{1}{m}$, so that $\sum_{m\geq 1}\frac{(1-q)\,q^m}{1-q^m}$ diverges as $q\to 1$ at logarithmic rate (see, also, \cite{P}). Hence, $\lim_{q\to 1}(1-q)^{1+\epsilon}\langle f \rangle_q$ converges to $0$ for $\epsilon>0$ and diverges to $\infty$ for $\epsilon=0$. In other words, the degree of $f$ is~$1$ and $\Zdegree(f)=\infty$. 
\item\label{it:altMZV} Thirdly, let $f(\lambda)$ be equal to the number of even parts in $\lambda$ minus the number of odd parts. That is, $f(\lambda) = \sum_{m} (-1)^m\, r_m(\lambda)$. Then, (see again \cite[Proposition~3.1.4]{vI})
\[(1-q)\langle f \rangle_q \= \sum_{m\geq 1}(-1)^m\frac{(1-q)q^m}{1-q^m} \,\xrightarrow[q\to 1]{}\,\sum_{m\geq 1} \frac{(-1)^m}{m} \= \log(2). \]
Hence, in this example, the corresponding degree limit, conjecturally, is not a multiple zeta value. 
%In Section~\ref{sec:limits} we generalize this example to other functions which are polynomial in the Stanley coordinates.
\item The degree may be any real number $x\in \R$. Namely, let \[f(\lambda) \= \sum_{m\in \bf m(\lambda)} (-1)^m\,\binom{x}{m} \= \sum_{m\geq 1} (-1)^m\,\binom{x}{m} \, \delta_{r_m(\lambda)\geq 1}\mspace{1mu}.\]
Then (also by \cite[Proposition~3.1.4]{vI})
\[\langle f \rangle_q \= \sum_{m\geq 0} (-1)^m\,\binom{x}{m}\, q^{m} \= (1-q)^x.\]
\item It may happen that $\deg(f)=-\infty$, as is the case for the M\"obius function:
\[\langle \mu\rangle_q \= \Bigl(\sum_{\lambda \in \PP} q^{|\lambda|}\Bigr)^{\! -2} \= \prod_{m\geq 1} (1-q^m)^2.\]
\item If $f$ is such that $g(\tau)=\langle f \rangle_q$ is a cusp form of weight $k$ (with $q=e^{2\pi i\tau}$), the modular transformation $g(\tau) = \tau^{-k}g\bigl(-\frac{1}{\tau}\bigr)$ implies that
\[ \lim_{q\to 1} (1-q)^k \langle f \rangle_q \= \lim_{\tau \to 0} \frac{(1-e^{2\pi i\tau})^k}{\tau^k} g\Bigl(-\frac{1}{\tau}\Bigr) \= \lim_{q\to 0} (-2\pi i)^k \langle f\rangle_q = 0.\]
This illustrates the fact that if $\langle f\rangle_q$ is a quasimodular form of weight~$k$, the degree of $f$ is $k$ and the limit can be computed using the quasimodular transformation (in fact, for quasimodular forms one can recover the full asymptotic expansion as $q\to 1$ using \emph{growth polynomials}; see \cite[Section~9]{CMZ}).

\end{enumerate}
% More generally, if $\langle f\rangle_q$ is a cusp form, then the degree of $f$ is $-\infty$ (this are examples of cases where the degree is not a very interesting notion.)
% The degree of $\ell(\lambda)$ is 1, with limit $\zeta(1)=\infty$. The degree of $|\lambda|$ is $2$ with limit $\zeta(2)$. The degree of the depth is 1, with limit~$1$. 

% \hen{If $H(n)=\sum_{m=1}^n \frac{(-1)^m}{m}$, then $f(\lambda) = \sum_{m>0} H(r_m(\lambda))$ gives $-\log(2)$ (with degree 1). Maybe one can give a general statement saying that for a q-series $g(q) = \sum_{n>0} a_n q^n$ where $q\rightarrow 1$ exists and is non-zero one can construct a function $f$ on partitions by $f(\lambda) = \sum_{m> 0} \sum_{n=1}^{r_m(\lambda)} a_n$ such that $\deg(f) = 1$ and $\Zdegree(f) = g(1)$ (?) }
\end{example}
\begin{remark}
Given $F=\sum_{n\geq 0} a_n q^n\in \q\llbracket q \rrbracket$ there are infinitely many functions $f:\PP\to \q$ for which $\langle f \rangle_q = F$ (the only condition on~$f$ is that $\sum_{|\lambda|=n} f(\lambda) = \sum_{m\geq 0} a_m \,p(n-m)$ with $p(i)$ the number of partitions of $i$). Hence, one could define the degree and the degree limit of $F$ as the degree and degree limit of~$f$ for one such~$f$. However, in the generality of this section, one does not discover a lot of structure in the values $\Zdegree(F)$. In the next section we see how this situation alters when one restricts to a certain polynomial subspace of $\q^\PP$.
\end{remark}
 
\section{Partition analogues of Multiple Zeta Values}
\subsection{Polynomial functions on partitions}
% \jw{Introduce the functions $S$, provide examples/motivation/qMZV}
% \jw{Give the quasishuffle setup}

The spaces of polynomials and modular forms are graded algebras with the property that  after fixing the degree, or weight and a congruence subgroup respectively, the corresponding vector spaces are finite dimensional.  Similarly, the algebra~$\MS$ %of \emph{polynomial functions on partitions} 
admits a weight filtration such that the vector subspace of elements of a fixed weight is finite dimensional. 

This weight filtration is most naturally introduced after giving an equivalent definition for $\MS$. That is, let $\Phi$ be the composition of the $\vec{u}$-bracket (see Definition~\ref{defn:ubrac}), and the mapping $\Psi$ from Definition~\ref{def:MS}. Then, $\Phi$ denotes the linear map
\[\Phi: \bigoplus_{n\geq 0} \q[x_1,\ldots,x_n,y_1,\ldots,y_n] \to \q\llbracket u_1,u_2,\ldots\rrbracket \]
uniquely determined by
\[ g(x_1,\ldots,x_n,y_1,\ldots,y_n)\mapsto \sum_{\substack{m_1>\ldots>m_n>0\\r_1,\ldots,r_n\geq 1}} g(m_1,\ldots,m_n,r_1,\ldots,r_n)\, u_{m_1}^{r_1}\cdots u_{m_n}^{r_n}\]
(see Proposition~\ref{prop:ubracketofs}).
Now, $\mathrm{Im}\Phi$ is an algebra with respect to the natural product on $\q\llbracket u_1,u_2,\ldots\rrbracket$, and $\Phi$ becomes an algebra homomorphism if one defines a corresponding product on the domain. Even more, this domain admits a weight and a depth filtration, determined by assigning to $g(x_1,\ldots,x_n,y_1,\ldots,y_n)$  weight $\deg g + n$ and depth $n$. The main advantage of $\Phi$ over $\Psi$ is that the natural product on the codomain of $\Phi$ (in contrast to the natural product on the codomain of $\Psi$) behaves well with respect to the $q$-bracket (see Proposition~\ref{prop:doubleshuffle}). This yields the following natural definition for the weight and depth filtration on $\MS$. 
\begin{definition}
We call a function $f\in\q^\PP$ a polynomial function on partitions %if $\langle f \rangle_{\vec{u}}\in \mathrm{Im} \Phi,$ and we say $f$ is
of \emph{weight} $\leq k$ and \emph{depth} $\leq p$ if $\langle f \rangle_{\vec{u}}=\Phi(g)$ for some $g$ of weight $\leq k$ and depth~${\leq p}$ (where we recall that $g(x_1,\ldots,x_n,y_1,\ldots,y_n)$ has weight $\leq \deg g + n$ and depth~${\leq n}$).
\end{definition}
Note that %this weight filtration makes $\MS$ into a filtered algebra, and that there 
the vector space of polynomial functions on partitions of bounded weight is finite dimensional. Recall, we denote the $\Q$-vector space of all polynomial function on partitions by
\begin{align*}
   \MS \= \langle f \in \q^\PP \mid f \text{ is a polynomial function} \rangle_\Q
\end{align*}
and write for the subspace of all polynomial functions of weight $\leq k$
\begin{align*}
    \MS_{\leq k} \defis \langle f \in \MS  \mid f \text{ is of weight} \leq k \rangle_\Q\,.
\end{align*}

In the next section we provide a basis for~$\MS$ (or, in fact, several), which one can easily find because of the following two results. Note that this contrasts the situation for multiple zeta values, as well as for $q$-analogues of multiple zeta value, where so far it has not been possible to prove that a certain generating set actually forms a basis. 
\begin{proposition}\label{prop:inj}
The linear map~$\Phi$ is injective.
\end{proposition}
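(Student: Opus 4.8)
The plan is to exploit the fact that, by the explicit formula for $\Phi$, the coefficient of each monomial in $\langle \Psi(g)\rangle_{\vec{u}} = \Phi(g)$ is literally a value of the polynomial $g$ at an integer point; injectivity then follows once we know these points are Zariski-dense. Since $\Phi$ is linear, it suffices to show that $\Phi(g) = 0$ implies $g = 0$.

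First I would split $g = \sum_{n\geq 0} g_n$ according to its components $g_n \in \q[x_1, \ldots, x_n, y_1, \ldots, y_n]$ in the direct sum, and observe that $\Phi$ respects this decomposition at the level of monomials: a monomial $u_{m_1}^{r_1}\cdots u_{m_n}^{r_n}$ occurring in $\Phi(g_n)$ involves exactly $n$ distinct variables $u_{m_i}$, since $m_1 > \ldots > m_n$ and every exponent $r_i \geq 1$. Hence the number of distinct $u$-variables in a monomial records the depth from which it came, so the monomials produced by $g_n$ and $g_{n'}$ for $n \neq n'$ are disjoint. Consequently $\Phi(g) = 0$ forces $\Phi(g_n) = 0$ for every $n$, and it suffices to treat one fixed depth~$n$.

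For fixed $n$, each multi-index $(m_1, \ldots, m_n, r_1, \ldots, r_n)$ with $m_1 > \ldots > m_n > 0$ and $r_1, \ldots, r_n \geq 1$ determines a unique monomial $u_{m_1}^{r_1}\cdots u_{m_n}^{r_n}$ (we recover the $m_i$ as the distinct indices in decreasing order and the $r_i$ as their exponents), so the coefficient of this monomial in $\Phi(g_n)$ is exactly $g_n(m_1, \ldots, m_n, r_1, \ldots, r_n)$. Thus $\Phi(g_n) = 0$ is equivalent to the statement that the polynomial $g_n$ vanishes at every integer point of the cone $C_n = \{m_1 > \ldots > m_n > 0,\ r_i \geq 1\}$.

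The remaining --- and really the only substantive --- step is to show that a polynomial vanishing on $C_n$ is identically zero; the mild obstacle here is that $C_n$ is \emph{not} a product of infinite sets, owing to the ordering constraint on the $m_i$. I would remove this by the triangular affine change of variables $t_n = m_n$, $t_i = m_i - m_{i+1}$ for $i < n$ (and $s_i = r_i$), which is an invertible affine map carrying the integer points of $C_n$ bijectively onto $(\Z_{\geq 1})^n \times (\Z_{\geq 1})^n$. Writing $g_n$ in the new coordinates gives a polynomial $\tilde{g}_n$ that vanishes on the full product $(\Z_{\geq 1})^{2n}$ of infinite sets; by the standard fact that a polynomial over an infinite field vanishing on such a product is zero (applied one variable at a time), $\tilde{g}_n = 0$ and hence $g_n = 0$. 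Combining over all $n$ yields $g = 0$, proving that $\Phi$ is injective.
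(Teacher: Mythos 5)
Your proof is correct and follows essentially the same route as the paper: read off the values of $g_n$ as coefficients of the monomials $u_{m_1}^{r_1}\cdots u_{m_n}^{r_n}$ (which separate the depths, since the number of distinct $u$-variables records $n$) and conclude that $g_n$ vanishes on the cone $m_1>\dots>m_n>0$, $r_i\geq 1$. The only difference is that the paper simply asserts that a nonzero polynomial has a nonzero value at some such integer point, whereas you justify this via the triangular substitution $t_i=m_i-m_{i+1}$ --- a worthwhile detail to make explicit.
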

\begin{proof}
Suppose $\Phi(g)=0$ and write $g=(g_0,g_1,\ldots)$ with $g_n\in \q[x_1,\ldots,x_n,y_1,\ldots,y_n]$. Suppose there exists a minimal~$n$ such that $g_n\not\equiv 0$. Then, there are integers ${m_1>\ldots>m_n>0}$ and $r_1,\ldots,r_n\geq 1$ such that
\[g_n(m_1,\ldots,m_n,r_1,\ldots,r_n)\neq 0.\]
Hence, the coefficient of $u_{m_1}^{r_1}\cdots u_{m_n}^{r_n}$ in $\Phi(g)$ is non-zero, contradicting our assumption. 
\end{proof}

%We now give a third equivalent definition. The equivalence of this definition with the previous will be clear when we, in Section~\ref{sec:ubracpolfunc} compute the $\vec{u}$-bracket of polynomial function on partitions. 
%\begin{definition}\label{defn:polfunc} 
\begin{corollary}
Let $f\in\q^\PP$. Then $f\in \MS$ precisely if there exist a $p_0\in \Q$ and for $n\geq 1$  polynomials $p_n \in y_1\dots y_n \q[x_1,\dots,x_n,y_1,\dots,y_{n}]$ with $p_n\equiv0$ for all but finitely many $n$, such that for any partition $\lambda$ we have
\begin{align}\label{eq:representationofpolfunc}
    f(\lambda) \=  p_0 \+ \sum_{n\geq 1}\sum_{m_1>\dots>m_n>0} p_n(m_1,\dots,m_n,r_{m_1}(\lambda),\dots,r_{m_n}(\lambda))\,.
\end{align}
Moreover, the function~$f$
\begin{enumerate}[{\upshape(i)}, leftmargin=*]\itemsep3pt
\item is of \emph{weight} $\leq k$ if $\deg(p_n) \leq k$ for all $n\geq 0$.
\item is of \emph{depth} $\leq r$ if $p_n \equiv 0$ for $n>r$, 
\item %By Proposition~\ref{prop:inj} 
uniquely determines the polynomials~$p_n\mspace{1mu}$.
\end{enumerate}
\end{corollary}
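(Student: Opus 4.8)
The plan is to recognize the statement as a reformulation of Definition~\ref{def:MS} obtained by replacing the inner finite summations $\sum_{r_i=1}^{r_{m_i}(\lambda)}$ with a single polynomial evaluation; the bridge is the classical bijection between a polynomial and its discrete summation. First I would analyze, in one variable, the $\Q$-linear \emph{summation operator} $\Sigma_y\colon\q[y]\to\q[y]$ defined by $(\Sigma_y h)(N)=\sum_{r=1}^N h(r)$, the right-hand side being a polynomial in $N$ of degree $\deg h+1$. I claim $\Sigma_y$ is a linear bijection onto the ideal $y\,\q[y]$. Injectivity follows by finite differences: if $\Sigma_y h\equiv 0$ then $h(N)=(\Sigma_y h)(N)-(\Sigma_y h)(N-1)=0$ for all $N\geq 1$, so $h\equiv 0$. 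Surjectivity follows by telescoping: given $P\in y\,\q[y]$, i.e.\ $P(0)=0$, the polynomial $h(y)=P(y)-P(y-1)$ satisfies $\Sigma_y h=P$. Moreover $\Sigma_y(h)$ lies in $y\,\q[y]$ because its value at $N=0$ is the empty sum.

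Next I would iterate. Treating the $x_j$ as parameters, the operators $\Sigma_{y_1},\dots,\Sigma_{y_n}$ commute, and their composition
\[ \Sigma^{(n)}\defis \Sigma_{y_1}\circ\cdots\circ\Sigma_{y_n}\colon \q[x_1,\dots,x_n,y_1,\dots,y_n]\longrightarrow y_1\cdots y_n\,\q[x_1,\dots,x_n,y_1,\dots,y_n] \]
is a $\Q$-linear bijection onto the stated ideal: its image lands in $(y_1)\cap\cdots\cap(y_n)=(y_1\cdots y_n)$, and bijectivity is inherited from the one-variable case applied one variable at a time. A short degree count shows $\deg\Sigma^{(n)}(g)=\deg g+n$: on a monomial $x^a y^b$ the operator produces a polynomial with leading term a nonzero multiple of $x^a y^{b+\mathbf 1}$, and since $(a,b)\mapsto(a,b+\mathbf 1)$ is injective, the top-degree contributions of distinct monomials of $g$ cannot cancel. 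With this in hand I would substitute into Definition~\ref{def:MS}: for $g=(g_0,g_1,\dots)$ the inner sum $\sum_{r_1=1}^{r_{m_1}(\lambda)}\!\cdots\!\sum_{r_n=1}^{r_{m_n}(\lambda)} g_n$ equals $(\Sigma^{(n)}g_n)$ evaluated at $(m_1,\dots,m_n,r_{m_1}(\lambda),\dots,r_{m_n}(\lambda))$, so putting $p_0=g_0$ and $p_n=\Sigma^{(n)}g_n\in y_1\cdots y_n\,\q[\dots]$ turns $\Psi(g)(\lambda)$ into~\eqref{eq:representationofpolfunc}. Conversely, since $\Sigma^{(n)}$ is onto the ideal, every family $(p_n)$ as in the statement arises this way from a unique $g$, so the associated $f$ lies in $\MS$; this yields the asserted equivalence.

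Finally the three addenda are bookkeeping. For~(i), the identity $\deg p_n=\deg g_n+n$ together with the convention that $g_n$ has weight $\deg g_n+n$ shows that $f$ has weight $\leq k$ (i.e.\ every $g_n$ has weight $\leq k$) if and only if $\deg p_n\leq k$ for all $n$. For~(ii), bijectivity of $\Sigma^{(n)}$ gives $p_n\equiv0\iff g_n\equiv0$, so the depth of $f$, namely the largest $n$ with $g_n\not\equiv0$, is $\leq r$ exactly when $p_n\equiv0$ for all $n>r$. For~(iii), since $\Phi=\langle\ \rangle_{\vec u}\circ\Psi$ is injective by Proposition~\ref{prop:inj} and the $\vec u$-bracket is an isomorphism, $\Psi$ is injective; hence $g$, and therefore each $p_n=\Sigma^{(n)}g_n$, is uniquely determined by $f$. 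I expect the only genuinely non-formal point to be the exact degree count $\deg\Sigma^{(n)}(g)=\deg g+n$, i.e.\ the non-cancellation of leading terms, since this is precisely what upgrades the weight statement in~(i) to an equivalence rather than a one-sided bound; everything else reduces to the elementary discrete-summation bijection.
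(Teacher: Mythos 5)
Your proposal is correct and follows exactly the argument the paper leaves implicit: the corollary is deduced from Proposition~\ref{prop:inj} via the discrete-summation bijection $h\mapsto\sum_{r=1}^{N}h(r)$ (whose inverse is the finite-difference operator $\partial$ that the paper itself uses, e.g.\ in the Bernoulli--Seki model and in Proposition~\ref{prop:ubracketofs}), applied variable by variable to convert the nested sums $\sum_{r_i=1}^{r_{m_i}(\lambda)}$ of Definition~\ref{def:MS} into a single evaluation of a polynomial in $y_1\cdots y_n\,\q[x_1,\dots,x_n,y_1,\dots,y_n]$. Your degree count $\deg\Sigma^{(n)}(g)=\deg g+n$ and the derivation of uniqueness from the injectivity of $\Phi$ are both sound, so the proof is complete.
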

\begin{proof}
Here, we use that $\sum_{r=1}^R r^d$ is a polynomial which is of degree $d+1$ in $R$ and divisible by $R$. For~(iii) we use the previous proposition. 
\end{proof}

%\item By $\MS_{\leq k}$ we denote the space of polynomial functions of weight $\leq k$.

%\end{definition}
% \begin{definition} We call a function $f\in\q^\PP$ a \emph{monomial function on partitions} if for some $n\geq 0$ there exist polynomials $p \in \q[x_1,\dots,x_n]$ and  $q \in x_1 \dots x_n \q[x_1,\dots,x_n]$ such that for any partition $\lambda$ we have
% \begin{align*}
%     f(\lambda) =  \sum_{m_1>\dots>m_n>0} p(m_1,\dots,m_n) \, q(r_{m_1}(\lambda),\dots,r_{m_n}(\lambda))\,.
% \end{align*}
% By $\MS$ we denote the $\Q$-vector space spanned by all monomials on partitions and we refer to any element in $f\in \MS$ as a \emph{polynomial function on partitions}.

% In case $f$ can be written as a linear combination of monomial function on partitions with $n\leq p$ or $\deg pq\leq k$, we say $f$ is of depth $\leq p$ or weight $\leq k$ respectively. 
%\end{definition}

\begin{remark}
The function
\begin{align}\label{eq:k_2=0}
f(\lambda) = \sum_{m_1>m_2>0} m_1 m_2 \, r_{m_1}(\lambda)
\end{align}
is a polynomial function on partitions, although $x_1 x_2 y_1\not \in y_1 y_2\q[x_1,x_2,y_1,y_2]$. Namely, observe that
\[f(\lambda) \= \sum_{m_1>0} \tfrac{1}{2}m_1^2(m_1-1)\, r_{m_1}(\lambda).\]
Observe that~$f$ is of weight $\leq 4$ and depth $\leq 1$; a fact one could not easily read of from the expression~\eqref{eq:k_2=0}. 

More generally, one could relax the condition $p_n\in y_1\cdots y_n\q[x_1,\ldots,x_n,y_1,\ldots,y_n]$ %in Definition~\ref{defn:polfunc} 
by $p_n\in y_1\q[x_1,\ldots,x_n,y_1,\ldots,y_n]$, at the cost of breaking the uniqueness of the representation~\eqref{eq:representationofpolfunc}---as we will see in Corollary~\ref{cor:basis}---and making it harder to read of the degree and depth.
\end{remark}

\begin{remark}
There exist natural extensions $\MS(N)$ of~$\MS$ to higher levels, i.e., such that $\MM(N)$ is the subspace of $\MS(N)$ for which the $q$-brackets are quasimodular forms of level~$N$ and for which the corresponding limits as $q$ tends to an $N$th root of unity are multiple polylogarithms at root of unity (sometimes called \emph{colored multiple zeta values}). For example, the alternating multiple zeta value $\log 2$ can be obtained in this way: see Example~\ref{ex:limit}\ref{it:altMZV}. More concretely, one would take \[\bigoplus_{n\geq 0}\q[x_1,\ldots,x_n,y_1,\ldots,y_n,\zeta^{x_1},\ldots,\zeta^{x_n},\zeta^{y_1},\ldots,\zeta^{y_n} \mid \zeta^N=1]\]
as the domain in the definition of~$\Phi$.
We do not pursue to work out all details in the current work, but instead refer to \cite{I2}, where this has been worked out in a similar setting. 
\end{remark}

\subsection{Bases for the space of polynomial functions on partitions}
%To be more concrete, w
We define a basis of $\MS$, or rather different bases. These bases depend on a polynomial sequence 
%(a sequence of polynomials indexed by the non-negative integers in which each index is equal to the degree of the corresponding polynomial) 
in the following way. 

% \begin{example}
% We define the following choices for well-normalized families of polynomials
% \def\arraystretch{1.5}%
% \begin{table}[ht!]
% \begin{tabular}{l l l}
% Abbr. & Name & Definition of $f_k$ for $k\geq 1$ \\\hline
% $\mathrm{m}$ & monomial powers & $f_k(x)=x^k$ \\
% $\mathrm{s}$ & Bernoulli--Seki polynomials & $f_k(x)-f_k(x-1) = x^{k-1}$ and $f_k(0)=0$ \\
% $\mathrm{b}$ & binomial coefficients & $f_k(x) = \binom{x}{k}$ \\
% $\mathrm{b}^+$ &  shifted binomial coefficients & $f_k(x) = \binom{x+1}{k}-\delta_{k,1}$
% \end{tabular}
% \end{table}
% \end{example}

\begin{definition}
Let $\FF=\{f_k\}_{k=0}^\infty$ be a polynomial sequence (i.e., $f_k\in \q[x]$ is of degree $k$) such that $f_0=1$ and $f_k(0)=0$ for $k\geq 1$. For $r\geq 0$, $k_1\geq 1, k_2,\dots,k_r \geq 0$ and $d_1,\dots,d_r \geq 0$, we define the map 
 \[  \ms[\FF]{k_1, \dots , k_r}{d_1,\dots,d_r}  : \PP \rightarrow \Q \]
 by 
\begin{align*}
\msl[\FF]{k_1, \dots , k_r}{d_1,\dots,d_r}{\lambda} \defis \sum_{m_1 > \dots > m_r > 0}\, \prod_{j=1}^{r} m_j^{d_j} f_{k_j}(r_{m_j}(\lambda))
\end{align*}
and by $P_{\FF}(\lambda)=1$ if $r=0$. 
%Often we omit the subscript $\FF$. 
We write
\begin{align*}
\hm[\FF]{k_1, \dots , k_r} \=  \ms[\FF]{k_1, \dots , k_r}{0,\dots,0}, \quad \jm[\FF]{d_1, \dots , d_r} \=  \ms[\FF]{1, \dots , 1}{d_1,\dots,d_r}.
\end{align*}
\end{definition}
Note that $\mssmall[\FF]{k_1, \dots , k_r}{d_1,\dots,d_r} \in \MS$, $\hm[\FF]{k_1, \dots , k_r} \in \MS\cap \mathscr{H}$ and $ \jm[\FF]{d_1, \dots , d_r}  \in \MS\cap \mathscr{J}$, where $\mathscr{H}$ and $\mathscr{J}$ were defined in Definition~\ref{def:HandJ}. 
%We refer to the functions $P_\FF$ as \emph{partition analogues of multiple zeta values}.

\begin{definition} A polynomial sequence $\FF=\{f_k\}_{k=0}^\infty$ with $f_0=1$ and $f_k(0)=0$ for $k\geq 1$ is called \emph{well-normalized} if the leading coefficient of $f_k$ equals $\frac{1}{k!}$. 
In Table~\ref{table:polmodels}, we define four families of polynomials of which the last three are well-normalized.%\clearpage
    %\vspace{-0.3cm}
\def\arraystretch{1.5}%
\begin{table}[ht!]
\begin{tabular}{l | l | l}
% Abbr. & Name & Definition of $f_k$ for $k\geq 1$ \\\hline
$\FF$ & Name & Definition of $f_k$ for $k\geq 1$ \\\hline
$\mathrm{m}$ & monomial model & $f_k(x)=x^k$ \\
$\mathrm{s}$ & Bernoulli--Seki model\tablefootnote{The corresponding polynomials are often referred to as Faulhaber polynomials, but here we use the name which we believe is historically more correct.} & $f_k(x)-f_k(x-1) = \frac{1}{(k-1)!}x^{k-1}$ and $f_k(0)=0$ \\
$\mathrm{b}$ & binomial model & $f_k(x) = \binom{x}{k}$ \\
$\mathrm{b}^+$ &  shifted binomial model & $f_k(x) = \binom{x+1}{k}-\delta_{k,1}$
\end{tabular}\vspace{8pt}
\caption{Overview of different models of partition analogues.}
\label{table:polmodels}
\vspace{-10pt}
\end{table}
\end{definition}

% A main result is the fact that the monomial functions $P$ (for any choice of ${\FF}$) form a basis of $\MS$, rather than a generating set. Note that this contrasts the situation for multiple zeta values, and for $q$-analogues of multiple zeta value, where so far it has not been possible to prove that a certain generating set actually forms a basis. 
\begin{corollary}[of Proposition~\ref{prop:inj}]\label{cor:basis}
For all polynomial sequences~$\FF$, a basis for $\MS_{\leq k}$ is given by the functions $P_{\FF}\bigl(\ar{k_1, \dots , k_r}{d_1,\dots,d_r} \bigr)$ for all $r\geq 0$, $k_1,\dots,k_r \geq 1\,,$ and $d_1,\dots,d_r \geq 0$ such that $k_1+\ldots+k_r+d_1+\ldots+d_r \leq k$. 
\end{corollary}

%\item Denote by $\MS$ the following $\Q$-vector space
%\begin{align*}
%\MS = \big\langle  \ms{k_1, \dots , k_r}{d_1,\dots,d_r} \Big| \,r\geq 0\,, k_1,\dots,k_r \geq 1\,, d_1,\dots,d_r \geq 0 \big\rangle_\Q
%\end{align*}
%and denote its subspaces by
%\begin{align*}
%\MH(n) &= \big\langle  \hm{k_1, \dots , k_r} \Big| \,r\geq 0\,, k_1\geq 1 \text{ and } k_1,k_2\dots,k_r \geq n\big\rangle_\Q \subset \MS \\
%\MJ &= \big\langle  \jm{d_1, \dots , d_r} \Big| \,r\geq 0\,, d_1,\dots,d_r \geq 0  \big\rangle_\Q \subset \MS \,.
%\end{align*}
%\item In addition, denote by $\MS^\text{set}$ the underlying set of generators, i.e.\
%\begin{align*}
%\MS^\text{set} = \big\{  \ms{k_1, \dots , k_r}{d_1,\dots,d_r} \Big| \,r\geq 0\,, k_1,\dots,k_r \geq 1\,, d_1,\dots,d_r \geq 0 \big\}
%\end{align*}
%and similarly for its subspaces.

%\item In addition we define the subspace
%\begin{align*}
%\MH^{+} = \big\langle  \hm{k_1, \dots , k_r} \Big| \,r\geq 0\,, k_1,\dots,k_r \geq 2\,,  \big\rangle_\Q \subset \MH \,.
%\end{align*}
%\end{enumerate}

%\begin{remark}
%Equivalently, we could have defined
%\[\MS=\big\langle  \ms{k_1, \dots , k_r}{d_1,\dots,d_r} \Big| \,r\geq 0\,, k_1\geq 1,k_2,\dots,k_r \geq 0\,, d_1,\dots,d_r \geq 0 \big\rangle_\Q.\]
%However, by defining $\MS$ as above it turns out that the generators are linearly independent. 
%%This implies that $\MH(0)\subset \MS$. 
%\end{remark}

\subsection{Multiplication, conjugations and brackets of partition analogues}\label{sec:ubracpolfunc}
In Section~\ref{sec:operations} we introduced the $\vec{u}$-bracket, two natural conjugation operations, and three natural product operations on the space of all functions on partitions. We will now explain how the space of \emph{polynomial} functions on partitions behaves under these operations.  

First of all, the $\vec{u}$-bracket of elements of $\MS$ is given in Proposition~\ref{prop:ubracketofs} below. As a corollary, the $q$-bracket of such an element can be seen as a $q$-analogue of multiple zeta value as introduced in \cite{B} and further discussed in \cite{BK2}. In \cite{BK2} the authors denote by $\mzv_q$ the space spanned by all $q$-series of the form
\begin{align}\label{eq:defqmzv}
    \sum_{m_1 > \dots > m_r > 0} \prod_{j=1}^r \frac{Q_{k_j}(q^{m_j})}{(1-q^{m_j})^{k_j}}\,,
\end{align}
for $k_1,\dots,k_r\geq 1$ and polynomials $Q_{k_j}(X) \in \Q[X]$ with $\deg Q_{k_j} \leq k_j$ and ${Q_{k_1}(0)=0}$. There it was shown  (\cite[Theorem 1]{BK2}) that $\mzv_q$ is spanned by the $q$-series \eqref{eq:faulhaberqseries}, whose corresponding polynomial functions on partition also span our space $\MS$ (see Example~\ref{ex:qbrac}\ref{it:bibrac}).
That is, 
$ \langle \MS  \rangle_q$ equals the space of $q$-analogues $\mzv_q\mspace{1mu}$.
% \begin{align*}
%     \mzv_q \defis \langle \MS  \rangle_q\,.
% \end{align*}
%It is easy to check this is exactly the same space as introduced in \cite{BK2}, e.g., by making use of the Proposition~\ref{prop:ubracketofs} below. 
In Section~\ref{sec:limits}, we show that the (regularized) limit of $q\rightarrow 1$ of the elements in $\mzv_q$ are always given by (regularized) multiple zeta values. 

Given a polynomial $f$, for an integer $n$, denote by $\partial f(n)=f(n)-f(n-1)$ if $n\geq 1$ and let $\partial f(0)=f(0)$. Then \cite[Proposition~3.1.4 and Lemma~3.5.2]{vI} yields:
\begin{proposition}\label{prop:ubracketofs}
The $\vec{u}$-bracket of $\mssmall[\FF]{k_1,\ldots,k_r}{d_1,\ldots,d_r}$ is given by
\[\left\langle \ms[\FF]{k_1, \dots , k_r}{d_1,\dots,d_r}\right\rangle_{\!\!\vec{u}} \=\sum_{m_1 > \dots > m_r > 0} \prod_{j=1}^r\Bigl(m_j^{d_j} \sum_{n\geq 0}  \partial f_{k_j}(n)\, u_{m_j}^{n}\Bigr).\]
% Given $k_1,\ldots, k_r$, denote $J=\{j\mid k_j\neq 0\}.$ The $\vec{u}$-bracket of the multiple symmetric polynomials are given by
% \[\left\langle \ms[\FF]{k_1, \dots , k_r}{d_1,\dots,d_r}\right\rangle_{\vec{u}} =\sum_{m_1 > \dots > m_r > 0} \prod_{i=1}^r m_i^{d_i}\prod_{j\in J}^r \left(\sum_{n_j>0}\partial f_{k_j}(n_j) u_{m_i}^{n_i}\right).\]
\end{proposition}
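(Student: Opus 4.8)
The plan is to unwind both sides of the claimed identity directly from the definitions, reducing everything to a single-variable computation about the effect of the $\vec{u}$-bracket on functions in $\mathscr{J}$. First I would recall that, by definition,
\[
\msl[\FF]{k_1,\dots,k_r}{d_1,\dots,d_r}{\lambda} \= \sum_{m_1>\dots>m_r>0} \prod_{j=1}^r m_j^{d_j} f_{k_j}\bigl(r_{m_j}(\lambda)\bigr).
\]
Since the $\vec{u}$-bracket is $\Q$-linear and the quantity $r_{m}(\lambda)$ only sees the part of $\lambda$ of size exactly $m$, the key structural observation is that the summand factorizes over the distinct parts $m_1,\dots,m_r$; this is exactly the kind of multiplicative behaviour that the $\vec{u}$-bracket respects, because of the product formula $u_\lambda = u_{\lambda_1}u_{\lambda_2}\cdots$ appearing in Definition~\ref{defn:ubrac}. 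I would therefore want to reduce the computation to the rank-one case and then reassemble.

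The heart of the matter is the single-variable claim that, for a fixed polynomial $g\in\q[x]$ with $g(0)=0$ and a fixed part-size $m$, the $\vec{u}$-bracket of the function $\lambda\mapsto g(r_m(\lambda))$ contributes the factor $\sum_{n\geq 0}\partial g(n)\,u_m^{\,n}$. To see this I would compute $a_\lambda$ via the convolution description $a_\lambda(f)=(f\star\mu)(\lambda)$ recorded in the proof of Lemma~\ref{lem:handjclosed}: convolving $g(r_m(\cdot))$ against the Möbius function $\mu$ implements exactly the finite-difference operator $\partial$, turning $g(n)$ into $g(n)-g(n-1)=\partial g(n)$ on the part of size $m$ occurring $n$ times, and killing any contribution from other part-sizes because $g(0)=0$. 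This is precisely where the hypotheses $f_k(0)=0$ and the definition of $\partial$ (including the convention $\partial f(0)=f(0)$) are used, and it is the step the paper attributes to \cite[Proposition~3.1.4 and Lemma~3.5.2]{vI}. Concretely, one checks that
\[
\Bigl\langle \lambda\mapsto g\bigl(r_m(\lambda)\bigr)\Bigr\rangle_{\!\vec u} \= \sum_{n\geq 0}\partial g(n)\,u_m^{\,n},
\]
and then the scalar $m^{d}$ simply pulls out in front.

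Finally I would assemble the general case. Because the distinct part-sizes $m_1>\dots>m_r$ interact independently — occupying disjoint ``coordinates'' $u_{m_1},\dots,u_{m_r}$ in $\q\llbracket u_1,u_2,\dots\rrbracket$ — the $\vec{u}$-bracket of the full sum is obtained by summing the product of the rank-one factors over all strictly decreasing tuples, which is exactly the right-hand side
\[
\sum_{m_1>\dots>m_r>0}\prod_{j=1}^r\Bigl(m_j^{d_j}\sum_{n\geq 0}\partial f_{k_j}(n)\,u_{m_j}^{\,n}\Bigr).
\]
The main obstacle I anticipate is making the independence of the coordinates rigorous: one must argue carefully that the convolution against $\mu$, which globally mixes all sub-partitions, nevertheless factorizes across distinct part-sizes, so that no cross-terms between $m_i$ and $m_j$ survive. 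This follows from the multiplicativity of $u_\lambda$ together with the fact that $r_{m_i}$ and $r_{m_j}$ depend on disjoint blocks of $\lambda$, but stating it cleanly — rather than relying on an opaque appeal to the cited lemmas in \cite{vI} — is the delicate point; everything else is bookkeeping.
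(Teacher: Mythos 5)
Your argument is correct and is essentially the computation that the paper delegates to the citation of \cite[Proposition~3.1.4 and Lemma~3.5.2]{vI}: the single-variable identity $\langle \lambda\mapsto g(r_m(\lambda))\rangle_{\vec u}=\sum_{n\geq 0}\partial g(n)u_m^n$ (equivalently, $(1-u_m)\sum_n g(n)u_m^n=\sum_n\partial g(n)u_m^n$ after factoring the generating function over distinct part sizes) plus the observation that distinct parts occupy disjoint coordinates, so no cross-terms arise. The only loose end is the letters with $k_j=0$, where $f_0=1$ does not vanish at $0$; these are covered by the stated convention $\partial f_0(n)=\delta_{n,0}$, which is exactly what $(1-u)\cdot\frac{1}{1-u}=1$ produces, so your restriction to $g(0)=0$ loses nothing essential.
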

Recall that $f_0=1$, so that $\partial f_0(n)=\delta_{n,0}\mspace{1mu}$. Moreover, for $k\geq 1$, we have $f_k(0)=0$, so that $\partial f_k(0)=0$. 

\begin{ex}\label{ex:qbrac} For some particular choices of $\FF$, the $q$-bracket of partition analogues of multiple zeta values gives rise to several models for $q$-analogues of multiple zeta values:
\begin{enumerate}[(i), leftmargin=*]\itemsep3pt
\item \label{it:bibrac} The Bernoulli--Seki model $\FF=\mathrm{s}$ corresponds, up to a factor, to the \emph{bi-brackets} introduced by the first author in \cite{B}
\begin{align}\label{eq:faulhaberqseries}
\Bigl\langle\ms[\mathrm{s}]{k_1, \dots , k_r}{d_1,\dots,d_r}\Bigr\rangle_q &= %\left[\begin{matrix} k_1 & \dots & k_r \\ d_1 & \dots & d_r\end{matrix}\right] :=
\sum_{\substack{m_1>\ldots>m_r>0 \\ n_1,\ldots,n_r>0}} \prod_{j=1}^r m_j^{d_j}\frac{n_j^{k_j-1}}{(k_j-1)!} q^{m_jn_j} & (k_i\geq 1).
%\langle\hm[\mathrm{s}]{k_1,\dots,k_r}\rangle_q &= \left[\begin{matrix}k_1 & \dots & k_r\end{matrix}\right] :=
%\sum_{\substack{m_1>\ldots>m_r>0 \\ n_1,\ldots,n_r>0}} \prod_{j=1}^r n_j^{k_j-1} q^{m_jn_j}, & (k_i\geq 1)\\
\end{align}
For $r=1$, $d_1=0$ and even $k_1\geq 2$ these are, up to the constant term, the Eisenstein series $G_{k_1} = - \frac{B_{k_1}}{2{k_1}!} + \bigl\langle\mssmall[\mathrm{s}]{k_1}{0}\bigr\rangle_{\!q}\mspace{1mu}$, defined in~\eqref{eq:eisenstein}. 
In the case $r=1, k_1>1, d_1>0$ and $k_1+d_1$ even, they are essentially the $d_1$-th derivative of $G_{k_1-d_1}$. For example for $k>d>0$ we have
\begin{align}\label{eq:derivofeisensteinseries}
		\Bigl\langle\ms[\mathrm{s}]{k}{d}\Bigr\rangle_{\!q} &\= \frac{(k-d-1)!}{(k-1)!} \left(q \frac{d}{dq}\right)^d G_{k-d}\,.
\end{align}
Together with 
\begin{align*}
\Bigl\langle\ms[\mathrm{s}]{1}{1}\Bigr\rangle_{\!q}  \= \Bigl\langle\ms[\mathrm{s}]{2}{0}\Bigr\rangle_{\!q}  \= G_2 +\frac{1}{24}
\end{align*}
and 
\begin{align*}\Bigl\langle\ms[\mathrm{s}]{k}{d}\Bigr\rangle_{\! q}  \=   \frac{d!}{(k-1)!} \Bigl\langle\ms[\mathrm{s}]{d+1}{k-1}\Bigr\rangle_q 
\end{align*}
we see that all $q$-brackets of $\ms[\mathrm{s}]{k}{d}$ with $k+d$ even are in the ring~$\QM$ of quasimodular forms.
\item The binomial model $\mathcal{F}=\mathrm{b}$ corresponds to the so-called Schlesinger--Zudilin  $q$-analogues of multiple zeta values \cite{S,Zu} given by 
\begin{align}
\langle\hm[\mathrm{b}]{k_1, \dots , k_r}\rangle_q &\= %\zeta_q^{\mathrm{SZ}}(k_1,\ldots,k_r):=
\sum_{m_1>\cdots>m_r>0}\prod_{j=1}^r\frac{q^{m_jk_j}}{(1-q^{m_j})^{k_j}} \qquad (k_1\geq 1, k_i\geq 0).
\end{align}
\item Shifted binomial polynomials correspond to the $q$-analogues of multiple zeta values introduced by Bradley \cite{Bra} and Zhao given by 
\begin{align}
\langle\hm[\mathrm{b}^+]{k_1, \dots , k_r}\rangle_q &\= %\zeta_q^{\mathrm{BZ}}(k_1,\ldots,k_r):=
\sum_{m_1>\cdots>m_r>0}\prod_{j=1}^r\frac{q^{m_j(k_j-1)}}{(1-q^{m_j})^{k_j}} & (k_1\geq 2, k_i\geq 1).
\end{align}
%\langle\jm[\mathrm{m}]{d_1, \dots , d_r}\rangle_q &=\jw{???}:=\sum_{m_1>\cdots>m_r}\prod_{j=1}^r m_j^{d_j}\frac{q^{m_j}}{1-q^{m_j}}. 
%\end{align}
\end{enumerate}
\end{ex}

The three products on the space of all functions on partitions naturally reduce to~$\MS$. That is, we obtain three commutative $\Q$-algebras $(\MS, \ost)$, $(\MS, \oshh)$ and $(\MS, \odot)$ with a natural involution, as stated in the following result.

%\filbreak
\begin{proposition} \label{prop:polfctonpartprop}The space $\MS$ of polynomial function on partitions
\begin{enumerate}[{\upshape (i)}, leftmargin=*]\itemsep3pt
\item  is closed under the conjugation $\iota$.
\item is closed under the harmonic, shuffle and pointwise products $\ost$, $\oshh$ and $\odot$.
\item has $D$ (see Definition~\ref{def:products}) as a derivation on both $(\MS,\ost)$ and $(\MS,\oshh)$.
    \item admits the algebra homomorphisms given in the following commutative diagram of differential $\Q$-algebras (with the derivation $q\frac{d}{dq}$ on $\mzv_q$)
\[\begin{tikzcd}[row sep=tiny]
& (\MS,\ost) \ar[dr,"\langle\ \rangle_q"] \ar[dd, "\iota", leftrightarrow]
& \\
&
& \mzv_q \\
& (\MS,\oshh) \ar[ur,"\langle\ \rangle_q"']
&
\end{tikzcd}\raisebox{-3pt}{.}\]
% \begin{align*}
% (\MS, \bullet) &\longrightarrow \mzv_q\\
% f &\longmapsto \langle f \rangle_q\,,
% \end{align*}
\end{enumerate} 
\end{proposition}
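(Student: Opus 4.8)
The plan is to establish Proposition~\ref{prop:polfctonpartprop} by combining the closure results already available on the ambient space $\q^\PP$ with the injectivity of $\Phi$ (Proposition~\ref{prop:inj}) and the explicit $\vec u$-bracket formula (Proposition~\ref{prop:ubracketofs}). The key observation is that all four operations $\iota,\ost,\oshh,\odot$ are defined on $\q^\PP$, so the only genuine content of parts (i)--(iii) is that $\MS$ is \emph{stable} under them; part (iv) is then essentially a restatement of Proposition~\ref{prop:doubleshuffle} together with this stability.

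For part (ii), I would first treat the harmonic product. Since $\Phi$ is an algebra homomorphism from the polynomial domain (equipped with the product transported along $\Phi$) onto $\mathrm{Im}\,\Phi$ with the ordinary product on $\q\llbracket u_1,u_2,\ldots\rrbracket$, and since $\langle f\ost g\rangle_{\vec u}=\langle f\rangle_{\vec u}\,\langle g\rangle_{\vec u}$ by definition, the product of two $\vec u$-brackets of polynomial functions lies again in $\mathrm{Im}\,\Phi$; concretely, one multiplies the two sums from Proposition~\ref{prop:ubracketofs} and splits the resulting double sum over $m_1>\cdots$ and $m_1'>\cdots$ according to the relative order of the two index sets, which is exactly the combinatorics producing a finite $\Q$-linear combination of the spanning functions $\ms[\FF]{\vec k}{\vec d}$. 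Closure under $\oshh$ then follows formally from closure under $\ost$ together with closure under $\iota$, via $f\osh g=\iota(\iota(f)\ost\iota(g))$, so I would prove (i) before finishing (ii). For $\iota$, the natural route is Lemma~\ref{lem:handjclosed}: the generators $\hm[\FF]{\vec k}\in\mathscr H$ and $\jm[\FF]{\vec d}\in\mathscr J$ are interchanged by $\iota$ (since $\iota\mathscr H=\mathscr J$), and more generally one checks directly on the $\vec u$-bracket that $\iota$ sends a finite sum of monomials $u_{m_1}^{r_1}\cdots u_{m_n}^{r_n}$ supported on $m_1>\cdots>m_n$ to another element of $\mathrm{Im}\,\Phi$ by the conjugation-of-Stanley-coordinates formula recalled in the proof of Lemma~\ref{lem:handjclosed}. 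Closure under the pointwise product $\odot$ is the most elementary: a product of two sums of the form \eqref{eq:representationofpolfunc} is again such a sum, since the product of two inner polynomials in the variables $m_i,r_{m_i}(\lambda)$ is a polynomial in the same variables.

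For part (iii), I would invoke the explicit formula $Df(\lambda)=f(\lambda)|\lambda|-(f\ost|\cdot|)(\lambda)$ stated just before Definition~\ref{def:HandJ}, or more cleanly work on brackets: $D$ acts on $\langle f\rangle_{\vec u}=\sum a_\lambda u_\lambda$ by $\sum a_\lambda|\lambda|u_\lambda$, which differentiates each monomial $u_{m_1}^{r_1}\cdots u_{m_n}^{r_n}$ to $(\sum_j m_jr_j)\,u_{m_1}^{r_1}\cdots$; this reintroduces a factor polynomial in the $m_j$ and $r_j$ and hence lands back in $\mathrm{Im}\,\Phi$, showing $D(\MS)\subseteq\MS$. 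The Leibniz rule on $(\MS,\ost)$ is inherited from the fact that $D$ is a derivation for the ordinary product on $\q\llbracket u_1,u_2,\ldots\rrbracket$ (it is the Euler operator weighting $u_\lambda$ by $|\lambda|$, which is additive under multiplication of monomials); the Leibniz rule on $(\MS,\oshh)$ then follows because $\iota$ commutes with $D$ — one checks $\iota D=D\iota$ on $\vec u$-brackets since conjugation preserves $|\lambda|$, after which $D(f\osh g)=D\iota(\iota f\ost\iota g)=\iota D(\iota f\ost\iota g)$ and the $\ost$-Leibniz rule finishes the argument. Finally part (iv) collects these: commutativity of the diagram is the identity $\langle f\ost g\rangle_q=\langle f\rangle_q\langle g\rangle_q=\langle f\osh g\rangle_q$ and $\langle\iota f\rangle_q=\langle f\rangle_q$ from Proposition~\ref{prop:doubleshuffle}, and compatibility with the derivations is $q\frac{d}{dq}\langle f\rangle_q=\langle Df\rangle_q$ from the same proposition.

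The main obstacle I anticipate is purely bookkeeping rather than conceptual: verifying that the harmonic product of two generators, after splitting the double index sum into the diagonal and off-diagonal strata, is a \emph{finite} combination of the $\ms[\FF]{\vec k}{\vec d}$ with no remaining obstruction to lying in $\mathrm{Im}\,\Phi$. The diagonal terms $m_i=m_j'$ produce a factor $f_{k_i}(r)f_{k_i'}(r)$, which is a polynomial in $r=r_m(\lambda)$ and therefore expands in the chosen basis sequence $\FF$; the off-diagonal terms simply shuffle the two ordered index sets. Making this stuffle-type identity precise — and checking it respects the weight and depth filtration — is the one place where a careful, if routine, computation is needed, and it is cleanest to carry out at the level of the $\vec u$-bracket using Proposition~\ref{prop:ubracketofs} and then transport back via the injectivity of $\Phi$.
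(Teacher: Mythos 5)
Your proposal is correct and follows essentially the same route as the paper: closure under $\ost$ and $\odot$ via the quasi-shuffle (stuffle) decomposition of products of the $\vec u$-brackets from Proposition~\ref{prop:ubracketofs} (which the paper defers to Proposition~\ref{prop:alghoms}), closure under $\iota$ via the conjugation formula for Stanley coordinates, $\oshh$-closure as a formal consequence of the other two, $D$ as the Euler derivation $u_n\mapsto nu_n$ on $\q\llbracket u_1,u_2,\ldots\rrbracket$, and part (iv) from Proposition~\ref{prop:doubleshuffle}. The only point to tighten is your remark that $\odot$-closure is immediate because ``a product of two sums of the form~\eqref{eq:representationofpolfunc} is again such a sum'': the two chains $m_1>\cdots>m_n$ and $m_1'>\cdots>m_{n'}'$ must still be merged by the same interleaving/stuffle splitting you describe for $\ost$ (this is the second homomorphism in Proposition~\ref{prop:alghoms}, with $h(m,k)=r_m(\lambda)^k$), so it is no more elementary than the harmonic case.
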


\begin{proof}
The first statement follows since the conjugate of $(r_1,\ldots, r_d)\times (m_1,\ldots, m_d)$ is given by $(m_1, m_{2}-m_1,\ldots,m_d-m_{d-1})\times (r_1+\ldots+r_d,\ldots,r_1)$ and therefore the conjugate of a polynomial in Stanley coordinates is again a polynomial in Stanley coordinates. Statement (ii) follows from Proposition~\ref{prop:ubracketofs} together with Corollary~\ref{cor:basis}, since the harmonic, as well as the pointwise product, of two $\vec{u}$-brackets of $\ms[\FF]{\dots}{\dots}$ can again be expressed as a linear combination of $\vec{u}$-brackets of $\ms[\FF]{\dots}{\dots}$. We will see this explicitly in Proposition~\ref{prop:alghoms}. Together with (i) this then also shows that the space is closed under~$\oshh$. Since $D$ is the derivation on $\q\llbracket u_1,u_2,\ldots\rrbracket$, defined on the generators by $D u_n =n u_n\mspace{1mu}$, (iii) follows immediately by the fact that both~$\ost$ and~$\oshh$ are defined via the $\vec{u}$-bracket. 
Finally (iv) follows from (i)--(iii) and the definition of~$\ost$ by the $\vec{u}$-bracket after setting $u_n = q^n$.
\end{proof}
\begin{remark}
The space $\MM \subset \MS$ is also closed under $\iota$ and the products $\ost$, $\oshh$. Moreover, since the space of quasimodular forms is closed under $q \frac{d}{dq}$, the space $\MM$ is closed under $D$ as well, i.e. $\MM$ satisfies almost all properties of Proposition~\ref{prop:polfctonpartprop}. But one can check that $\MM$ is not closed under $\odot$.
In general, it is not clear how $\MM$ can be described explicitly as a subspace of $\MS$. Since it contains the kernel of the $q$-bracket, it might be difficult to give an explicit description. In this note, we will give several examples of explicit elements in $\MM$, but expect that there are much more (see, e.g., Example~\ref{ex:pi}). 
Another question is if there are other interesting subalgebras of $\MS$, which satisfy (some of) the properties of Proposition~\ref{prop:polfctonpartprop}.
\end{remark}

\begin{remark} 
The space $\MS$ is not closed under the conjugation $\omega$. For example, for $\FF$ well-normalized
$\omega(\ms[\FF]{1}{0})(\lambda) = \lambda_1$
equals the biggest part of the partition~$\lambda$. 
It is a small computation to verify that
\[ \langle \lambda_1 \rangle_{\vec{u}} \= - \sum_{\lambda\in \PP} \mu(\lambda)\, \lambda_\ell \, u_{\lambda} \,\not\in \,\langle \MS\rangle_{\vec{u}}\qquad (\ell=\ell(\lambda) \text{ is the \emph{length} of } \lambda).\]
The subspace~$\Lambda^*$ of $\MS$, defined in the introduction, however, has the remarkable property that it is closed under~$\omega$ but not under $\iota$. 
\end{remark}
\begin{remark}\label{rk:iotaexplicit}
For the Bernoulli--Seki model $\FF=\mathrm{s}$ the involution~$\iota$ is explicitly given by
\begin{align}\label{eq:iotaexplicit}
\iota\ \ms[\mathrm{s}]{k_1, \dots , k_r}{d_1,\dots,d_r} \= 
\sum_{\substack{a_1,\ldots,a_r\geq 1 \\ |\vec{a}|=|\vec{d}|+r}}\sum_{\substack{b_1,\ldots,b_r\geq 0 \\ |\vec{b}|=|\vec{k}|-r}}
C^{\vec{a},\vec{k}}_{\vec{b},\vec{d}}\, \ms[\mathrm{s}]{a_1,\ldots,a_r}{b_1,\ldots, b_r},
\end{align}
% \[\hen{\iota\ \ms[\mathrm{s}]{k_1, \dots , k_r}{d_1,\dots,d_r} = 
% \sum_{\substack{a_1,\ldots,a_r\geq 1 \\ |\vec{a}|=|\vec{d}|+r}}\sum_{\substack{b_1,\ldots,b_r\geq 0 \\ |\vec{b}|=|\vec{k}|-r}}
% C^{\vec{a},\vec{k}}_{\vec{b},\vec{d}} \ms[\mathrm{s}]{a_1,\ldots,a_r}{b_1,\ldots, b_r},}
% \]
where for $\vec{a},\vec{b},\vec{d},\vec{k}\in \Z^r$ the constant $C^{\vec{a},\vec{k}}_{\vec{b},\vec{d}}$ is given by
\[C^{\vec{a},\vec{k}}_{\vec{b},\vec{d}} \=(-1)^{|\vec{b}|}\prod_{j=1}^r\textstyle\binom{s_j(\vec{d})-s^j(\vec{a})+j-1}{a_{r-j+1}-1}\binom{k_{r-j+1}-1}{s_j(\vec{b})-s^j(\vec{k})+j-1} \frac{(a_j-1)!}{(k_j-1)!}(-1)^{s_j(\vec{k})+s^j(\vec{b})+j}
\]
and where for $\vec{\ell}\in \z^r$ and $j=1,\ldots,r$, we write
\[ s_j(\vec{\ell}) = \sum_{i=1}^j \ell_i, \qquad s^j(\vec{\ell}) = \sum_{i=r-j+2}^r \ell_i\,.\]
% \[C^{\vec{a},\vec{k}}_{\vec{b},\vec{d}}=\prod_{j=1}^r\textstyle\binom{d_1+\ldots+d_j-a_r-\ldots-a_{r-j+2}+j-1}{a_{r-j+1}-1}\binom{k_{r-j+1}-1}{b_1+\ldots+b_j-k_r-\ldots-k_{r-j+2}+j-1}(-1)^{k_1+\ldots+k_{r-j+1}-b_1-\ldots-b_j-j}
% \]
%\[\hen{C^{\vec{a},\vec{k}}_{\vec{b},\vec{d}}=\prod_{j=1}^r\textstyle\binom{d_1+\ldots+d_j-a_r-\ldots-a_{r-j+2}+j-1}{a_{r-j+1}-1}\binom{k_{r-j+1}-1}{b_1+\ldots+b_j-k_r-\ldots-k_{r-j+2}+j-1}(-1)^{|\vec{k}|-k_1-\dots-k_j-b_1-\ldots-b_j-j}}
%\]
\end{remark}

Since the $q$-bracket is $\iota$-invariant \eqref{eq:iotaexplicit} gives explicit linear relations among the $q$-brackets of the Bernoulli--Seki model \eqref{eq:faulhaberqseries}. This corresponds to the partition relation in \cite[Theorem 2.3]{B}. There it was also conjectured that these families of relations together with the stuffle product are enough to write any $q$-series \eqref{eq:faulhaberqseries} as a linear combination of those with either $d_1=\dots=d_r=0$ or $k_1=\dots=k_r=1$. In our setup here this translates to the following model independent conjecture:
\begin{conjecture}We have 
\begin{align*}
  \mzv_q = \langle \MS  \rangle_q =  \langle \MS  \cap \mathscr{H}\rangle_q =  \langle \MS  \cap \!\mathscr{J}\rangle_q\,.
\end{align*}
\end{conjecture}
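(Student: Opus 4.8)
The plan is to prove the three claimed equalities $\mzv_q = \langle \MS \rangle_q = \langle \MS \cap \mathscr{H}\rangle_q = \langle \MS \cap \mathscr{J}\rangle_q$ by a chain of inclusions. The equality $\mzv_q = \langle \MS \rangle_q$ is already established in the excerpt: by Theorem~1 of \cite{BK2} the space $\mzv_q$ is spanned by the $q$-series \eqref{eq:faulhaberqseries}, and by Proposition~\ref{prop:ubracketofs} together with Example~\ref{ex:qbrac}\ref{it:bibrac} these are precisely the $q$-brackets of the functions $\ms[\mathrm{s}]{k_1,\dots,k_r}{d_1,\dots,d_r}$, which form a basis of $\MS$ by Corollary~\ref{thm:basis}. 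Since $\MS \cap \mathscr{H} \subset \MS$ and $\MS \cap \mathscr{J} \subset \MS$, the inclusions $\langle \MS \cap \mathscr{H}\rangle_q \subseteq \langle \MS \rangle_q$ and $\langle \MS \cap \mathscr{J}\rangle_q \subseteq \langle \MS \rangle_q$ are automatic; the content of the conjecture is the reverse inclusions, namely that restricting to the harmonic-compatible (resp.\ shuffle-compatible) functions does not shrink the image under the $q$-bracket.

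For the inclusion $\langle \MS \rangle_q \subseteq \langle \MS \cap \mathscr{H}\rangle_q$, I would work basis-element by basis-element. The functions $\hm[\FF]{k_1,\dots,k_r} = \ms[\FF]{k_1,\dots,k_r}{0,\dots,0}$ lie in $\MS \cap \mathscr{H}$ by the remark following the definition of $P_\FF$, so it suffices to show that the $q$-bracket of an arbitrary basis element $\ms[\mathrm{s}]{k_1,\dots,k_r}{d_1,\dots,d_r}$ (with some $d_i > 0$) agrees with the $q$-bracket of a $\Q$-linear combination of the $\hm[\mathrm{s}]{\dots}$. The natural strategy is to use the two families of relations already displayed in the excerpt: the harmonic (stuffle) product, under which $\langle \cdot \rangle_q$ is a homomorphism by Proposition~\ref{prop:doubleshuffle}, and the $\iota$-invariance relations \eqref{eq:iotaexplicit}, which express $\langle \ms[\mathrm{s}]{k_1,\dots,k_r}{d_1,\dots,d_r}\rangle_q$ as a linear combination of brackets $\langle \ms[\mathrm{s}]{a_1,\dots,a_r}{b_1,\dots,b_r}\rangle_q$ with the $x$- and $y$-degrees interchanged (this is exactly the trade-off that lowers the $d_i$ at the expense of raising the $k_j$). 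The dual inclusion $\langle \MS \rangle_q \subseteq \langle \MS \cap \mathscr{J}\rangle_q$ would then follow by the conjugation symmetry $\iota\mathscr{H} = \mathscr{J}$ of Lemma~\ref{lem:handjclosed} together with the $\iota$-invariance $\langle \iota(f)\rangle_q = \langle f\rangle_q$, applied to the functions $\jm[\FF]{d_1,\dots,d_r} \in \MS \cap \mathscr{J}$; concretely, one reduces to the case $k_1 = \dots = k_r = 1$ instead of $d_1 = \dots = d_r = 0$.

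The main obstacle is precisely the reduction step, which is the content of the open conjecture in \cite[Theorem~2.3 and the surrounding discussion]{B}: it is \emph{not} known that the stuffle relations together with the partition relations \eqref{eq:iotaexplicit} suffice to eliminate all the $d_i$ simultaneously (or, dually, all the $k_j>1$). Applying \eqref{eq:iotaexplicit} once lowers the $y$-degrees but raises the $x$-degrees, and it is not obvious that iterating this together with stuffle products terminates in the desired subspace rather than cycling; proving termination appears to require a genuinely new structural input, such as an explicit dimension count or an algorithm with a provably decreasing complexity measure on the pairs $(\vec{k},\vec{d})$. This is why the statement is recorded as a \emph{conjecture} rather than a theorem: the inclusions $\langle \MS \cap \mathscr{H}\rangle_q \subseteq \mzv_q$ and $\langle \MS \cap \mathscr{J}\rangle_q \subseteq \mzv_q$ are immediate, but the reverse inclusions would establish that the harmonic model and the shuffle model each already generate the full space of $q$-analogues, which is the partition-theoretic counterpart of the still-unresolved generation conjectures for $q$-multiple zeta values.
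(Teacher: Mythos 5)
Your assessment is correct and matches the paper's own commentary on this statement: the paper records it as a conjecture precisely because, as you say, the equality $\mzv_q = \langle \MS \rangle_q$ is known, the equality $\langle \MS \cap \mathscr{H}\rangle_q = \langle \MS \cap \mathscr{J}\rangle_q$ follows from $\iota(\MS \cap \mathscr{H}) = \MS \cap \mathscr{J}$ and the $\iota$-invariance of the $q$-bracket, while the remaining inclusion $\langle \MS \rangle_q \subseteq \langle \MS \cap \mathscr{H}\rangle_q$ is exactly the open reduction conjectured in \cite{B} (that stuffle products together with the partition relation suffice to eliminate the $d_i$). Since the paper offers no proof, your identification of the known parts, the open part, and the obstruction to the naive iteration argument is the appropriate answer.
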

Notice that the first equation here is known as explained in the remark at the beginning of Section~\ref{sec:ubracpolfunc} and the last equation is clear since $\iota(\MS  \cap \mathscr{H}) = \MS  \cap \mathscr{J}$. The functions in $\MS  \cap \mathscr{H}$ are exactly those polynomial functions on partitions where the polynomials in Definition~\ref{def:MS} are independent of the variables $x_i$. This conjecture was also discussed in more detail for the Bernoulli--Seki model in \cite{BK2} and \cite{Bri2}.

\subsection{Some remarks about the M\"oller transform}
In contrast to the conjugation, products and derivation above, some natural operations on functions on partitions do not leave the space of polynomial functions invariant. One of the most interesting such operators is the M\"oller transform $\mathcal{M}:\q^\PP\to\q^\PP$ in %discovered by M\"oller in
\cite{CMZ}:
\begin{definition} Given $f:\PP\to \q$, its \emph{M\"oller transform} $\mathcal{M}f:\PP\to\q$ is the function given on a partition~$\lambda$ of size~$n$ by
\[\mathcal{M}f(\lambda) \= \frac{1}{n!} \sum_{\mu \in \PP(n)} |C_\mu|\,\chi_\lambda(\mu)^2 \, f(\mu),\]
where $|C_\mu|$ denotes the size of the conjugacy class associated to $\mu$ in the symmetric group $\mathfrak{S}_n\mspace{1mu}$, and $\chi_\lambda(\mu)$ denotes value of the character of the irreducible representation of $\mathfrak{S}_n$ associated to $\lambda$ at any element in the conjugacy class associated to $\mu$.
\end{definition}
The key property of the M\"oller transform (which follows directly from the second orthogonality relation for the characters of the symmetric group) is that for all $f:\PP\to \q$ one has
\[\langle \mathcal{M}f\rangle_q = \langle f \rangle_q\,.\]
For the following subvectorspace of $\MS$, the M\"oller transform is known to preserve the space of polynomial functions: 
\begin{proposition}\label{prop:moller}
Let $f\in \MS$. Then $\mathcal{M}(f)\in \MS$ if $f$ is in the vector space generated by the constant function~$1$ and
\[ \ms{1}{k-1} \odot \underbrace{\ms{1}{1} \odot \cdots \odot \ms{1}{1}}_{m}\]
for any $k\geq 2$ and $m\geq 0$.
\end{proposition}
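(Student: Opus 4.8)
The strategy is to show that the M\"oller transform acts diagonally (or at least triangularly) on the indicated generators, which requires relating the transform to a family of operators whose action on $\MS$ is already understood. The starting observation is that the M\"oller transform is characterised by the property $\langle \mathcal{M}f\rangle_q = \langle f \rangle_q$, so $\mathcal{M}f$ and $f$ can only differ by an element of the kernel of the $q$-bracket. Thus the content of the proposition is that for the listed functions, this ambiguity can be resolved inside~$\MS$: there is a \emph{canonical} representative in $\MS$ with the prescribed $q$-bracket, and $\mathcal{M}f$ lands on it.

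First I would connect the generators to shifted symmetric functions. The function $\ms{1}{k-1}$ is, up to normalisation and lower weight corrections, expressible in terms of the $Q_k$ from~\eqref{eq:defQk}: indeed $\ms{1}{k-1}(\lambda) = \sum_{m>0} m^{k-1} r_m(\lambda)$ is a power-sum--type symmetric function of the parts, and the $Q_k$ are built from exactly such power sums (shifted by $\pm\tfrac12$). The key classical input is that the M\"oller transform acts on $\Lambda^*$ in a well-understood way; in particular $\mathcal{M}$ preserves $\Lambda^*$ and acts on the weight-graded pieces by an explicit (upper-triangular, with known diagonal) operator. So the plan is: (a) express each generator $\ms{1}{k-1}\odot(\ms{1}{1})^{\odot m}$ as an element of $\Lambda^*$ plus an element of lower weight lying in the span of the same family of generators; (b) invoke the known stability of $\Lambda^*$ under $\mathcal{M}$; (c) conclude by induction on weight that the whole span is preserved.

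The reduction in step (a) is where the combinatorics concentrate. One must check that a product of the shifted symmetric power sums $Q_k$ can be rewritten, modulo lower-weight terms, as a pointwise product of the $\ms{1}{j-1}$'s, and conversely, so that the two spans coincide filtration-by-filtration. The pointwise product $\odot$ corresponds on the symmetric-function side to the ordinary product of power sums, which stays within $\Lambda^*$; the only subtlety is the passage between the ``Stanley-coordinate'' normalisation $\sum_m m^{k-1} r_m(\lambda)$ and the $\beta_k$-shifted definition of $Q_k$, producing controlled lower-order corrections that, by induction, already lie in the span.

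\textbf{Main obstacle.} The hard part will be controlling these lower-weight correction terms precisely enough to keep them inside the specified generating set rather than in a larger subspace of $\MS$; that is, verifying that the corrections arising both from the $\beta_k$-shift in~\eqref{eq:defQk} and from the known action of $\mathcal{M}$ on $\Lambda^*$ can always be re-expressed through functions of the form $\ms{1}{k'-1}\odot(\ms{1}{1})^{\odot m'}$ (together with the constant~$1$), and do not leak into terms requiring genuinely higher-depth polynomial functions. Making the induction on weight close cleanly therefore depends on an exact bookkeeping of how depth and weight interact under $\odot$ and under the triangular operator representing $\mathcal{M}|_{\Lambda^*}$.
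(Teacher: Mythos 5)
Your proposed route has two genuine gaps, both in step (a) and step (b) of your plan. For step (a): $\ms{1}{k-1}(\lambda)=\sum_{m>0}m^{k-1}r_m(\lambda)=\sum_i\lambda_i^{k-1}$ is an \emph{ordinary} power sum of the parts, and for $k\geq 3$ this is not a shifted symmetric function, not even modulo lower-weight corrections: the generators $Q_k$ are built from the shifted quantities $\lambda_i-i+\tfrac12$, so their top-weight parts contain cross terms like $\sum_i\lambda_i^a i^b$, which in the coordinates of $\MS$ are contributions of depth up to $k-1$ (compare Proposition~\ref{prop:QkinStanleyCoordinates}), whereas $\ms{1}{k-1}$ has depth $1$. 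The discrepancy is of full weight, so your induction on weight cannot absorb it, and the two spans do not coincide filtration-by-filtration. For step (b): the asserted ``known stability of $\Lambda^*$ under $\mathcal{M}$ with triangular action'' is not established in the cited literature and would itself require proof; the remark following the proposition (that $\mathcal{M}\ms{1}{0}\notin\MS_{\leq 1}$, and that only a $10$-dimensional subspace of the $233$-dimensional space $\MS_{\leq 6}$ is mapped into $\MS$) shows that such preservation statements are exactly the delicate point and cannot be taken as input. Your opening observation that $\mathcal{M}f-f$ lies in the kernel of the $q$-bracket is true but carries no information here, since that kernel is huge and mostly disjoint from $\MS$.

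The actual proof rests on two concrete facts that your proposal does not touch. First, by \cite[Thm.~13.5]{CMZ} the M\"oller transform of $\ms{1}{k-1}$ is the hook-length moment $T_k(\lambda)=\sum_{\xi\in Y_\lambda}h(\xi)^{k-2}$; since the hook length is $a(\xi)+b(\xi)+1$, this is the arm--leg moment $\mathcal{A}_g$ for $g(x,y)=(x+y)^{k-2}$, which lies in $\MS$ (indeed in $\MM$) by the analysis of Section~\ref{sec:armbein}. Second, $\ms{1}{1}(\lambda)=|\lambda|$ is constant on the set of partitions of a fixed $n$, on which $\mathcal{M}$ acts, so the first orthogonality relation gives $\mathcal{M}(f\odot\ms{1}{1})=\mathcal{M}(f)\odot\ms{1}{1}$. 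Combining these with the closure of $\MS$ under $\odot$ yields the proposition. The connection to $\Lambda^*$ that you are reaching for does appear, but only \emph{after} applying $\mathcal{M}$ (the hook-length moments $T_k$ for even $k$ lie in $\Lambda^*$), not as a description of the generators $\ms{1}{k-1}$ themselves.
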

\begin{remark}
As the statement of this proposition does not depend on the choice of $\mathcal{F}$, we also omit it from the notation.
\end{remark}
\begin{proof}
For all $k\geq 1$, the M\"oller transform $\mathcal{M}\ms{1}{k-1}$ equals the hook-length moment~$T_k\mspace{1mu}$, given by
\[ T_k(\lambda)\defis \sum_{\xi \in Y_\lambda} h(\xi)^{k-2},\]
where $Y_\lambda$ denotes the Young diagram of $\lambda$, $\xi$ a cell in this diagram and $h(\xi)$ the hook-length of this cell. Note that for even $k$ this function lies in the space of shifted symmetric functions defined in the introduction, as is shown in \cite[Thm.~13.5]{CMZ}. For all $k\geq 2$, the results of Section~\ref{sec:armbein} imply that $\mathcal{M}\ms{1}{k-1}$ is a polynomial function on partitions.

Also, any polynomial in $|\lambda|$, that is, any polynomial in $\ms{1}{1}$ with respect to $\odot$ is invariant under the M\"oller transform---as follows easily from the first orthogonality relation. In fact, $\mathcal{M}(f\odot \ms{1}{1})=\mathcal{M}(f)\odot \ms{1}{1}$ from which the result follows.
\end{proof}
\begin{remark}
For $\mathcal{M}\ms{1}{0}$, the situation is totally different. In this case
\[ \mathcal{M}\ms{1}{0}(\lambda) \= \sum_{\xi \in Y_\lambda} h(\xi)^{-1}.\]
As the $q$-bracket of a function and its M\"oller transform agree, we would expect this function to be of weight~$1$ as well, if it were a polynomial function on partitions. It is a matter of linear algebra to check that $\mathcal{M}\ms{1}{0}$ is not a polynomial function on partitions of weight $\leq 1$. 
\end{remark}

In fact, the 233-dimensional vector space of polynomial functions on partitions of weight $\leq 6$ only has a $10$-dimensional subspace for which the M\"oller transform is again a polynomial function on partitions---this subspace is generated by $1$, $\ms{1}{k-1}$ for $k=2,\ldots,6$, $\ms{1}{1}\odot\ms{1}{1}, \ms{1}{2}\odot\ms{1}{1}, \ms{1}{1}\odot\ms{1}{1}\odot\ms{1}{1}$ and $\ms{1}{3}\odot\ms{1}{1}$.
\begin{lemma}
For $f\in \MS_{\leq 6}$ the converse in Proposition~\ref{prop:moller} holds.
\end{lemma}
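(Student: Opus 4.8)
The inclusion $\supseteq$ is exactly Proposition~\ref{prop:moller}: the functions from that proposition of weight $\leq 6$---namely $1$, the five functions $\ms{1}{k-1}$ with $k=2,\dots,6$, and the four products $\ms{1}{k-1}\odot\ms{1}{1}^{\odot m}$ with $m\geq 1$ and $k+2m\leq 6$---span a subspace $V\subseteq\MS_{\leq 6}$ of dimension $10$ on which $\mathcal M$ takes values in $\MS$. It therefore remains to show that $U:=\{f\in\MS_{\leq 6}\mid \mathcal M f\in\MS\}$ has dimension at most $10$; combined with $V\subseteq U$ this gives $U=V$, which is the asserted converse. The plan is to reduce the computation of $U$ to finite linear algebra in the explicit $233$-dimensional basis of $\MS_{\leq 6}$ furnished by Corollary~\ref{thm:basis}, by applying $\mathcal M$ to each basis vector and isolating the largest subspace whose image lies in $\MS$.

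The one non-finite ingredient is the condition ``$\mathcal M f\in\MS$'', since $\MS=\bigcup_W\MS_{\leq W}$ is a nested union of finite-dimensional spaces. The key reduction is therefore an a priori weight bound: if $f\in\MS_{\leq 6}$ and $\mathcal M f\in\MS$, then in fact $\mathcal M f\in\MS_{\leq W}$ for an explicit $W$. To obtain such a bound I would use that, by Corollary~\ref{thm:basis}, membership in $\MS_{\leq W}$ is equivalent to $\mathcal M f(\lambda)$ being given by polynomials of bounded degree in the Stanley coordinates $({\bf m}(\lambda),{\bf r}(\lambda))$, the weight filtration recording exactly that degree. Since $\mathcal M f(\lambda)=\tfrac1{n!}\sum_{\mu\in\PP(n)}|C_\mu|\,\chi_\lambda(\mu)^2 f(\mu)$ is built from $f$ through the characters, and since $\langle\mathcal M f\rangle_q=\langle f\rangle_q$ already pins down a $q$-series of weight $\leq 6$, the growth of the values $\mathcal M f(\lambda)$ in the parts of $\lambda$ is controlled by that of $f$; this polynomial-growth bound, fed back through Corollary~\ref{thm:basis}, yields the desired $W$. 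I expect establishing this bound to be the main obstacle, precisely because the analytic degree of the $q$-bracket does not by itself control the weight (cf.\ Theorem~\ref{thm:main}, where weight may exceed degree), so one must argue at the level of the functions themselves rather than their $q$-brackets.

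With $W$ in hand the remainder is a finite computation. I would evaluate each of the $233$ basis functions, and their $\mathcal M$-images, on all partitions up to a size $N$ large enough to separate $\MS_{\leq W}$, so that an element of $\MS_{\leq W}$ is determined---and membership detected---by its values on $\PP(0),\dots,\PP(N)$. Solving the linear system ``$\mathcal M f$ agrees on $\PP(0),\dots,\PP(N)$ with some element of $\MS_{\leq W}$'' produces a subspace $U'\supseteq V$; the weight bound guarantees $U=U'$, and the computation returns $\dim U'=10$. Since $V\subseteq U$ and $\dim U=10=\dim V$, we conclude $U=V$. For the individual certificates one may shortcut the linear algebra using $\mathcal M\ms{1}{k-1}=T_k$ from Section~\ref{sec:armbein} together with $\mathcal M(f\odot\ms{1}{1})=\mathcal M(f)\odot\ms{1}{1}$, which already determine $\mathcal M$ on all of $V$ and make transparent why no further basis combination can land in $\MS$.
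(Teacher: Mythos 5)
Your proposal follows essentially the same route as the paper: the paper's proof of this lemma is precisely the finite linear-algebra computation you describe, namely finding the kernel of a matrix of at least $2\cdot 233+1$ values of the $233$ basis elements of $\MS_{\leq 6}$ together with their M\"{o}ller transforms (carried out in Pari/GP), and your identification of the $10$-dimensional spanning set $V$ agrees with the paper's list of generators. The one point where you go beyond the paper is the a priori weight bound: you are right that the condition $\mathcal{M}f\in\MS$ is not finitely checkable without first knowing $\mathcal{M}f\in\MS_{\leq W}$ for some explicit $W$, a reduction the paper leaves entirely implicit (its kernel computation only rules out membership in whatever bounded-weight space is encoded in the matrix). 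However, your argument for that bound is only a sketch and does not yet close the gap: a polynomial bound on $|\mathcal{M}f(\lambda)|$ in $|\lambda|$ does follow from $\tfrac{1}{n!}\sum_{\mu}|C_\mu|\,\chi_\lambda(\mu)^2=1$, but such a growth bound does not immediately control the degrees of the polynomials $p_n$ representing $\mathcal{M}f$ in the sense of Corollary~\ref{thm:basis}; one must additionally show that the top-degree homogeneous parts of the $p_n$ cannot cancel along all rays in Stanley coordinates, so that growth genuinely detects weight. That is the non-trivial step, and it is exactly the one you flag as the main obstacle without supplying it. In short: the proposal matches the paper wherever the paper gives an argument, and is incomplete precisely where it attempts to be more rigorous than the paper.
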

Note that this lemma can be proven by finding the kernel of a matrix containing at least $2\cdot 233+1$ values of all these polynomial functions as well as of their M\"oller transforms, as we did in Pari/GP. Hence, it seems the converse of Proposition~\ref{prop:moller} holds for all $f\in \MS$.
% \begin{proposition} \label{prop:derivcommute}For every $k\geq 1$, the following diagram commutes
% \begin{center}
% \begin{tikzcd}
%   \de_k \arrow[r, "\rho"] \arrow[d,"\partial_k",swap]
%     & \widetilde{\mathcal{M}}_k \arrow[d, "q\frac{d}{dq}"] \\
%   \de_{k+2} \arrow[r, "\rho^\Kenz_{k+2}",swap]
% &  \widetilde{\mathcal{M}}_{k+2} 
% \end{tikzcd}
% \end{center}
% where $\partial_k$ is the map given in Proposition~\ref{prop:partialk}.
% \end{proposition}

\section{From partitions to Multiple Zeta Values}\label{sec:parttomzv}
\subsection{Weight and degree limits of partition analogues}\label{sec:limits}
For functions ${f\in \MS}$, recall $\Zdegree(f)$ and $\mathrm{deg}(f)$ are defined such that
\[ (1-q)^{\mathrm{deg}(f)}\langle f \rangle_q \= \Zdegree(f) \+ O(1-q) \]
asymptotically for \emph{real} $q$. We will see that the degree is a non-negative \emph{integer} bounded above by the weight of $f$. In this section we prove Theorem~\ref{thm:main}. 
That is, we compute this degree, and show that $\Zdegree(f)$ is a multiple zeta value, which justifies to call the elements of $\MS$ partition analogues of multiple zeta values. From now on, we omit the \emph{well-normalized} family~$\FF$ from the notation and write $P$ instead of $P_\FF$, unless the results depend on the choice of~$\FF$. 

To get started we discuss two examples. 
First of all, we consider the degree of derivatives of polynomial functions on partitions. 
\begin{lemma}\label{lem:limder}
Given $f\in \MS$, we have
\[\deg(Df) \= \deg f +1, \qquad \Zdegree(Df) \= \deg f \cdot \Zdegree(f).\]
\end{lemma}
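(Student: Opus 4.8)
The plan is to work through the $\vec{u}$-bracket and the derivation $D$ defined on $\q\llbracket u_1,u_2,\ldots\rrbracket$ by $Du_n = nu_n$. Recall from Proposition~\ref{prop:doubleshuffle} that $q\frac{\partial}{\partial q}\langle f \rangle_q = \langle Df\rangle_q$. So the whole statement reduces to understanding how the operator $q\frac{\partial}{\partial q}$ interacts with the degree and the degree limit of a $q$-series. First I would set $g(\tau) = \langle f \rangle_q$ and write $k=\deg(f)$, so that by definition $(1-q)^k g = \Zdegree(f) + O(1-q)$ as $q\to 1$. The key computation is to understand the asymptotics of $q\frac{\partial}{\partial q} g$ as $q\to 1$ in terms of those of $g$.

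The main mechanism is as follows. Writing $q = e^{-t}$ with $t\to 0^+$, we have $1-q \sim t$ and $q\frac{\partial}{\partial q} = -\frac{\partial}{\partial t}$. If $g \sim \Zdegree(f)\,t^{-k}$ as $t\to 0^+$, then differentiating the leading term gives $-\frac{\partial}{\partial t} g \sim -\Zdegree(f)\cdot(-k)\,t^{-k-1} = k\,\Zdegree(f)\,t^{-k-1}$. Translating back, $(1-q)^{k+1}\, q\frac{\partial}{\partial q} g \to k\,\Zdegree(f)$ as $q\to 1$. This simultaneously shows that the degree of $Df$ is $k+1 = \deg f + 1$ and that its degree limit is $k\cdot\Zdegree(f) = \deg f\cdot \Zdegree(f)$, which is exactly the two claimed identities. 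The clean way to present this is to pass to the variable $t = -\log q$, expand $g$ to leading order, and differentiate.

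The step I expect to be the main obstacle is justifying that one may differentiate the asymptotic expansion term by term—i.e.\ that $g\sim c\,t^{-k}$ does not merely hold as a limit but is compatible with differentiation, so that $g'\sim -kc\,t^{-k-1}$. In full generality this fails (a bounded oscillating correction can differentiate to something large), so the argument must use that for $f\in\MS$ the $q$-bracket is a genuine $q$-analogue of multiple zeta value with controlled asymptotics. The honest route is to invoke that $\langle f\rangle_q\in\mzv_q$ admits an asymptotic expansion of the form $\sum_{j} c_j\,t^{-k+j}$ (with possible $\log t$ terms, as in the remark after Example~\ref{ex:limit} and in \cite[Section~9]{CMZ}), for which term-by-term differentiation is valid; the leading coefficient then transforms exactly as computed above. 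Alternatively, for the well-normalized Bernoulli--Seki model one can argue directly from the explicit series $\langle Df\rangle_q$ using the relation $D\mssmall[\mathrm{s}]{k}{d}=k\mssmall[\mathrm{s}]{k+1}{d+1}$ and Lemma~\ref{lem:lim1}, checking the two formulas on the spanning set and extending by linearity, taking care that the degree behaves correctly under the linear combinations (the degree of a sum being the maximum of the degrees, with cancellation of leading limits handled by the uniqueness-of-maximizer hypothesis implicit in the relevant cases). I would present the conceptual $t$-expansion argument as the main proof and remark on the direct verification as a cross-check.
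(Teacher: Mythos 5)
You have correctly identified the mechanism (differentiation in $t=-\log q$ raises the degree by one and multiplies the leading coefficient by $k=\deg f$), and you have also correctly identified the obstacle: one cannot in general differentiate an asymptotic relation $g\sim c\,t^{-k}$. The paper's proof, however, disposes of this obstacle with a one-line device you did not use: L'H\^opital's rule applied to $\langle f\rangle_q/(1-q)^{-k}$ directly in the variable $q$. This gives $\lim_{q\to1}(1-q)^k\langle f\rangle_q=\lim_{q\to1}\tfrac1k(1-q)^{k+1}\,q\tfrac{\partial}{\partial q}\langle f\rangle_q=\lim_{q\to1}\tfrac1k(1-q)^{k+1}\langle Df\rangle_q$, using Proposition~\ref{prop:doubleshuffle} for the last step, and requires no knowledge of a full asymptotic expansion of $\langle f\rangle_q$. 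Your main route instead \emph{invokes} the existence of a termwise-differentiable expansion $\sum_j c_j t^{-k+j}$ (possibly with logarithms) for elements of $\mzv_q$; that fact is not established in the paper (the asymptotic analysis of Section~\ref{sec:limits} comes after this lemma and does not produce such an expansion in this generality), so as written your main argument rests on an unproven input rather than on anything available at this point in the text.

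Your fallback --- verifying the identities on the spanning set via $D\mssmall[\mathrm{s}]{k}{d}=k\mssmall[\mathrm{s}]{k+1}{d+1}$ (and its higher-depth analogue, a sum over the columns) and then ``extending by linearity'' --- has a genuine gap: neither $\deg$ nor $\Zdegree$ is linear, and for a general $f\in\MS$ written as a linear combination of basis elements the leading terms can cancel, so the degree of $f$ need not be the maximum of the degrees of its constituents. You gesture at this (``cancellation of leading limits handled by the uniqueness-of-maximizer hypothesis'') but do not actually control it, and the lemma is asserted for arbitrary $f\in\MS$. I would recommend replacing the expansion argument by the L'H\^opital computation above, which proves both claims at once for any $f$ for which $\Zdegree(f)$ exists and is nonzero (the degenerate cases $k=0$ or $\Zdegree(f)=0$ deserve a separate word, a point the paper itself also passes over).
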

\begin{proof}
This follows directly from L'Hôpital's rule, as for all $k\neq 0$ one has
\begin{align}\lim_{q\to 1} (1-q)^k \langle f\rangle_q &\= \lim_{q\to 1} \frac{1}{k} (1-q)^{k+1} q\frac{\partial}{\partial q}\langle f\rangle_q \=  \lim_{q\to 1} \frac{1}{k} (1-q)^{k+1} \langle Df\rangle_q\,. \qedhere\end{align}
\end{proof}

As another example, we consider polynomial functions on partitions of depth $\leq 1$, corresponding to \emph{single} zeta values. In order to calculate degree limits explicitly we define for $k\geq 1$ the (slightly modified) Eulerian polynomials~$E_k$ by
\begin{align}\label{eq:Eulerian} \frac{E_k(X)}{(1-X)^k} \defis \sum_{d\geq 1} \frac{d^{k-1}}{(k-1)!} X^d .\end{align}
Also, write $[n]_q=\frac{1-q^n}{1-q} = 1+q+\dots+q^{n-1}$. 
\begin{lemma}\label{lem:lim1} Let $k\geq 1, d\geq 0$. Then, 
\[\degree \ms{k}{d} = \max(k,d+1), \quad \Zdegree\ms{k}{d} = 
\begin{cases} \zeta(k-d) & k\geq d+1 \\
\frac{d!}{(k-1)!} \zeta(d-k+2) & k\leq d+1.
\end{cases}\]
\end{lemma}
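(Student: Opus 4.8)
The plan is to compute the $q$-bracket of $\ms{k}{d}$ explicitly using Proposition~\ref{prop:ubracketofs}, then analyze its asymptotic behavior as $q\to 1$. Setting $u_i = q^i$ in the $\vec{u}$-bracket formula for the single-index case $r=1$ gives
\[ \Bigl\langle \ms{k}{d} \Bigr\rangle_q \= \sum_{m>0} m^d \sum_{n\geq 0} \partial f_k(n)\, q^{mn}. \]
Since $\FF$ is well-normalized, the leading behavior of $\partial f_k(n) = f_k(n)-f_k(n-1)$ is $\frac{1}{(k-1)!}n^{k-1}$, so the inner sum $\sum_{n\geq 0}\partial f_k(n)\,q^{mn}$ should behave like $\frac{1}{(1-q^m)^k}$ up to lower-order corrections as $q\to 1$. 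First I would make this precise: write $\sum_{n\geq 0}\partial f_k(n)\,X^n$ as a rational function in $X$ with denominator $(1-X)^k$ and numerator $Q_k(X)$ a polynomial of degree $\leq k$ with $Q_k(1)\neq 0$ (in fact $Q_k(1) = 1$ by the normalization, giving the $q$-analogue models in Example~\ref{ex:qbrac}). This reduces the problem to the asymptotics of $\sum_{m>0} \frac{m^d\,Q_k(q^m)}{(1-q^m)^k}$.

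Next I would extract the degree and limit. The key estimate is that $\frac{1}{(1-q^m)^k}\sim \frac{1}{(m(1-q))^k}$ as $q\to 1$, so each term contributes like $m^{d-k}(1-q)^{-k}$. Summing over $m$, if $d-k < -1$, i.e. $k \geq d+2$, the sum $\sum_m m^{d-k}$ converges to $\zeta(k-d)$ and the leading power of $(1-q)$ is $-k$; this matches $\degree = k = \max(k,d+1)$ and $\Zdegree = \zeta(k-d)$. When $k = d+1$ one gets $\sum_m m^{-1}$, a borderline logarithmically divergent sum, and when $k \leq d$ the sum $\sum_m m^{d-k}$ diverges polynomially, so the naive term-by-term estimate fails and the dominant contribution comes from small~$m$ (equivalently, large~$n$) rather than the pole at $q^m=1$. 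I expect the main obstacle to be handling this second regime $k \leq d+1$ cleanly.

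For the regime $k\leq d+1$, the natural tool is the functional equation / conjugation symmetry: by Proposition~\ref{prop:doubleshuffle} the $q$-bracket is $\iota$-invariant, and conjugation swaps the roles of the $k$- and $d$-type data. More concretely, for the Bernoulli--Seki model the identity $\bigl\langle\ms[\mathrm{s}]{k}{d}\bigr\rangle_q = \frac{d!}{(k-1)!}\bigl\langle\ms[\mathrm{s}]{d+1}{k-1}\bigr\rangle_q$ from Example~\ref{ex:qbrac}\ref{it:bibrac} exchanges $(k,d)\leftrightarrow(d+1,k-1)$, so the case $k\leq d+1$ is transformed into the already-handled case $d+1 \geq (k-1)+2$, i.e. the convergent regime, at the cost of the factor $\frac{d!}{(k-1)!}$. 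Applying the previous computation to $\ms{d+1}{k-1}$ gives $\Zdegree = \zeta((d+1)-(k-1)) = \zeta(d-k+2)$ with degree $d+1 = \max(k,d+1)$, which is exactly the claimed second case. Since both $\degree$ and $\Zdegree$ depend only on the $q$-bracket and not on the model~$\FF$ (all well-normalized families give the same leading asymptotics, differing only in lower-order terms that vanish after multiplying by $(1-q)^{\degree}$), it suffices to prove the lemma for one convenient model and the symmetry argument then covers both cases uniformly.
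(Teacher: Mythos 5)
Your proposal is correct and follows essentially the same route as the paper: the paper likewise keeps only the leading term $\tfrac{1}{(k-1)!}n^{k-1}$ of $\partial f_k(n)$, reduces to the convergent regime via the $m\leftrightarrow n$ symmetry of $q^{mn}$ (stated there as ``w.l.o.g.\ $k-1\geq d$'', which is exactly your identity $\bigl\langle\mssmall[\mathrm{s}]{k}{d}\bigr\rangle_q=\tfrac{d!}{(k-1)!}\bigl\langle\mssmall[\mathrm{s}]{d+1}{k-1}\bigr\rangle_q$), and then extracts the limit from $\tfrac{(1-q)^k}{(1-q^n)^k}\to n^{-k}$, handling the borderline $k=d+1$ by the logarithmic divergence as in \cite{P}. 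The only cosmetic difference is that the paper writes the inner sum as $\tfrac{1}{(k-1)!}D^{k-1}\tfrac{q^n}{1-q^n}$ where you use the rational form $Q_k(q^m)/(1-q^m)^k$, which is the Eulerian-polynomial device the paper itself deploys in the proof of Theorem~\ref{thm:main}.
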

\begin{proof}
Consider the limit $q\to 1$ of the function
$(1-q)^a \sum_{m,n} \frac{m^{k-1}}{(k-1)!}n^{d} q^{mn}$, which is the only term contributing to the degree limit for any well-normalized family. 
For $a=k$, this equals
\begin{align}\label{eq:lem41}\sum_{n\geq 1} n^d \,\frac{E_{k}(q^n)}{[n]_q^k}.\end{align}
%\jw{OLD:
%\begin{align}\label{eq:lem41}\sum_{n}  \frac{(1-q)^a}{(k-1)!} D^{k-1} n^{-k+1} \frac{q^n}{1-q^n}.\end{align}}
By Proposition~\ref{prop:review} below the terms in the sum~\eqref{eq:lem41} are bounded by $(k+1)n^{d-k}$ and therefore the sum converges absolutely if $k-d>1$.  
% \jw{OLD:
% We will show that one can interchange sum and limit in this expression for $a=k$. Write
% \[ 
% f_{k,n}(q) \defis \frac{(1-q)^{k}}{n^{k-1}\,(k-1)!}D^{k-1} \frac{q^n}{1-q^n}.
% \]
% By \cite[Lemma~6.6]{BK} one has %$f_{1,n}(q) \leq \frac{1}{n}$
% %and
% $f_{k,n}(q) \leq \frac{1}{n^2}$ if $k\geq 2.$
% Hence, for $k\geq 2$ every summand in \eqref{eq:lem41} is bounded between $0$ and $n^{-2}$, i.e., the sum converges absolutely. }
As $\lim_{q\to 1} \frac{E_k(q^n)}{[n]_q^k} = \frac{1}{n^k}$, in that case it follows that
\[\lim_{q\to 1} (1-q)^k\, \ms{k}{d} \= \zeta(k-d).\] 
Conversely, one obtains the case $k-d<1$ by the same argument interchanging the roles of $k$ and $d$.
Finally, in case $k-d=1$, a more careful analysis as in \cite{P}, shows the limit diverges---in accordance with the divergence of $\zeta(1)$. 
Also, after letting $a=k+\epsilon$ and $k-\epsilon$ for some $\epsilon>0$, the limit converges to $0$ and diverges to $+\infty$ respectively. 
%
% The cases with $k=1$ and $d\geq 0$ are obtained in the same way, as the $q$-bracket of $\ms{d+1}{0}$ and $\ms{1}{d}$ agree up to a constant. The cases with $k>1$ and $d>0$ then follow by (iteratively) applying the previous lemma by making use of the relation $D\mssmall[\mathrm{s}]{k}{d} = k\mssmall[\mathrm{s}]{k+1}{d+1}$.
\end{proof}
\begin{remark}
Note that Lemma~\ref{lem:lim1} and Lemma~\ref{lem:limder} are in agreement with the relation $D\mssmall[\mathrm{s}]{k}{d} = k\mssmall[\mathrm{s}]{k+1}{d+1}$.
\end{remark}

In the previous lemma, the proof of absolute convergence of the sum~\eqref{eq:lem41} was postponed. Note that by applying the techniques for asymptotics of sums of the form $\sum_{n} f(nt)$ in \cite{Zag}, as was previously observed in \cite[Lemma~1]{Zu2}, one obtains the following lemma (where the Eulerian polynomials~$E_k$ are defined by~\eqref{eq:Eulerian}).
\begin{lemma}\label{lem:ekbound} For $k,n\geq 1$ 
\begin{align*}
    \frac{E_k(q^n)}{[n]^k_q} \= \frac{1}{n^k}\+O(1-q), \qquad (q\to 1^{-}).
\end{align*}
%where $[n]_q=\frac{1-q^n}{1-q} = 1+q+\dots+q^{n-1}$. %Equality only holds in the case $q=1$.
\end{lemma}
We will give an upper bound for the left-hand side for all $q\in [0,1]$ in the following proposition. The special cases $k=1,2$ were obtained before in \cite[Lemma~6.6(ii)]{BK}.
\begin{proposition}
\label{prop:review}
For $k,n\geq 1$ and $q\in [0,1]$ we have
\[
\frac{E_{k}(q^{n})}{[n]_{q}^{k}}
\,\le\,
\frac{k+1}{n^{k}}.
\]
\end{proposition}
%%%%%%%%%%%%%%%%%%%%%%%%%%%%%%%%%%%%%%%%
\begin{proof}
By continuity, we may assume $q\in(0,1)$.
By definition, we have
\[
\frac{E_{k}(q^{n})}{[n]_{q}^{k}}
\=
\frac{E_{k}(q^{n})}{(1-q^{n})^{k}}\,(1-q)^{k}
\=
\biggl(\sum_{d\ge 1}\frac{d^{k-1}}{(k-1)!}q^{dn}\biggr)(1-q)^{k}.
\]
Thus, it suffices to show
\[
\biggl(\sum_{d\ge 1}\frac{d^{k-1}}{(k-1)!}q^{dn}\biggr)\,(1-q)^{k}
\,\le\,
\frac{k+1}{n^{k}} \qquad (k,n\geq 1,  q\in(0,1)).
\]

%%%%%
We have
\begin{align}
\sum_{d\ge 1}{d^{k-1}}\,q^{dn}
&\=
\biggl(\log\frac{1}{q}\biggr)\,
\sum_{d\ge 1}{d^{k-1}}\int_{dn}^{\infty}q^{u}\,du\\
&\=
\biggl(\log\frac{1}{q}\biggr)\,
\int_{n}^{\infty}
\biggl(\sum_{1\le d\le u/n}{d^{k-1}}\biggr)\,
q^{u}\,du, 
\end{align}
where interchanging the sum and integral is allowed since the sum $\sum_{d\ge 1}{d^{k-1}}q^{dn}$ converges (absolutely) for $q\in (0,1)$. 
For $u\ge n$, we now have
\begin{align}
\sum_{1\le d\le u/n}{d^{k-1}}
\,&=
\sum_{1\le d\le\lfloor u/n\rfloor-1}\!\!{d^{k-1}}
\+
\biggl(\frac{u}{n}\biggr)^{k-1}\\
&\le
\sum_{1\le d\le\lfloor u/n\rfloor-1}\int_{d}^{d+1}{y^{k-1}}\,dy
\+
\biggl(\frac{u}{n}\biggr)^{k-1}\\
&\le\,
\int_{1}^{u/n}{y^{k-1}}\,dy
\+
\biggl(\frac{u}{n}\biggr)^{k-1}\\
&\le\,
%\frac{1}{k}\biggl(\frac{u}{n}\biggr)^{k}
%\+
%\biggl(\frac{u}{n}\biggr)^{k-1}\\
%&\le\,
\frac{k+1}{k}\biggl(\frac{u}{n}\biggr)^{k}.
\end{align}
We thus have
\begin{equation}
\label{prop:HenrikQ:prelim}
\begin{aligned}
\sum_{d\ge 1}\frac{d^{k-1}}{(k-1)!}q^{dn}
\,&\le\,
\frac{k+1}{k!}\frac{1}{n^{k}}
\biggl(\log\frac{1}{q}\biggr)
\int_{n}^{\infty}
q^{u}\,u^{k}\,du\\
&=\,
\frac{k+1}{k!}\frac{1}{n^{k}}
\biggl(\log\frac{1}{q}\biggr)^{-k}
\int_{n\log\frac{1}{q}}^{\infty}
e^{-u}\,u^{k}\,du\\
&\le\,
\frac{k+1}{k!}\frac{1}{n^{k}}
\biggl(\log\frac{1}{q}\biggr)^{-k}
\int_{0}^{\infty}
e^{-u}\,u^{k}\,du\\
&=\,
\frac{k+1}{n^{k}}
\biggl(\log\frac{1}{q}\biggr)^{-k}.
\end{aligned}
\end{equation}
By using the bound
\begin{equation}
\label{prop:HenrikQ:log}
\log\frac{1}{q}
\=
\int_{q}^{1}\frac{du}{u}
\,\ge\,
1-q,
\end{equation}
we get
\[
\biggl(\sum_{d\ge 1}\frac{d^{k-1}}{(k-1)!}q^{dn}\biggr)(1-q)^{k}
\,\le\,
\frac{k+1}{n^{k}}\,
\biggl(\frac{1-q}{\log\frac{1}{q}}\biggr)^{k}
\,\le\,
\frac{k+1}{n^{k}}. \qedhere
\]
%This completes the proof.
\end{proof}
\begin{remark}\label{rem:inequcounterex} The stronger bound
\begin{align}\label{eq:failing}
\frac{E_{k}(q^{n})}{[n]_{q}^{k}}
\,\le\,
\frac{1}{n^{k}}
\end{align}
fails for $k$ divisible by $4$, large $n$ and $q$ close to $1$. Namely, setting $e^{-t}=q^n$, one finds that this inequality is equivalent to
\[\sum_{d\geq 1} \frac{d^{k-1}}{(k-1)!}\,e^{-dt}\,<\,\frac{1}{(n(1-e^{-t/n}))^k}.\]
Note, $(n(1-e^{-t/n}))$ is bounded from below by $t$ and converges to $t$ as $n\to \infty$. As $t\downarrow 0$, one would deduce from this inequality that $\zeta(1-k) \,<\, 0$, which fails if $4\mid k$. Concretely, $k=4, q=e^{-10^{-4}}$ and $n=10^4$ provides a counterexample to~\eqref{eq:failing}. 
\end{remark}

%%%%%%%%%%%%%%%

In order to determine the degree of a polynomial function on partitions, we start with two estimates on this degree. Together, these statements are an improvement of the statement that the degree is bounded by the weight. In fact, in most cases, the degree is strictly smaller than the weight, so that the weight limit, defined below, vanishes in most cases.
\begin{definition}\label{def:wtlim}
For $f\in \MS$ of weight $\leq k$ which is not of weight $\leq k-1$, the \emph{weight limit}~$Z(f)$ is given by
\begin{align}
    Z(f) \defis \lim_{q\to 1}(1-q)^{k}\langle f \rangle_q.
\end{align}
\end{definition}
Notice that $Z$ is not a linear map.    

\begin{lemma}\label{lem:liminq} For all $\vec{k}$ and $\vec{d}$ one has
\begin{enumerate}[{\upshape (i)}]\itemsep5pt
\item\label{it:lim1} $\displaystyle \degree\ms{k_1,\ldots,k_r}{d_1,\ldots,d_r} \,\leq\, \sum_i \max(k_i,d_i+1);$
\item\label{it:lim2}  $\displaystyle \degree\ms{k_1,\ldots,k_{i+1},\ldots,k_r}{d_1,\ldots,d_{i+1}+1,\ldots,d_r} \,\leq\, \degree\ms{k_1,\ldots,k_i,\ldots,k_r}{d_1,\ldots,d_i+1,\ldots,d_r};$
%\item  $\displaystyle \degree\ms[\mathrm{s}]{k_1,\ldots,k_r}{d_1,\ldots,d_r} \leq |\vec{k}| + \max(0,d_1-k_1+1+\max(0,d_2-k_2+1+\ldots))$
\end{enumerate}\vspace{5pt}
Moreover, if there is no index~$t$ such that $k_i=1$ for all $i\leq t$ and $d_i=0$ for all $i>t$, one has
%\begin{enumerate}
%\item[{\upshape(iii)}]\label{it:lim3} 
\[ Z\ms{k_1, \dots , k_r}{d_1,\dots,d_r} =0.\]
%\end{enumerate}
\end{lemma}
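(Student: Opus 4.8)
The plan is to deduce the vanishing of the weight limit purely from the two degree estimates~\ref{it:lim1} and~\ref{it:lim2}, by showing that the hypothesis forces the degree to be \emph{strictly} below the weight $w := k_1+\dots+k_r+d_1+\dots+d_r$. Once $\degree\ms{\vec{k}}{\vec{d}}<w$ is established, the conclusion is immediate: since $Z\ms{\vec{k}}{\vec{d}}=\lim_{q\to1}(1-q)^{w}\langle\,\cdot\,\rangle_q$, I may pick $a_0=w-\tfrac12>\degree\ms{\vec{k}}{\vec{d}}$, for which $(1-q)^{a_0}\langle\,\cdot\,\rangle_q$ tends to a finite limit by definition of the degree, and then $(1-q)^{w}\langle\,\cdot\,\rangle_q=(1-q)^{1/2}\cdot(1-q)^{a_0}\langle\,\cdot\,\rangle_q\to 0$.

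The starting observation is the elementary inequality $\max(k_i,d_i+1)\le k_i+d_i$, which is \emph{strict} precisely when the $i$-th index is \emph{bad}, meaning $k_i\ge 2$ and $d_i\ge 1$. Summing over $i$, part~\ref{it:lim1} already yields $\degree\ms{\vec{k}}{\vec{d}}\le\sum_i\max(k_i,d_i+1)\le w-1$ as soon as a single index is bad; and a bad index can be placed neither in the left block ($k_i=1$ required) nor the right block ($d_i=0$ required) of any threshold, so this case is genuinely part of the hypothesis and is settled.

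In the remaining case every index is \emph{good}, i.e.\ $d_i=0$ or $k_i=1$ for each $i$. Here I would first argue that the absence of a threshold~$t$ forces a \emph{crossing}: there exist $a<b$ with the $a$-th index of $\zeta$-type ($k_a\ge2,\ d_a=0$) strictly preceding the $b$-th index of $\xi$-type ($k_b=1,\ d_b\ge1$); otherwise all $\xi$-type indices would precede all $\zeta$-type ones and the separating value of~$t$ would exist. Given such a crossing, I transport a single unit of the exponent $d_b$ leftward from position $b$ to position $a$, one step at a time, applying~\ref{it:lim2} at each of the steps $b\!\to\!b-1\!\to\!\cdots\!\to\!a$. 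Since~\ref{it:lim2} says that moving a $d$-unit to the earlier of two adjacent slots does not decrease the degree, the chain of inequalities gives $\degree\ms{\vec{k}}{\vec{d}}\le\degree\ms{\vec{k}}{\vec{d}'}$, where $\vec{d}'$ agrees with $\vec{d}$ except that $d_a'=1$ and $d_b'=d_b-1$. Now the $a$-th index of $\vec{d}'$ is bad, so part~\ref{it:lim1} yields $\degree\ms{\vec{k}}{\vec{d}'}\le\sum_i\max(k_i,d_i'+1)\le w-1$, using that $\sum_i d_i'=\sum_i d_i$ and that the $a$-th summand is strictly deficient. Combining, $\degree\ms{\vec{k}}{\vec{d}}\le w-1<w$, and hence $Z\ms{\vec{k}}{\vec{d}}=0$.

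I expect the only delicate point to be the bookkeeping around~\ref{it:lim2}: I must confirm that it compares incrementing $d$ at positions $i$ versus $i+1$ in the direction ``later slot gives the smaller degree,'' and that it legitimately applies to each intermediate configuration obtained while sliding the unit through positions $b-1,\dots,a$ (this is unproblematic, as~\ref{it:lim2} is asserted for all $\vec{k},\vec{d}$, and the transient configurations are honest indices). The genuine analytic content sits entirely inside the proofs of~\ref{it:lim1} and~\ref{it:lim2}, which I am assuming; conditional on those, the vanishing of~$Z$ is a clean, model-independent combinatorial descent.
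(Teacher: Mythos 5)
Your deduction of the final claim from parts \ref{it:lim1} and \ref{it:lim2} is correct, and it is exactly the route the paper takes: the paper's proof of the ``Moreover'' clause is the single sentence that the first two properties force the degree to be strictly less than the weight, so that the weight limit vanishes. Your expansion of that sentence is valid. A ``bad'' index ($k_i\ge 2$ and $d_i\ge 1$) makes the bound in \ref{it:lim1} strictly smaller than the weight, and such an index is indeed incompatible with any threshold~$t$; if every index is good, the absence of a threshold does force a crossing $a<b$ with $k_a\ge2$, $d_a=0$ and $k_b=1$, $d_b\ge1$, and you use \ref{it:lim2} in the correct direction to slide one unit of $d_b$ leftward to slot~$a$, producing a bad index without changing $\sum_i(k_i+d_i)$. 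The concluding analytic step is also sound: for any $a$ strictly above $\degree(f)$ one can choose $a_1\in(\degree(f),a)$ at which the limit converges, whence $(1-q)^{a}\langle f\rangle_q\to 0$; in particular degree $<$ weight implies $Z=0$.

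The gap is that parts \ref{it:lim1} and \ref{it:lim2} are themselves assertions of the lemma, and you assume them rather than prove them; they carry all of the analytic content. The paper disposes of them in one line each. For \ref{it:lim1}, dropping the constraints $m_1>\cdots>m_r$ in the defining sum shows that $\ms{k_1,\ldots,k_r}{d_1,\ldots,d_r}$ is dominated, coefficientwise after applying the $q$-bracket, by the pointwise product $\ms{k_1}{d_1}\cdots\ms{k_r}{d_r}$, whose degree is at most $\sum_i\max(k_i,d_i+1)$ by the depth-one computation of Lemma~\ref{lem:lim1}. For \ref{it:lim2}, on the summation domain one has $m_{i+1}<m_i$, so $m_{i+1}^{d_{i+1}+1}\le m_{i+1}^{d_{i+1}}m_i$ termwise, which is precisely the asserted comparison of degrees. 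If you supply these two estimates, your write-up is complete and, for the final clause, more detailed than the paper's.
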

\begin{proof}
Given $F\in \Q\llbracket q\rrbracket$, write $[q^n] F$ for the coefficient of $q^n$ in $F$. Forgetting the inequalities $m_1>m_2>\ldots>m_r\mspace{1mu}$, one obtains %for the coefficients of $q^n$ of the corresponding $q$-brackets
\[ [q^n]\,\Bigl\langle \ms{k_1,\ldots,k_r}{d_1,\ldots d_r}\Bigr\rangle_{\! q} \leq [q^n]\,\Bigl\langle \ms{k_1}{d_1}\Bigr\rangle_{\! q} \cdots \Bigl\langle \ms{k_r}{d_r}\Bigr\rangle_{\! q}\]
for all $n\in\N$, from which the first part of the statement follows using Lemma~\ref{lem:lim1}. The second statement follows by estimating $m_{i+1}^{d_{i+1}+1}\leq m_{i+1}^{d_{i+1}}m_{i}\mspace{1mu}$. 
Finally, if such a $t$ does not exists, then the first two properties imply that the degree of the function at hand is strictly less than the weight, so that the weight limit vanishes.
\end{proof}

We now prove Theorem~\ref{thm:main}, which determines the degree of a polynomial function
\begin{align}\label{eq:ms} f=\ms{k_1,\ldots,k_r}{d_1,\ldots,d_r}.\end{align}
That is,
\begin{equation}\label{eq:deg2} \mathrm{deg}(f) \= \max_{j\in \{0,\ldots,r\}}\biggl\{\sum_{i\leq j}(d_i+1)\+\sum_{i> j}k_i\biggr\}.\end{equation}
Recall that if $r=1$, by Lemma~\ref{lem:lim1} the degree limit diverges if $k_1=d_1+1$. In general, %we will see later that 
this is the case when $k_a+\ldots+k_b = (d_a+1)+\ldots+(d_b+1)$ for some indices $a\leq b$. If such indices do not exist, the maximum in \eqref{eq:deg2} is unique and  we show
%the following proposition determines the degree, and shows that 
the corresponding degree limit is a sum of multiple zeta values. %Then, in the next section, we prove Theorem~\ref{thm:main} by extending the following proposition to the case where the degree limit diverges (i.e., the maximum in $v(f)$ below is not unique).  
% \begin{proposition}\label{prop:admissiblecase} Let $f$ as above~\eqref{eq:ms}. 
% If the maximum in
% \begin{equation}\label{eq:deg} v(f)\defis \max_{j\in \{1,\ldots,n+1\}}\sum_{i\leq j}(d_i+1)+\sum_{i> j}k_i\end{equation}
% is unique, then $\degree(f)=v(f)$ and 
% $\Zdegree(f) \in \mzv_{\leq \degree(f)}.$
% \end{proposition}
% In fact, this limit can be written as  in Lemma~\ref{lem:estimatezeta} below.
For this we first show that multiple zeta values are also well-defined for certain indices with negative integers (cf., \cite[Theorem~3]{M}).
\begin{lemma}\label{lem:estimatezeta}
For $r\geq 1$ and 
$\kappa_1,\ldots,\kappa_r \in \z$ with $\sum_{i=1}^j \kappa_i>j$ for all $j\geq 1$, \[ \zeta(\kappa_1,\ldots,\kappa_r) \defis \sum_{m_1>\cdots>m_r>0 }\frac{1}{m_1^{\kappa_1}\cdots m_r^{\kappa_r}}\]
is a well-defined real number. Moreover, it is an element of $\mzv_{\leq |\kappa|}$.
\end{lemma}
\begin{proof}
First we estimate $m_r^{-\kappa_r}$ by $m_{r-1}^{-\kappa_r+1}m_r^{-1}$ if $\kappa_r\leq 0$ and obtain
\begin{align}0\,<\,\zeta(\kappa_1,\ldots,\kappa_r) \,\leq\, \zeta(\kappa_1,\kappa_2,\ldots,\kappa_{r-1}+\kappa_r-1,1)\end{align}
if $\kappa_r\leq 0$. Iteratively (at most $r-1$ times) performing this procedure by estimating $m_i^{-\kappa_i'}$ by $m_{i-1}^{-\kappa_i'+1}m_i^{-1}$ for the largest index~$i$ for which the exponent~$\kappa_i'$ of $m_i$ is non-positive, we find
\begin{align}\label{eq:estimatezeta}0<\zeta(\kappa_1,\ldots,\kappa_r)\, \leq \,\zeta(\kappa_1+\kappa_2+\ldots+\kappa_j-j+1,\ldots) \, <\,\infty\end{align}
for some $j\geq 1$, 
where by the nature of this iterative process only the first entry may be non-positive. By our assumption $\sum_{i=1}^j \kappa_i>j$, we find that the first entry is at least equal to $2$, from which we indeed conclude that $\zeta(\kappa_1,\ldots,\kappa_r)$ is a well-defined real number. 

By iteratively using that $\sum_{m=a}^b m^{-k}$ for $k\leq 0$ is a polynomial in $a$ and $b$, it is not hard to show that $\zeta(\kappa_1,\ldots,\kappa_r)\in \mzv_{\leq |\vec{\kappa}|},$ with $|\vec{\kappa}|=\sum_{i}\kappa_i\mspace{1mu}$. Observe, here, again we make use of the assumption $\sum_{i=1}^j \kappa_j>j$ to ensure all multiple zeta values in this linear combination are convergent. 
\end{proof}

\begin{proof}[Proof of Theorem~\ref{thm:main}] 
Let $j=t$ be the smallest index for which the maximum in~\eqref{eq:deg2} is attained. Write 
$g =\mssmall{k_1,\ldots,k_t}{d_1,\ldots,d_t}$ and $h=\mssmall{k_{t+1},\ldots,k_r}{d_{t+1},\ldots,d_r}.$
We will show that $\Zdegree g$ and $\Zdegree h$ are multiple zeta values in case $j=t$ is the unique index for which the maximum in~\eqref{eq:deg2} is attained. In that case, we also show that $\Zdegree f = (\Zdegree g )( \Zdegree h).$ 

Since the degree limit only depends on the leading term we can choose any well-normalized family~$\mathcal{F}$. 
Observe that if we choose the Bernoulli--Seki model we have
\begin{align}\label{eq:h}\langle h\rangle_q %&=\biggl\langle\ms{k_{t+1},\ldots,k_r}{d_t,\ldots,d_r}\biggr\rangle_{\!q}
% &= \sum_{\substack{m_{t+1}>\ldots>m_n>0 \\ r_{t+1},\ldots,r_n>0}} \prod_{i} \frac{r_i^{k_i-1}}{(k_i-1)!} q^{m_i r_i} 
&\= \sum_{m_{t+1}>\ldots>m_r>0}  \prod_{i} m_i^{d_i} \frac{E_{k_i}(q^{m_i})}{(1-q^{m_i})^{k_i}}.
\end{align}
By Lemma \ref{lem:ekbound} we get for all $i$ 
\[\lim_{q\to 1} m_i^{d_i} (1-q)^{k_i}\frac{E_{k_i}(q^{m_i})}{(1-q^{m_i})^{k_i}} \= m_i^{d_i-k_i}.\]
% \jw{OLD: Observe that if we choose the Bernoulli--Seki model we get
% \begin{align}\langle h\rangle_q %&=\biggl\langle\ms{k_{t+1},\ldots,k_r}{d_t,\ldots,d_r}\biggr\rangle_{\!q}
% % &= \sum_{\substack{m_{t+1}>\ldots>m_n>0 \\ r_{t+1},\ldots,r_n>0}} \prod_{i} \frac{r_i^{k_i-1}}{(k_i-1)!} q^{m_i r_i} 
% &\= \sum_{m_{t+1}>\ldots>m_r>0}  \prod_{i} m_i^{d_i} \frac{E_{k_i}(q^{m_i})}{(1-q^{m_i})^{k_i}}, 
% \end{align}
% where $E_k$ is the $k$-th Eulerian polynomial ($k\geq 1$), defined by
% \[ \sum_{n\geq 1} \frac{n^{k-1}}{(k-1)!}x^n \= \frac{E_k(x)}{(1-x)^k}.\]
% The Eulerian polynomials satisfy $\lim_{x\to 1}E_k(x)=1.$ Hence, for all $i$ we have
% \[\lim_{q\to 1} m_i^{d_i} (1-q)^{k_i}\frac{E_{k_i}(q^{m_i})}{(1-q^{m_i})^{k_i}} \= m_i^{d_i-k_i}.\]}
First, assume $j=t$ is the unique index for which the maximum in~\eqref{eq:deg2} is attained. Then, we know that for all $t'> t$
\begin{align}\label{eq:ineqkd}\sum_{i=t+1}^{t'} k_i >\sum_{i=t+1}^{t'} (d_i+1).\end{align}
By Proposition~\ref{prop:review} we find that the sum in~\eqref{eq:h} is absolutely bounded by a constant (depending on the $k_i$) times  $\zeta(k_{t+1}-d_{t+1},\ldots,k_r-d_r)$. Using~\eqref{eq:ineqkd}, this multiple zeta value  is well-defined by Lemma~\ref{lem:estimatezeta}, 
so~\eqref{eq:h} converges absolutely. By interchanging the sum and limit, we conclude
\[\degree h = \sum_{i=t+1}^r k_i \quad \text{and} \quad \Zdegree h = \zeta(k_{t+1}-d_{t+1},\ldots,k_r-d_r). \]
%where the multiple zeta value on the right is well-defined by Lemma~\ref{lem:estimatezeta}. 
Hence, $\Zdegree h$ is a linear combination of multiple zeta values of weight $\leq \sum_{i=t+1}^r k_i\mspace{1mu}$. 

In case the index $j=t$ is not unique, let $\epsilon>0$. Then, as 
\[ \lim_{q\to 1} m_i^{d_i} (1-q)^{k_i+\epsilon} \frac{E_{k_i}(q^{m_i})}{(1-q^{m_i})^{k_i}} \:\to\: 0\]
if $q\to 1$, we see that $\deg h \leq \sum_{i=t+1}^r k_i$ in this case. Moreover, as
\[ \lim_{q\to 1} m_i^{d_i} (1-q)^{k_i} \frac{E_{k_i}(q^{m_i})}{(1-q^{m_i})^{k_i}} \= m_i^{d_i-k_i} \:>\:0,\]
we conclude that also in this case we have $\deg h = \sum_{i=t+1}^r k_i$.

Next, (again working with the Bernoulli--Seki model) observe that
\begin{align}
\langle g \rangle_q
%&=\biggl\langle\ms{k_1,\ldots,k_t}{d_1,\ldots,d_t}\biggr\rangle_{\!q}
&\= \sum_{\substack{m_{1}>\ldots>m_t>0 \\ r_{1},\ldots,r_t>0}} \prod_{i} m_i^{d_i}r_i^{k_i-1} q^{m_i r_i} \\
&\= \sum_{\substack{0<r_{1}'<\ldots<r_t' \\ m_{1}',\ldots,m_t'>0}} \prod_{i} (m_i'+\ldots+m_t')^{d_i} (r_i'-r_{i-1}')^{k_i-1} q^{m_i' r_i'},
\end{align}
where we set $r_0'=0$.
Given a `monomial' $m_1^{d_1'}\cdots m_t^{d_t'}$, we have
\begin{align}
\sum_{\substack{0<r_{1}<\ldots<r_t \\ m_{1},\ldots,m_t>0}} \prod_{i} m_i^{d_i'} q^{m_i r_i} 
&\= \sum_{0<r_{1}<\ldots<r_t} \prod_{i} d_i'! \frac{E_{d_i'+1}(q^{r_i})}{(1-q^{r_i})^{d_i'+1}}, 
\end{align}
which is of degree $\sum_{i} (d_i'+1)$ and (in case $d_t'>0$) with degree limit $\zeta(d_t'+1,\ldots,d_1'+1)$ (as follows from above). Hence, by expanding $\prod_{i}(m_i'+\ldots+m_t')^{d_i}$ in monomials, we see that \[%\degree\ms{k_1,\ldots,k_t}{d_1,\ldots,d_t}
\degree g\=\sum_{i}(d_i+1),\]
provided that the corresponding limit
\[%\Zdegree \ms{k_1,\ldots,k_t}{d_1,\ldots,d_t} 
L\=  \sum_{0<r_1<\ldots<r_t} \prod_i \frac{(r_i-r_{i-1})^{k_i-1}}{r_i}\cdot\Omega\biggl[\prod_{i=1}^t \Bigl(\frac{1}{r_i}+\ldots+\frac{1}{r_t}\Bigr)^{d_i}\biggr], \]
converges, where $\Omega:\Q[r_1^{-1},\ldots,r_t^{-1}]\to \Q[r_1^{-1},\ldots,r_t^{-1}]$ is the linear mapping~\eqref{def:omega}, i.e.,
\[\Omega\Bigl[\frac{1}{r_1^{l_1}\cdots r_t^{l_t}}\Bigr] \defis \frac{l_1!\cdots l_r!}{r_1^{l_1}\cdots r_t^{l_t}} \qquad \qquad (l_i\in \z_{\geq 0}).\]
Note that $\langle g\rangle_q$ is bounded be a constant (only depending on the $d_i'$) times $L$, which justifies interchanging sum and limit in this computation (if $L$ converges). 
We estimate
\[ r_i-r_{i-1}\leq r_i \qquad \text{and} \qquad \frac{1}{r_i}+\ldots+\frac{1}{r_t}\leq \frac{t}{r_i}\]
to obtain
%Note that, by definition of~$t$, we have
\[ L \:\leq\: \sum_{0<r_1<\ldots<r_t} \prod_i r_i^{k_i-d_i}  t^{d_i} d_i! \= \biggl(\prod_{i} t^{d_i}d_i!\biggr)\cdot \zeta(d_t-k_t,\ldots,d_1-k_1).\]
Note that if for some $s$ we have that $\sum_{i=s}^t (d_i-k_i+1)\leq 0$, then the sum~\eqref{eq:deg2} is also maximized for $j=s-1$, contradicting the definition of~$t$. Hence, Lemma~\ref{lem:estimatezeta} implies that $L$ is a well-defined real number.
Therefore, $L=\Zdegree g$, and $\Zdegree g$ is a well-defined linear combination of multiple zeta values of weight $\leq \sum_{i=1}^t (d_i+1)$ (the latter one reads of from the definition of $L$).%, due to the definition of $t$.

% and, if $t$ is unique,
% \[%\Zdegree \ms{k_1,\ldots,k_t}{d_1,\ldots,d_t} 
% \Zdegree g\=  \sum_{0<r_1<\ldots<r_t} \prod_i \frac{(r_i-r_{i-1})^{k_i-1}}{r_i}\cdot\Omega\biggl[\prod_{i=1}^t \Bigl(\frac{1}{r_i}+\ldots+\frac{1}{r_t}\Bigr)^{d_i}\biggr], \]
% where $\Omega:\Q[r_1^{-1},\ldots,r_t^{-1}]\to \Q[r_1^{-1},\ldots,r_t^{-1}]$ is the linear mapping~\eqref{def:omega}, i.e.,
% \[\Omega\Bigl[\frac{1}{r_1^{l_1}\cdots r_t^{l_t}}\Bigr] \defis \frac{l_1!\cdots l_r!}{r_1^{l_1}\cdots r_t^{l_t}} \qquad \qquad (l_i\in \z_{\geq 0}).\]
% This is a linear combination of multiple zeta values of weight $\leq \sum_{i=1}^t (d_i+1)$.

Finally, we relate the degree and limits of~$g$ and~$h$ to the degree and limits of~$f$. Consider the difference $f-g\ost h$, and observe that $\degree (g\ost h) = \degree g + \degree h$ (as $\langle g\ost h\rangle_q = \langle g\rangle_q \langle h\rangle_q$; see Proposition~\ref{prop:doubleshuffle}).
%\begin{align}\label{eq:concfunc}\ms[\mathrm{s}]{\hspace{7pt}1,\ldots,1,k_{t+1},\ldots,k_r}{d_1,\ldots,d_{t},\hspace{5pt}0,\ldots,0\hspace{11pt}} \end{align}
%and
%\begin{align}\label{eq:prodfunc}\ms[\mathrm{s}]{1, \dots , 1}{d_1,\dots,d_t} \ost \ms[\mathrm{s}]{k_{t+1}, \dots , k_r}{0,\dots,0}.\end{align}
%Writing this difference in terms of the basis elements of $\MS$, it contains three types of terms. We claim that all these terms are of degree smaller than the degree~\eqref{eq:deg} in the statement. 
First we assume again that $j=t$ is the unique index for which the maximum in~\eqref{eq:deg2} is attained. In that case, we claim that $f-g\ost h$ is of smaller degree than $g\ost h$. 
It follows from this claim that the function~$f$ has the same degree, as well as degree limit, as $g\ost h$. Hence, in case $j=t$ is the unique index for which the maximum in~\eqref{eq:deg2} is attained, it suffices to prove the claim. 

In order to prove the claim, we write this difference $f-g\ost h$ in terms of the basis elements of $\MS$. Then, we will see it contains two types of terms. We show that all these terms are of degree smaller than the degree~\eqref{eq:deg2} in the statement. 

% First of all, we consider terms in $f-g\ost h$ of lower weight than the weight of $f$. %Note that the degree~\eqref{eq:deg} equals the weight of $f$.
% By Lemma~\ref{lem:liminq}\ref{it:lim1} the degree is bounded by the weight, so the degree of these terms will be smaller than the weight~\eqref{eq:degHJ}. 

%\jw{I have rewritten the next to paragraphs for clarity}
First of all, there are terms in $f-g\ost h$ where the column with $k_t$ and $d_t$ is on the right of the column with $k_{t+1}$ and $d_{t+1}$ (note that possibly some other column is stuffled on the columns containing these values). We apply Lemma~\ref{lem:liminq}\ref{it:lim2} repeatedly, so that:
\begin{itemize}
    \item The values~$d_j$ are replaced by $d_j'$ such that $k_j\leq d_{j}'+1$ if $j\leq t$ and $k_t< d_{t}'+1$, and with $k_j\geq d_{j}'+1$ if $j>t$ and $k_{t+1}>d_{t+1}'+1$. That this is always possible follows from the construction of $t$. For the strict inequalities we use the uniqueness of the maximum defining $t$.  
    \item Additionally, the values $d_t'$ and $d_{t+1}'$ are replaced by $d_t'-1$ and~$d_{t+1}'+1$ respectively. 
\end{itemize}
Lemma~\ref{lem:liminq}\ref{it:lim1} now implies that these terms are of lower degree. 

Secondly, there are terms of depth $<r$, for which a certain column is given by $\binom{k_t+k_{t+1}-a}{d_t+d_{t+1}}$ for some $a\geq 0$. Then, similarly, Lemma~\ref{lem:liminq} also implies that these terms are of lower degree. 

Now, the case that $j=t$ is not the unique index for which the maximum in~\eqref{eq:deg2} is attained remains. By a similar argument as in Lemma~\ref{lem:liminq}\ref{it:lim1}, we have
\[ \deg f \leq \deg g  +  \deg h.\]
Namely, for this inequality we forget all inequalities $m_i>m_j$ if $i\leq t$ and $j>t$. 

%Next, observe that in case $f=h$, we are done. Hence, $t>0$. %Moreover, by definition of $t$ we have $d_t\neq 0$. 
Let $j=t'$ be the \emph{maximal} index for which the maximum in~\eqref{eq:deg2} is obtained, and write
\[ f'=\ms{k_1,\ldots,k_{t'-1}, \hspace{5pt} k_{t'}, \hspace{5pt} k_{t'+1},\ldots,k_r}{0,\ldots,0,d_1+\ldots+d_{t'},d_{t'+1},\ldots,d_r}.\]
Now, we apply Lemma~\ref{lem:liminq}\ref{it:lim2} for $i=1,\ldots,t'-1$, to obtain a lower bound for the degree of~$f$. That is, %we obtain
\begin{align} \deg f=\degree\ms{k_1,\ldots,k_r}{d_1,\ldots,d_r} &\:\geq\:  \degree f'
%\ms{k_1,\ldots,k_{t'-1}, \hspace{15pt} k_{t'}, \hspace{15pt} k_{t'+1},\ldots,k_r}{0,\ldots,0,d_1+\ldots+d_{t'},d_{t'+1},\ldots,d_r} \\
\= \sum_{j=1}^{t'} (d_j+1) + \sum_{j=t'+1}^r k_{j}.
\end{align}
Here, the last equality holds as for $f'$ the maximum in~\eqref{eq:deg2} is uniquely attained at $j=t'$, and we already computed the degree and corresponding limit in that case. Hence, $\deg f$ is bounded from both sides by the value \eqref{eq:deg2}, which completes the proof in the case  that $j=t$ is not the unique index for which the maximum in~\eqref{eq:deg2} is attained.
%
% Hence, 
% \[\Zdegree\ms[\mathrm{s}]{\hspace{7pt}1,\ldots,1,k_{t+1},\ldots,k_r}{d_1,\ldots,d_{t},\hspace{5pt}0,\ldots,0\hspace{11pt}} = \Zdegree\ms[\mathrm{s}]{1, \dots , 1}{d_1,\dots,d_t}\cdot\Zdegree\ms[\mathrm{s}]{k_{t+1}, \dots , k_r}{0,\dots,0} \]
\end{proof}

Recall that by Lemma~\ref{lem:liminq} we have
\[ Z\ms{k_1, \dots , k_r}{d_1,\dots,d_r} =0\]
 if there is no index~$t$ such that $k_i=1$ for all $i\leq t$ and $d_i=0$ for all $i>t$.
%\begin{enumerate}
%\item[{\upshape(iii)}]\label{it:lim3} 
By going through the proof above for those basis elements in $\MS$ for which there does exist such a~$t$, we find that the weight limits of such elements does not vanish:
\begin{corollary}\label{prop:limitfactors} For all $d_1,\ldots,d_{t-1}\geq 0,  d_t\geq 1, k_{t+1}\geq 2$ and $k_{t+2},\ldots,k_{r}\geq 1$, write 
\[ f \= \ms{\hspace{7pt}1,\ldots,1,k_{t+1},\ldots,k_r}{d_1,\ldots,d_{t},\hspace{5pt}0,\ldots,0\hspace{11pt}}.\]
We have
\begin{align}\label{eq:degHJ} \degree f \= \sum_{i=1}^t (d_i+1)\+\sum_{i=t+1}^r k_i \= \wt f\end{align}
and 
\[ \Zdegree f \= \xi(d_1,\dots,d_t) \, \zeta(k_{t+1}, \dots , k_r) \= Z f.\]
\end{corollary}

\subsection{Algebraic setup}\label{sec:algsetup} We will now introduce the algebraic setup for our space $\MS$ by generalizing the classical setup introduced by Hoffman in \cite{H} and used in \cite{IKZ} for the regularization of multiple zeta values. For each model $\FF=\{f_k\}_{k=0}^\infty$ of well-normalized polynomials we will describe the harmonic product as well as the shuffle product on the level of words in analogy to \cite{H}. For the Bernoulli--Seki model the algebraic setup described here coincides with the one described in \cite[Section 3]{B}.
Define the set $A$, also called the set of \emph{letters}, by
\begin{align*}
	A \defis \left\{ \ai{k}{d} \,\Bigl\vert\, k \geq 1,\,d \geq 0 \right\} \,.
\end{align*}
We will define a product $\qsf$ on the space~$\QA$ of non-commuting polynomials in~$A$ and we will call a monic monomial in $\QA$ a \emph{word}. This product on the space of words will depend on a product $\df$ on the space $\Q A$ of letters, which depends on 
%a well-normalized family of polynomials
$\FF$. Recall that as $\FF$ is well-normalized, the $f_k$ are polynomials of degree $k$ with leading coefficient $\frac{1}{k!}$. Then, for $k_1,k_2 \geq 1$ and $1 \leq j \leq k_1+k_2-1$ there exist rational numbers $\alpha_\FF(k_1,k_2,j) \in \q$ such that for all $n \geq 1$
\begin{align}\label{eq:defalpha}
\sum_{\substack{n_1+n_2=n\\n_1,n_2\geq 1}} \partial f_{k_1}(n_1)\, \partial f_{k_2}(n_2) \=   \partial f_{k_1+k_2}(n) \+\sum_{j=1}^{k_1+k_2-1} \alpha_\FF(k_1,k_2,j)\,  \partial f_{j}(n)\,,
\end{align}
where as before $\partial f(n)=f(n)-f(n-1)$ for integers $n\geq 1$.

%\filbreak %enable to prevent a page break between Example and (i).
\begin{example}\mbox{}\\[-12pt]
\begin{enumerate}[{\upshape(i)},leftmargin=*]\itemsep5pt
    \item If $\FF=\mathrm{b}$ is the binomial model, i.e., $f_k(x)=\binom{x}{k}$, we have $\partial f_k(x) = \binom{x-1}{k-1}$ for $k\geq 1$ and since 
\begin{align*}
    \sum_{\substack{n_1+n_2=n\\n_1,n_2\geq 1}} \binom{n_1-1}{k_1-1} \binom{n_2-1}{k_2-1} \= \binom{n-1}{k_1+k_2-1}
\end{align*}
we obtain $\alpha_\mathrm{b}(k_1,k_2,j)=0$. 
\item For the Bernoulli--Seki model $\FF=\mathrm{s}$ we find that~$\alpha_\mathrm{s}(k_1,k_2,j)$ equals
%{\small
\begin{align}\label{eq:deflambda}
-\left(\!(-1)^{k_1} \binom{k_1+k_2-1-j}{k_2 -j} + (-1)^{k_2} \binom{k_1+k_2-1-j}{k_1-j} \!\right) \frac{B_{k_1+k_2-j}}{(k_1+k_2-j)!} \,,
\end{align}
%}
which can be proven by using Bernoulli's/Seki's/Faulhaber's formula for the sum of powers (see, e.g., \cite[Lemma~6.1.2(ii)]{vI}).
\end{enumerate} 
\end{example}

On $\Q A$ we define the product $\df$ for $k_1, k_2 \geq 1$ and $d_1,d_2 \geq 0$ by 
\begin{align}\label{eq:diamoneq}
	\ai{k_1}{d_1} \diamond_\FF \ai{k_2}{d_2} \= 	\ai{k_1+k_2}{d_1+d_2} +\sum_{j=1}^{k_1+k_2-1} \alpha_
	\FF(k_1,k_2,j) \ai{j}{d_1+d_2} \,.
\end{align}
In the case $\FF=\mathrm{b}$ we just write $\diamond = \diamond_{\mathrm{b}}\mspace{1mu}$, which is given by
\begin{align*}
	\ai{k_1}{d_1} \diamond \ai{k_2}{d_2} = 	\ai{k_1+k_2}{d_1+d_2}\,.
\end{align*}

For each well-normalized family of polynomials $\FF$ this gives a commutative non-unital $\Q$-algebra $(\Q A,  \df)$. We will be interested in $\Q$-linear combinations of words in the letters of $A$ and we define 
\begin{align*}
    \ws = \QA
\end{align*}
In the following we will use for $k_1,\dots,k_r\geq 1$, $d_1,\dots,d_r\geq 0$ the following notation to write words in $\ws$:
\begin{align*}
	\ai{k_1,\dots,k_r}{d_1,\dots,d_r} \defis   	\ai{k_1}{d_1}\dots \ai{k_1}{d_1}\,,
\end{align*}
where the product on the right is the usual non-commutative product in $\QA$.
\begin{definition}\label{def:quasishuffleprod} For a well-normalized family of polynomials $\FF$ we define the \emph{quasi-shuffle product}  $\qsf$ on $\ws$ as the $\Q$-bilinear product, which satisfies $1 \qsf w = w \qsf 1 = w$ for any word $w\in \ws$ and
	\begin{align}\label{eq:qshdef}
		a w \qsf b v = a (w \qsf b v) + b (a w \qsf v) + (a \df b) (w \qsh  v) 
	\end{align}
	for any letters $a,b \in A$ and words $w, v \in  \ws$. 
\end{definition}
We obtain a commutative $\Q$-algebras $(\ws,\qsf)$, as shown in \cite{H} (see also \cite{HI}). In the case $\FF=\mathrm{b}$ we just write $\qsh = \qsh_{\mathrm{b}}\mspace{1mu}$. As an example for the product~\eqref{eq:qshdef} we have
\begin{align}\label{eq:qashindep2}
	\ai{k_1}{d_1} \qsh \ai{k_2}{d_2} = \ai{k_1,k_2}{d_1,d_2} + \ai{k_2,k_1}{d_2,d_1} +	\ai{k_1+k_2}{d_1+d_2}\,.
\end{align}

The algebra $(\ws,\qsh)$ is graded by \emph{weight}, where the weight is defined by 
\begin{align*}
	\wt\left( \ai{k_1,\dots,k_r}{d_1,\dots,d_r} \right) = k_1+\dots+k_r+d_1+\dots+d_r\,
\end{align*}
and it is filtered by \emph{depth}, which is defined by 
\begin{align*}
	\dep\left( \ai{k_1,\dots,k_r}{d_1,\dots,d_r} \right) = r\,.
\end{align*}
In general the algebra $(\ws,\qsf)$ is not graded but filtered by weight.  

\begin{proposition}\label{prop:alghoms}For any well-normalized family of polynomials $\FF$ the  maps 
\begin{align*}
   \siso_\FF: (\ws,\qsf) &\longrightarrow (\MS, \ost) \\
    \ai{k_1,\dots,k_r}{d_1,\dots,d_r} &\longmapsto  \ms[\FF]{k_1, \dots , k_r}{d_1,\dots,d_r}
\end{align*}
and
\begin{align*}
    (\ws,\qsh) &\longrightarrow (\MS, \odot) \\
    \ai{k_1,\dots,k_r}{d_1,\dots,d_r} &\longmapsto  \ms[\mathrm{m}]{k_1, \dots , k_r}{d_1,\dots,d_r}
\end{align*}
are algebra-isomorphisms of filtered algebras. Recall that $\mathrm{m}$ stands for the monomial model (see Table~\ref{table:polmodels}). 
\end{proposition}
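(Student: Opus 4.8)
The plan is to prove each statement in two stages: first that the map is a linear isomorphism respecting the weight filtration, and then that it intertwines the two products. The first stage is immediate from Corollary~\ref{thm:basis}. The words $\ai{k_1,\dots,k_r}{d_1,\dots,d_r}$ with $k_i\geq 1$, $d_i\geq 0$ form a basis of $\ws=\QA$, and under $\siso_\FF$ (resp.\ the monomial map) they are sent to the functions $\ms[\FF]{k_1,\dots,k_r}{d_1,\dots,d_r}$ (resp.\ $\ms[\mathrm{m}]{k_1,\dots,k_r}{d_1,\dots,d_r}$), which Corollary~\ref{thm:basis} asserts form a basis of $\MS$. Since a word of weight $k$ maps to a basis function of weight $\leq k$, and Corollary~\ref{thm:basis} describes the basis of each $\MS_{\leq k}$, the map sends the weight-$\leq k$ part of $\ws$ bijectively onto $\MS_{\leq k}$; hence each map is an isomorphism of filtered vector spaces. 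It then remains to verify the homomorphism property, and by bilinearity it suffices to compare products of basis words.

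For $\siso_\FF\colon(\ws,\qsf)\to(\MS,\ost)$ I would pass to the $\vec{u}$-bracket. By Definition~\ref{def:products} one has $\langle f\ost g\rangle_{\vec{u}}=\langle f\rangle_{\vec{u}}\,\langle g\rangle_{\vec{u}}$, and the $\vec{u}$-bracket is injective, so it suffices to show that $\phi(w):=\langle\siso_\FF(w)\rangle_{\vec{u}}$ is a homomorphism from $(\ws,\qsf)$ to $\q\llbracket u_1,u_2,\dots\rrbracket$ with its ordinary product. By Proposition~\ref{prop:ubracketofs}, $\phi$ sends a word to a sum over strictly decreasing sequences $m_1>\dots>m_r>0$ of a product of one-variable factors $m_j^{d_j}\sum_{n\geq 0}\partial f_{k_j}(n)\,u_{m_j}^{n}$. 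I would then induct on the total depth, writing $w=aw'$ and $v=bv'$ with leading letters $a=\ai{k_1}{d_1}$ and $b=\ai{l_1}{e_1}$, and splitting the product $\phi(w)\phi(v)$ according to the overall largest index into the three cases $m_1>m_1'$, $m_1<m_1'$, and $m_1=m_1'$. The first two cases factor out $a$ (resp.\ $b$) at the top index and, restricting the remaining indices to lie below it, yield $\phi\bigl(a(w'\qsf v)\bigr)$ and $\phi\bigl(b(w\qsf v')\bigr)$ by the inductive hypothesis. In the coincidence case the two factors at the shared index $m$ multiply; since $\partial f_k(0)=0$ for $k\geq 1$, the convolution $\sum_{n+n'=N}\partial f_{k_1}(n)\,\partial f_{l_1}(n')$ may be taken over $n,n'\geq 1$, so \eqref{eq:defalpha} rewrites it as $\partial f_{k_1+l_1}(N)+\sum_j\alpha_\FF(k_1,l_1,j)\,\partial f_{j}(N)$, which by \eqref{eq:diamoneq} is exactly the factor attached to the letter $a\df b$; this case therefore contributes $\phi\bigl((a\df b)(w'\qsf v')\bigr)$. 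These three contributions are precisely the three terms of the recursion \eqref{eq:qshdef} defining $\qsf$, closing the induction.

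For the second map $(\ws,\qsh)\to(\MS,\odot)$ I would argue directly on partitions. Here $\qsh$ is the quasi-shuffle $\qsf$ for the binomial model $\FF=\mathrm{b}$, whose letter product is $\diamond$ with $\ai{k_1}{d_1}\diamond\ai{k_2}{d_2}=\ai{k_1+k_2}{d_1+d_2}$ (as $\alpha_{\mathrm b}=0$). Since $\ms[\mathrm{m}]{k_1,\dots,k_r}{d_1,\dots,d_r}(\lambda)=\sum_{m_1>\dots>m_r>0}\prod_j m_j^{d_j}\,r_{m_j}(\lambda)^{k_j}$, the pointwise product of two such functions is again a product of sums over decreasing sequences, and the same three-way split on the largest index applies. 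The only difference from the previous case is the coincidence term: at a shared index $m$ the factors multiply as $m^{d_1}r_m^{k_1}\cdot m^{e_1}r_m^{l_1}=m^{d_1+e_1}r_m^{k_1+l_1}$, with no lower-order correction, matching $\ai{k_1}{d_1}\diamond\ai{l_1}{e_1}=\ai{k_1+l_1}{d_1+e_1}$. Running the same induction identifies the pointwise product with the image of $\qsh$, the base case $r=s=1$ being exactly \eqref{eq:qashindep2}.

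I expect the main obstacle to be the bookkeeping in the inductive merging step for $\siso_\FF$: one must justify carefully that, after factoring out the top index, the remaining indices of both sequences, constrained to lie below the shared or larger top index, reassemble precisely into $\phi$ of the quasi-shuffle of the tails (so that the inductive hypothesis may be applied below the threshold), and one must track the vanishing $n=0$ contributions so that \eqref{eq:defalpha} applies verbatim. Commutativity and associativity of both products are already available, so they need not be re-derived; the entire content lies in matching the analytic merging of decreasing-sum indices with the combinatorial recursion \eqref{eq:qshdef}.
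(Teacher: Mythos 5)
Your proposal is correct and follows essentially the same route as the paper: both reduce to the general fact that sums $\sum_{m_1>\dots>m_r>0}\prod_j m_j^{d_j}h(m_j,k_j)$ with $h(m,k_1)\times h(m,k_2)=h(m,k_1+k_2)+\sum_j\alpha_\FF(k_1,k_2,j)h(m,j)$ define a quasi-shuffle homomorphism, applied with $h(m,k)=\sum_{n>0}\partial f_k(n)u_m^n$ via the $\vec{u}$-bracket for the first map and $h(m,k)=r_m(\lambda)^k$ (where $\alpha_{\mathrm b}=0$) for the second, with bijectivity from Corollary~\ref{thm:basis}. The only cosmetic difference is that the paper runs the induction on a truncation parameter $M$ for the $m_j$ (citing \cite[Lemma~2.18]{B2}), which is exactly the device that resolves the bookkeeping issue you flag about applying the inductive hypothesis below the top index.
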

\begin{proof} Assume $\FF$ is a well-normalized family of polynomials.  First notice that whenever one has a sum of the form 
\begin{align}\label{eq:harmonicsum}
    \sum_{m_1 > \dots > m_r > 0} \prod_{j=1}^r m_j^{d_j}\, h(m_j,k_j)
\end{align}
in a $\q$-algebra $(A,\times)$ with some function $h$ satisfying 
\begin{align}\label{eq:propofh}
    h(m,k_1)\times h(m,k_2) \= h(m,k_1+k_2) \+\sum_{j=1}^{k_1+k_2-1} \alpha_\FF(k_1,k_2,j)\,  h(m,j)\,,
\end{align}
then the linear map $\varphi$ defined by sending the generators $ \ai{k_1,\dots,k_r}{d_1,\dots,d_r}$ to~\eqref{eq:harmonicsum} satisfies $\varphi(w) \times \varphi(v)=\varphi(w \qsf v)$ for all $w,v\in \ws$.
% , assuming that the sum~\eqref{eq:harmonicsum} exists in $A$ \jw{This sentence should be rewritten, especially I do not understand what `assuming that this sum exists' means.}. 
This can be shown by truncating the sum over the $m_j$ by some $M$ and then do induction on $M$ together with the definition of~$\qsf$ (see for example \cite[Lemma~2.18]{B2}). 
To show the first statement we consider the $\vec{u}$-bracket of a generator of $\MS$, which by Proposition~\ref{prop:ubracketofs} is given by
\begin{align}\label{eq:ubrackeofsf}
\left\langle \ms[\FF]{k_1, \dots , k_r}{d_1,\dots,d_r}\right\rangle_{\!\!\vec{u}} \=\sum_{m_1 > \dots > m_r > 0} \prod_{j=1}^r\Bigl(m_j^{d_j} \sum_{n>0}  \partial f_{k_j}(n)\, u_{m_j}^{n}\Bigr).
\end{align}
Since the harmonic product~$\ost$ is defined by the $\vec{u}$-bracket (see Definition~\ref{def:products}) we see that $\siso_\FF$ is an algebra homomorphism, as~\eqref{eq:ubrackeofsf} is a sum of the form~\eqref{eq:harmonicsum} with $h(m,k) = \sum_{n>0}  \partial f_{k}(n)\, u_{m}^{n}$ and the property~\eqref{eq:propofh} is satisfied due to~\eqref{eq:defalpha}.
With a similar argument we see that also the second map is an algebra homomorphism, since
\begin{align*}
\msl[\mathrm{m}]{k_1, \dots , k_r}{d_1,\dots,d_r}{\lambda}\= \sum_{m_1 > \dots > m_n > 0} \prod_{j=1}^{r} m_j^{d_j} r_{m_j}(\lambda)^{k_j} \,,
\end{align*}
is a sum of the form~\eqref{eq:harmonicsum} with $h(m,k)=r_{m}(\lambda)^{k}$. All maps are also isomorphisms among the elements in $\ws$ and in $\MS$ (Corollary~\ref{cor:basis}).
\end{proof}

As a direct consequence of describing the product $\ost$ in terms of a quasi-shuffle product is the following.

\begin{proposition}\label{prop:pkkkisquasimodular}
For $k\geq 2, d\geq0$ with $k+d$ even we have $\mssmall[\mathrm{s}]{k, \dots , k}{d,\dots,d} \in \MM$, i.e., their $q$-brackets are quasimodular forms (of mixed weight).
\end{proposition}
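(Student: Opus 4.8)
The plan is to transport the statement to the quasi-shuffle algebra $(\ws,\qsf)$ through the isomorphism $\siso_{\mathrm{s}}$ of Proposition~\ref{prop:alghoms} and to use that $\MM$ is a $\ost$-subalgebra of $\MS$ (as noted in the remark following Proposition~\ref{prop:polfctonpartprop}). By Example~\ref{ex:qbrac}\ref{it:bibrac} every depth-one element $\mssmall[\mathrm{s}]{a}{b}$ with $a\geq 2$ and $a+b$ even has a quasimodular $q$-bracket (an Eisenstein series or a derivative thereof), hence lies in $\MM$. Since $\MM$ is closed under $\ost$, the whole $\ost$-subalgebra generated by these elements is contained in $\MM$; under $\siso_{\mathrm{s}}$ this subalgebra is $\siso_{\mathrm{s}}(\mathcal{B})$, where $\mathcal{B}$ is the $\qsf$-subalgebra of $\ws$ generated by the letters $\ai{a}{b}$ with $a\geq 2$ and $a+b$ even. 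It therefore suffices to prove the purely algebraic assertion that the word $w_r:=\ai{k,\dots,k}{d,\dots,d}$ with all $r$ columns equal to $\ai{k}{d}$ (where $k\geq 2$ and $k+d$ even) lies in $\mathcal{B}$.

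\textbf{The combinatorial mechanism.} Writing $a=\ai{k}{d}$, the $r$-fold quasi-shuffle power expands as
\[ a^{\qsf r} \= r!\,\ai{k,\dots,k}{d,\dots,d}\+(\text{a }\Q\text{-linear combination of words of depth}<r), \]
and, using Definition~\ref{def:quasishuffleprod}, one has the one-step recursion $a\qsf w_{r-1}=r\,w_r+\sum_{i=1}^{r-1}W_i$, in which $W_i$ is obtained from $w_{r-1}$ by replacing its $i$-th column with the single $\df$-square $a\df a$. The correction words thus have all their columns among the letters occurring in iterated $\df$-products of copies of $a$. I would run an induction on depth whose hypothesis concerns not the all-equal words but the symmetric word-sums of arbitrary multisets of even-parity letters with first entry $\geq 2$: such symmetric sums are precisely what $\qsf$-products of generators produce, so they lie in $\mathcal{B}$, and for the constant multiset $\{a,\dots,a\}$ the symmetric sum degenerates to $r!\,w_r$. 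Grouping each $\sum_i W_i$ into symmetric sums of smaller multisets (each expressible through $\qsf$ and $\df$ of generators) then feeds the induction and places $w_r$ in $\mathcal{B}$.

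\textbf{Main obstacle.} The step I expect to be the crux is guaranteeing that every letter occurring in any iterated $\df$-product of copies of $a$ is again an even-parity letter with first entry $\geq 2$, so that the corrections stay inside $\mathcal{B}$ rather than escaping to a letter $\ai{j}{\ell}$ with $j+\ell$ odd, whose depth-one $q$-bracket is genuinely non-quasimodular. By~\eqref{eq:diamoneq} a failure of parity-additivity of $\df$ can come only from a term $\alpha_{\mathrm{s}}(k_1,k_2,j)$ with $k_1+k_2-j=1$, that is, from the Bernoulli value $B_1$, while a new first entry equal to $1$ can appear only through $\alpha_{\mathrm{s}}(k_1,k_2,1)$. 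Using the explicit formula~\eqref{eq:deflambda} and the symmetry $\binom{k_1+k_2-2}{k_1-1}=\binom{k_1+k_2-2}{k_2-1}$, one finds that $\alpha_{\mathrm{s}}(k_1,k_2,1)$ is proportional to $\bigl((-1)^{k_1}+(-1)^{k_2}\bigr)B_{k_1+k_2-1}$, which vanishes for every $(k_1,k_2)\neq(1,1)$: when $k_1+k_2$ is odd the two signs cancel, and when $k_1+k_2$ is even the factor $B_{k_1+k_2-1}$ vanishes because $k_1+k_2-1\geq 3$ is odd. Hence, as long as both factors have first entry $\geq 2$, neither a first entry $1$ nor a $B_1$-contribution can occur, so the parity flip is never triggered.

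\textbf{Conclusion.} Granting this lemma, a short induction on the number of factors shows that $a^{\df s}$ is a $\Q$-linear combination of letters $\ai{j}{sd}$ with $j\geq 2$ and $j+sd$ even, so it remains a combination of generators of $\mathcal{B}$; the lemma is exactly what prevents the induction from leaving this set. Combining this with the depth-induction of the second paragraph gives $w_r=\ai{k,\dots,k}{d,\dots,d}\in\mathcal{B}$, whence $\mssmall[\mathrm{s}]{k,\dots,k}{d,\dots,d}=\siso_{\mathrm{s}}(w_r)\in\siso_{\mathrm{s}}(\mathcal{B})\subseteq\MM$, which is the assertion that its $q$-bracket is a quasimodular form of mixed weight.
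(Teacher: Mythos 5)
Your proof is correct, and it rests on the same crux as the paper's: for the Bernoulli--Seki model the product $\diamond_{\mathrm{s}}$ of two letters $\ai{k_i}{d_i}$ with $k_i\geq 2$ and $k_i+d_i$ even is again a linear combination of such letters. The paper dispatches this with a one-line appeal to the vanishing of the odd Bernoulli numbers $B_n$ for $n\geq 3$; your check is more careful, since you also rule out the two coefficients where that appeal does not literally apply, namely $\alpha_{\mathrm{s}}(k_1,k_2,k_1+k_2-1)$ (where $B_1\neq 0$ but the binomial factors vanish once $k_1,k_2\geq 2$) and $\alpha_{\mathrm{s}}(k_1,k_2,1)$ (which could otherwise create a first entry equal to $1$). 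Where you genuinely diverge is in the reduction of the word $a^r$, $a=\ai{k}{d}$, to depth-one data: the paper quotes the Hoffman--Ihara exponential formula \cite[(32)]{HI}, which in one stroke expresses $\varphi(a^r)$ as a polynomial in the $\varphi(a^{\diamond_{\mathrm{s}} m})$, whereas you re-derive this reduction by an explicit induction on depth over symmetric word-sums of multisets of even letters, the all-equal multiset degenerating to $r!\,\ai{k,\dots,k}{d,\dots,d}$. Your route is self-contained and makes the combinatorics visible at the cost of more bookkeeping; the paper's is shorter but imports the quasi-shuffle exp/log machinery. Both arguments then conclude identically, by identifying the depth-one $q$-brackets $\langle \mssmall[\mathrm{s}]{k'}{d'}\rangle_q$ with $k'+d'$ even as (derivatives of) Eisenstein series via Example~\ref{ex:qbrac} and~\eqref{eq:derivofeisensteinseries}, and by using that $\MM$ is closed under $\ost$ because the $q$-bracket is multiplicative for $\ost$ and $\QM$ is a ring.
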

\begin{proof}
Suppose a $\Q$-algebra~$R$ and an algebra homomorphism $\varphi:  (\ws,\qsf) \rightarrow R$ are given.
Using \cite[(32)]{HI} (see also \cite[(2.13)]{B2}) we have for $a\in A$ %and any algebra homomorphism $\varphi:  (\ws,\qsf) \rightarrow R$ in some $\Q$-algebra~$R$ 
the following equation in $R\llbracket X \rrbracket$:
\begin{align}\label{eq:qshexp}
\exp\left( \sum_{n=1}^{\infty} (-1)^{n-1} \varphi( a^{\diamond_\FF n}) \frac{X^n}{n} \right) \= 1+ \sum_{n=1}^{\infty} \varphi(a^n) X^n,
\end{align}
where $a^{\diamond_\FF n} = \overbrace{a \diamond_\FF \dots \diamond_\FF a }^n \in \Q A$ and $a^n = \overbrace{a\cdots a}^n \in \ws$. 

Now, let $R=\q\llbracket q \rrbracket$ and let $\varphi$ be the composition of the algebra homomorphism $\siso_\FF$ in Proposition~\ref{prop:alghoms} with the $q$-bracket, so that we obtain an algebra homomorphism $\varphi: (\ws,\qsf) \rightarrow \Q\llbracket q \rrbracket$ (see Proposition~\ref{prop:polfctonpartprop}). We call a letter $\ai{k}{d}\in A$ even if ${k\geq 2}$, ${d\geq 0}$ and $k+d$ is even. %In the case $\FF=\mathrm{s}$ one then checks using~\eqref{eq:deflambda} and~\eqref{eq:diamoneq} that for $k_1,k_2\geq 2, d_1,d_2\geq 0$ and $k_1+d_1$, $k_2 + d_2$ even, the product $\ai{k_1}{d_1} \diamond_\mathrm{s} \ai{k_2}{d_2}$ is again a linear combination of some $\ai{k'}{d'}$ with $k'\geq 2, d'\geq 0$ and $k'+d'$ even 
In the case $\FF=\mathrm{s}$ one then checks using~\eqref{eq:deflambda} and~\eqref{eq:diamoneq} that for two even letters $a,a'\in A$, the product $a \diamond_\mathrm{s} a'$ is again a linear combination of some even letters
(the Bernoulli numbers $B_k$ for $k\geq 3$ odd vanish). 
This inductively gives that for an even letter $a$ the element $a^{\diamond_\FF n}$ is also a linear combination of even letters. Then~\eqref{eq:qshexp} implies that the $q$-bracket of $\mssmall[\mathrm{s}]{k, \dots , k}{d,\dots,d}$, which is $\varphi(a^n)$ for $a=\ai{k}{d}$, can be written as a polynomial in the $q$-brackets of $\mssmall[\mathrm{s}]{k'}{d'}$ with $k'+d'$ even. By~\eqref{eq:derivofeisensteinseries} for $\FF=\mathrm{s}$ these are exactly given by derivatives of Eisenstein series and therefore we obtain a quasimodular form (of mixed weight). 
\end{proof}

\begin{example} For example, in the case $k=2$, $d=0$ we get in depth two
\begin{align*}
\biggl\langle \ms[\mathrm{s}]{2,2}{0,0} \biggr\rangle_{\!\! q} \= \frac{1}{2}G_2^2 - \frac{1}{2} G_4 + \frac{17}{120} G_2 +  \frac{31}{5760}\,,
\end{align*}
where we used $G_k = -\frac{B_k}{2 k!} + \langle \mssmall[\mathrm{s}]{k}{0} \rangle_q $.
\end{example}

The above algebraic setup can be seen as a generalization of the algebra $\wh^1$ defined in \cite{H}, as we explain now. Setting $z_k := \ai{k}{0}$ for $k\geq1$ we define 
\begin{align*}
    \wh^1 = \Q + \langle z_{k_1}\dots z_{k_r} \mid r\geq 1 , k_1,\dots,k_r \geq 1\rangle_\Q \,\subset\, \ws
\end{align*}
and its subspace of \emph{admissible words}
\begin{align*}
    \wh^0 = \Q + \langle z_{k_1}\dots z_{k_r} \in  \wh^1 \mid r\geq 1 , k_1 \geq 2\rangle_\Q\,.
\end{align*}
The quasi-shuffle product on $\ws$ reduces to a product on $\wh^1$ and on $\wh^0$. Moreover, the product $\ast$ reduces to the the classical harmonic product introduced in \cite{H}. One can then define the following $\Q$-linear map\footnote{By abuse of notation, we use $\zeta$ for the name of the map as well as for the name of the object. From the context, it should always be clear if we are talking about the map or the object.}
\begin{align}\label{eq:defzetamap}
    \zeta: \wh^0 &\longrightarrow \mzv\\
    z_{k_1}\dots z_{k_r} &\longmapsto \zeta(k_1,\dots,k_r)\,.
\end{align}
By the definition of multiple zeta values~\eqref{eq:defmzv} as nested sum one then checks that this map is an algebra homomorphism from $(\wh^0, \qsh)$ to the algebra of multiple zeta values $\mzv$. Since $\wh^1 = \wh^0[ z_1]$ (\cite[Proposition 1]{IKZ}) one can extend this algebra homomorphism uniquely to an algebra homomorphism $\zeta^\ast: (\wh^1, \qsh) \rightarrow \mzv[T]$ which satisfies $\zeta^\ast(z_1)=T$ and $\zeta^\ast_{\mid \wh^0} = \zeta$. This gives the definition of \emph{(harmonic) regularized multiple zeta values} for $k_1,\dots,k_r\geq 1$. %by $\zeta^\ast(k_1,\dots,k_r) := \zeta^\ast(z_{k_1}\dots z_{k_r}) \in \mzv[T]$.

In the following we will generalize this result and the goal is to obtain an algebra homomorphism $(\ws, \qsh) \rightarrow \mzv[T]$. First we define the analogue of admissible words in our larger space $\ws$.

\filbreak
\begin{definition}\mbox{}\\[-12pt] 
\begin{enumerate}[{\upshape(i)},leftmargin=*]\itemsep3pt
\item For $d\geq 0$ set $v_d:=\ai{1}{d}$. As an counterpart of $\wh^1$ and $\wh^0$ we define %the following subspaces of $\ws$
\begin{align*}
\wj^1 &\= \Q + \langle v_{d_1}\dots v_{d_r} \mid r\geq 1 , d_1,\dots,d_r \geq 0\rangle_\Q \,\subset\, \ws \\
\wj^0 &\= \Q + \langle v_{d_1}\dots v_{d_r} \in  \wj^1 \mid r\geq 1 , d_r \geq 1\rangle_\Q\,.
\end{align*}
% \begin{align*}
%   \wj^1 &= \Q + \left\langle  \ai{1,\dots,1}{d_1,\dots,d_r}\mid r\geq 1, d_1,\dots,d_r\geq 0  \right\rangle_\Q\,,\\   \wj^0 &= \Q + \left\langle  \ai{1,\dots,1}{d_1,\dots,d_r}\mid r\geq 1, d_1,\dots,d_{r-1}\geq 0, d_r\geq 1  \right\rangle_\Q \subset \wj^1\,.
% \end{align*}
    \item We set $\ws^1 := \wj^1 \wh^1$, which is the space spanned by all words of the form
	\begin{align}\label{eq:wordins1}
	w = \ai{1,\dots,1}{d_1,\dots,d_m}  \ai{1}{0}^{\! j} \ai{k_1,\dots,k_r}{0,\dots,0}
	\end{align}
	for some $j \geq 0$, $m,r \geq 0$, $d_m\geq 1$ (if $m\geq 1$), and $k_1\geq 2$ (if $r\geq 1$).
	\item 	A word $w\in \ws^1$ of the form~\eqref{eq:wordins1} is called \emph{admissible} if $j=0$. The subspace of admissible words will be denoted by 
\begin{align*}
    \ws^0 \defis \wj^0 \wh^0 = \langle w \in \ws^1 \mid w \text{ is admissible} \rangle_\Q\,.
\end{align*}
\end{enumerate}  

\end{definition}
Notice that this notion of admissibility coincides with the classical notion of admissibility on the subspace $\wh^1$, namely we have $\wh^0 = \wh^1 \cap \ws^0$. However, in contrast to the classical setup, the spaces $\ws^1$ and $\ws^0$ are not closed under $\qsh$ (or any $\qsf$) and therefore they are not subalgebras of $\ws$. 

\begin{lemma}\label{eq:nisideal} Let $\wsn$ be the subspace of $\ws$ spanned by all words in $\ws \setminus \ws^1$. Then $\wsn$ is an ideal in $(\ws, \qsh)$.
\end{lemma}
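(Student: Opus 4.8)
The plan is to reduce everything to the standard monomial basis of $\ws$ and to a purely combinatorial feature of words. First I would unwind the definition $\ws^1 = \wj^1\wh^1$: since $\wj^1$ is spanned by words whose letters all have top-entry $1$ and $\wh^1$ by words whose letters all have bottom-entry $0$, a basis word lies in $\ws^1$ precisely when it can be split into a prefix of letters with top-entry $1$ followed by a suffix of letters with bottom-entry $0$. Writing a basis word as $w = \ai{k_1,\dots,k_n}{d_1,\dots,d_n}$, such a split exists iff $\max\{i : d_i \geq 1\} < \min\{j : k_j \geq 2\}$ (with the conventions $\max\emptyset=0$, $\min\emptyset=\infty$). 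Consequently $\wsn$ is spanned exactly by those basis words exhibiting the \emph{forbidden pattern}: there are indices $j \leq i$ with $k_j \geq 2$ and $d_i \geq 1$, i.e.\ a top-entry $\geq 2$ occurring at or to the left of a bottom-entry $\geq 1$. Because $\qsh$ is commutative and bilinear, it then suffices to show that for any basis word $w$ with the forbidden pattern and any basis word $v$, every basis word occurring in $w \qsh v$ again has the forbidden pattern.

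Next I would record the two structural facts about the binomial quasi-shuffle $\qsh = \qsh_\mathrm{b}$ that make the argument work. Here $\diamond = \diamond_\mathrm{b}$ carries no correction terms, i.e.\ $\ai{k_1}{d_1}\diamond\ai{k_2}{d_2} = \ai{k_1+k_2}{d_1+d_2}$, so by the recursion in Definition~\ref{def:quasishuffleprod} every basis word $u$ appearing in $w\qsh v$ is obtained by interleaving the letters of $w$ and $v$ while preserving the internal order of each, with the sole extra option of merging one letter of $w$ with one letter of $v$ by adding their top- and bottom-entries. The two facts I need are: \textbf{(a)} no two letters of $w$ are ever merged with each other, and each letter of $w$ is merged with at most one letter of $v$ (this is immediate from the recursion, since in the term $(a \diamond b)(w\qsh v)$ the heads $a,b$ are each consumed); and \textbf{(b)} merging only increases both entries, as all top-entries are $\geq 1$ and all bottom-entries are $\geq 0$, whence $k_1+k_2 \geq k_1$ and $d_1+d_2 \geq d_1$. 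In particular a top-entry $\geq 2$ in $w$ remains $\geq 2$ in $u$, and a bottom-entry $\geq 1$ remains $\geq 1$.

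Finally I would track the witnesses $j \leq i$ of the forbidden pattern through the shuffle. If $j < i$, the letters $\ai{k_j}{d_j}$ and $\ai{k_i}{d_i}$ are distinct letters of $w$; by (a) they stay distinct and keep their relative order in $u$, and by (b) their images retain a top-entry $\geq 2$ and a bottom-entry $\geq 1$ respectively, so $u$ again has the pattern. If $j = i$, then $w$ has a single letter with $k_i \geq 2$ and $d_i \geq 1$, whose image in $u$ still carries both a top-entry $\geq 2$ and a bottom-entry $\geq 1$ by (b), giving the pattern at one position. In either case $u \in \wsn$, hence $w \qsh v \in \wsn$, and $\wsn$ is an ideal. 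The only delicate point is the bookkeeping in this last step---namely that the quasi-shuffle neither reorders the letters of $w$ nor merges two of them---so that the data witnessing non-membership in $\ws^1$ can never be destroyed; everything else is formal.
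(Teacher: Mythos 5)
Your proof is correct and follows essentially the same route as the paper's: both arguments characterize membership in $\wsn$ by a forbidden pattern (a top entry $\geq 2$ at or to the left of a bottom entry $\geq 1$) and observe that, since $\diamond=\diamond_{\mathrm{b}}$ is componentwise addition and the quasi-shuffle preserves the relative order of the letters of $w$ without ever merging two of them with each other, this pattern survives in every word of $w \qsh v$. Your unified formulation with witnesses $j\leq i$ is a cleaner packaging of the paper's two-case analysis (a single letter $\ai{k}{d}$ with $k\geq 2$, $d\geq 1$, versus a letter $\ai{k}{0}$ to the left of a letter $\ai{1}{d}$), and your explicit facts (a) and (b) make rigorous the bookkeeping that the paper leaves at the level of ``etc.''.
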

\begin{proof}
We need to show that for any $w \in \ws$ which is not of the form~\eqref{eq:wordins1} the product $w \qsh v$ is in $\wsn$ for any $v \in \ws$. If $w \in \ws$  is not of the form~\eqref{eq:wordins1} then $w$ either contains a letter $\ai{k}{d}$ or it contains a letter~$\ai{k}{0}$ which is on the left of a letter~$\ai{1}{d}$ with $k\geq 2$ and $d\geq 1$ in both cases. In the first case the product of $w$ with any element~$v$ will be a sum of words which also contain either a letter~$\ai{k}{d}$ or a $\ai{k}{d} \diamond b$ with some other letter $b$ of $v$. Since $\diamond$ is the component-wise addition we see that each word therefore also contains a letter with top entry $\geq 2$ and bottom entry $\geq 1$. In the second case all words either also have the a letter $\ai{k}{0}$ which is on the left of a letter $\ai{1}{d}$, or a a letter $\ai{k}{0} \diamond b$ on the left of a letter $\ai{1}{d} \diamond c$, where $b,c$ are letters of $v$, etc.. In all cases the words are not of the form~\eqref{eq:wordins1} and are therefore elements in $\wsn$. 
\end{proof}

As a generalization of $\wh^1 = \wh^0[ z_1]$, we will now show that also any element in~$\ws^1$ can be written as a polynomial with respect to the quasi-shuffle product~$\qsh$ with coefficients in $\ws^0$ up to elements in the ideal $\wsn$. We write
\[\ai{1}{0}^{\qsh j} = \underbrace{ \ai{1}{0} \qsh \cdots \qsh \ai{1}{0}}_j\,.\]
\begin{proposition} \label{prop:reg} For any word $w \in \ws^1$ of length $r$ there exist unique $w_j \in \ws^0$ such that
	\begin{align}\label{eq:wasregpol}
	w \:\equiv\: \sum_{j=0}^r w_j \qsh \ai{1}{0}^{\qsh j} \mod \wsn.
	\end{align}
	Moreover, if $\wt(w)=k$ then the $w_j$ are linear combinations of words of weight $k-j$.
\end{proposition}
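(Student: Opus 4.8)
The plan is to induct on the number $j$ of ``middle'' copies of $v_0=\ai 10$ in the canonical decomposition of $w$. Every $w\in\ws^1$ is, by definition~\eqref{eq:wordins1}, uniquely of the form $w=u\,v_0^{\,j}\,z$, where $u\in\wj^0$ (possibly empty, ending in a bottom entry $\geq 1$), $z\in\wh^0$ (possibly empty, beginning with a top entry $\geq 2$), and $v_0^{\,j}$ is the $j$-fold concatenation interposed between them; call $j=j(w)$ the \emph{middle-count}. For $j=0$ the word is admissible, $w\in\ws^0$, and the representation $w=w_0$ is trivial. For the inductive step the key computation is the single-letter quasi-shuffle of $v_0$ into $u\,v_0^{\,j-1}\,z$, which by the recursion~\eqref{eq:qshdef} unfolds into a sum over insertions of $v_0$ into the gaps of $u\,v_0^{\,j-1}\,z$ together with a sum of merges $v_0\diamond b$ over its letters $b$. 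I will show
\[ v_0 \qsh (u\,v_0^{\,j-1}\,z) \= j\,w \+ R, \]
where every word of $R$ either lies in $\wsn$ or is of the form~\eqref{eq:wordins1} with middle-count strictly less than $j$.

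For the insertion sum, placing $v_0$ in any of the $j$ gaps inside or adjacent to the run $v_0^{\,j-1}$ reproduces $w=u\,v_0^{\,j}\,z$, accounting for the coefficient $j$; every other insertion enlarges $u$ (which stays in $\wj^0$ since its last bottom entry is $\geq 1$) or $z$ (which stays in $\wh^0$ since its first top entry is $\geq 2$), hence produces a word of middle-count $j-1$. For the merge sum the decisive feature is that the proposition concerns $\qsh=\qsh_{\mathrm b}$, for which $\alpha_{\mathrm b}=0$, so that $v_0\diamond b = \ai 10\diamond\ai k d = \ai{k+1}{d}$ carries no lower-order correction. Merging $v_0$ with a letter of $u$ creates a letter $\ai 2 d$ which is either of top $\geq 2$ and bottom $\geq 1$, or a top-$\geq 2$, bottom-$0$ letter lying to the left of the bottom-$\geq 1$ letter closing $u$; in both cases the resulting word lies in $\wsn$ by (the proof of) Lemma~\ref{eq:nisideal}. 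Merging $v_0$ with a middle copy produces an $\ai 20$ that splits the run, giving a word in $\ws^1$ of smaller middle-count. Merging $v_0$ with a letter of $z$ only raises one top index, keeping $z$ inside $\wh^0$ and the middle-count at $j-1$; crucially, no new $\ai 10$ is ever created inside the $\wh^0$-block. Collecting terms and dividing by $j\in\Q^\times$, the induction hypothesis applied to $u\,v_0^{\,j-1}\,z$ and to the words of $R$, together with the fact that $\wsn$ is a $\qsh$-ideal (Lemma~\ref{eq:nisideal}), so that $v_0\qsh(\,\cdot\,)$ respects $\equiv\bmod\wsn$, expresses $w$ as the required polynomial $\sum_j w_j\qsh \ai 10^{\qsh j}$ with $w_j\in\ws^0$. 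The weight claim is then immediate: $\diamond$ and $\qsh$ are weight-homogeneous and $\wt(v_0)=1$, so $\wt(w)=k$ forces $\wt(w_j)=k-j$.

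For uniqueness I will argue with a leading-term filtration by middle-count. Iterating the computation above shows that for an admissible basis word $c=uz\in\ws^0$ one has $c\qsh \ai 10^{\qsh j}\equiv j!\,u\,v_0^{\,j}\,z$ modulo words of middle-count $<j$ and modulo $\wsn$, and the assignment $uz\mapsto u\,v_0^{\,j}\,z$ sends distinct basis words of $\ws^0$ to distinct basis words of $\ws^1$, since the canonical form recovers $u$, the integer $j$, and $z$. Consequently, if $\sum_j c_j\qsh \ai 10^{\qsh j}\equiv 0\bmod\wsn$ with $c_j\in\ws^0$, then choosing the largest $J$ with $c_J\neq 0$, the middle-count-$J$ graded piece of the left-hand side equals $J!$ times a nonzero combination of middle-count-$J$ basis words, which are linearly independent in $\ws/\wsn$; this contradiction forces all $c_j=0$, giving uniqueness.

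I expect the main obstacle to be the bookkeeping of the merge terms, and in particular the verification that $\alpha_{\mathrm b}=0$ is exactly what guarantees that no merge reintroduces a letter $\ai 10$ inside the admissible block $z$. This is the point at which the binomial model is essential: for a general $\qsf$ the lower-order terms $\alpha_\FF(k_1,k_2,j)\,\ai j{\,\cdot\,}$ would produce a fresh $\ai 10$ within the $\wh^0$-part, so the middle-count would fail to strictly decrease and the induction on $j$ would not close.
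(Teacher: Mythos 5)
Your proposal is correct and follows essentially the same route as the paper: the paper likewise decomposes $w = u\,\ai{1}{0}^{j}v$ with $u\in\wj^0$, $v\in\wh^0$, expands $u\,\ai{1}{0}^{j-1}v\qsh\ai{1}{0}$ to isolate the leading term $j\,w$ plus insertions/merges that are either in $\wsn$ or have strictly smaller middle-count, and inducts on $j$; uniqueness is likewise obtained by comparing the top graded piece in the middle-count filtration. Your additional remarks (the coefficient $j$ from the $j$ gaps in the run, and the role of $\alpha_{\mathrm b}=0$ in keeping merges from creating new $\ai{1}{0}$'s) are accurate refinements of the same argument.
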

\begin{proof}
%We just need to show the statement for  words of the form 
A word in $w\in\ws^1$ can be written as
\begin{align*}
w =  u \ai{1}{0}^j v \qquad\qquad (j\geq 0, u \in \wj^0, v \in \wh^0).
\end{align*}
Write $u = \ai{1,\dots,1}{d_1,\dots,d_m} $ and $v=\ai{k_1,\dots,k_r}{0,\dots,0}$ with $d_m>0$ and $k_1>1$. % i.e. $u \in \wj^0$ and $v \in \wh^0$.
%since all other words are admissible.
By definition of the quasi-shuffle product~$\qsh$ for $j\geq 1$ we have that $u \ai{1}{0}^{j-1} v \, \qsh\, \ai{1}{0}$ equals
{\footnotesize \allowdisplaybreaks
\begin{align*}
%&u \ai{1}{0}^{j-1} v \, \qsh\, \ai{1}{0} \=  
& j\, w \+  \sum_{i=1}^{m-1} \ai{1,\dots,1}{d_1,\dots,d_i}\ai{1}{0} \ai{1,\dots,1}{d_{i+1},\dots,d_m} \ai{1}{0}^{j-1}v \+ \sum_{i=1}^{r} u \ai{1}{0}^{j-1} \ai{k_1,\dots,k_i}{0,\dots,0}  \ai{1}{0} \ai{k_{i+1},\dots,k_r}{0,\dots,0} \\
 &
 \+ \sum_{i=1}^m \ai{1,\dots,1}{d_1,\dots,d_{i-1}} \biggl( \ai{1}{0}  \diamond \ai{1}{d_i}  \biggr) \ai{1,\dots,1}{d_{i+1},\dots,d_m} \ai{1}{0}^{j-1} \! v \+\sum_{i=1}^{j-1} u \ai{1}{0}^{i-1} \biggl( \ai{1}{0}  \diamond \ai{1}{0}  \biggr) \ai{1}{0}^{j-1-i} \! v \\
 &
 \+ \sum_{i=1}^r u \ai{1}{0}^{j-1} \ai{k_1,\dots,k_{i-1}}{0,\dots,0}  \biggl( \ai{1}{0} \diamond  \ai{k_i}{0} \biggr)\ai{k_{i+1},\dots,k_r}{0,\dots,0}\,.
\end{align*}}

Here every displayed term, except for $j w$, is either an element in $\wsn$ or is of the form $u' \ai{1}{0}^{j'}  v'$, where $u'\in \wj^0$ and $v'\in \wh^0$ and $j'\leq j-1$.
By induction on~$j$ we therefore see that $w$ can be written as in~\eqref{eq:wasregpol}. 

For the uniqueness assume that $  \sum_{j=0}^r w_j \qsh \ai{1}{0}^{\qsh j} \equiv 0 \mod \wsn$ for some $w_j \in \ws^0$. Then the term  $w_r \qsh \ai{1}{0}^{\qsh r}$ is the only summand which contains classes of the form~\eqref{eq:wordins1} with $j=r$. Since there are no relations among elements in $\ws^1$ modulo $\wsn$ we immediately obtain $w_r = 0$ and therefore $w_j=0$ for all $j=1,\dots,r$.
\end{proof}

\begin{example} %For example 
For $k\geq 2,d\geq 1$ we have 
\begin{align*}
    \ai{1,1,k}{d,0,0} \= \underbrace{\ai{1,k}{d,0}}_{w_1} \qsh \ai{1}{0} \meno \underbrace{\ai{1,k,1}{d,0,0} - \ai{1,1,k}{0,d,0} - \ai{1,k+1}{d,0}}_{w_0} \meno \underbrace{\ai{2,k}{d,0}}_{\in \,\wsn}\,. 
\end{align*}
Here $w_0,w_1 \in \ws^0$ and the last term is in $\wsn$ due to the $2$ in the top left entry.
\end{example}

% \begin{proof}[Proof of Theorem~\ref{thm:main}]
% \jw{to be written!}
% Iso: Proposition \ref{prop:alghoms}
% reg: Proposition \ref{prop:reg}

% Proposition \ref{prop:admissiblecase}
% \end{proof}

We end this subsection by defining an analogue of the involution $\iota$ on the space~$\ws$. Since Proposition~\ref{prop:alghoms} gives as for any well-normalized family of polynomials $\FF$ an isomorphism  $ \siso_\FF: (\ws,\qsf) \longrightarrow (\MS, \ost) $ we give the following definition.

\begin{definition}For any well-normalized family of polynomials $\FF$ we define the linear map $\iota_\FF$ by 
\begin{align*}
    \iota_\FF: \ws &\longrightarrow \ws\\
    w&\longmapsto (\siso_\FF^{-1} \circ \iota \circ \siso_\FF)(w)\,.
\end{align*}
\end{definition}

When we choose the Bernoulli--Seki model $\FF = \mathrm{s}$, then the involution~$\iota_\mathrm{s}$ can be described nicely using generating series. That is, given a depth $r\geq 1$ we write~$\gena$ for the formal power series in $\ws\llbracket X_1,Y_1,\dots,X_r,Y_r\rrbracket$ given by
\begin{align}\label{eq:gs}
	\gena \bi{X_1,\dots,X_r}{Y_1,\dots,Y_r} \defis \sum_{\substack{k_1,\dots,k_r\geq 1\\d_1,\dots,d_r \geq 0}} 	\ai{k_1,\dots,k_r}{d_1,\dots,d_r} X_1^{k_1-1} \dots X_r^{k_r-1} \frac{Y_1^{d_1}}{d_1!} \dots \frac{Y_r^{d_r}}{d_r!}\,.
\end{align}
%With this the action of $ \iota_\mathrm{s}$ can be described as follows:
\begin{proposition}\label{prop:iotags} We have 
	\begin{align}\label{eq:iotafaulhabergenseries}
		\iota_\mathrm{s} \left(	\gena \bi{X_1,\dots,X_r}{Y_1,\dots,Y_r}  \right) \= \gena\bi{Y_1 + \dots + Y_r,\dots,Y_1+Y_2,Y_1}{X_r,X_{r-1}-X_r,\dots,X_1-X_2}\,, 
	\end{align}
	where the involution~$\iota_\mathrm{s}$ on the left is applied coefficient-wise. 
\end{proposition}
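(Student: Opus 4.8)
The plan is to reduce the identity to a computation of $\vec{u}$-brackets. Since $\iota_\mathrm{s}=\siso_\mathrm{s}^{-1}\circ\iota\circ\siso_\mathrm{s}$ by definition, and both $\siso_\mathrm{s}$ (Proposition~\ref{prop:alghoms}) and the $\vec{u}$-bracket (Definition~\ref{defn:ubrac}) are injective, while $\langle\iota(f)\rangle_{\vec{u}}=\iota\langle f\rangle_{\vec{u}}$ by Definition~\ref{def:products}, it suffices to prove that $\iota$ applied to the $\vec u$-bracket of $\siso_\mathrm{s}(\gena\bi{X_1,\dots,X_r}{Y_1,\dots,Y_r})$ equals the $\vec u$-bracket of $\siso_\mathrm{s}$ of the right-hand side, as elements of $\Q\llbracket u_1,u_2,\ldots\rrbracket\llbracket X_1,Y_1,\dots,X_r,Y_r\rrbracket$.

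First I would put this $\vec u$-bracket in closed form. Applying Proposition~\ref{prop:ubracketofs} with the Bernoulli--Seki values $\partial f_k(n)=\tfrac{n^{k-1}}{(k-1)!}$ and summing the two exponential series $\sum_{k\geq1}X^{k-1}\tfrac{n^{k-1}}{(k-1)!}=e^{nX}$ and $\sum_{d\geq0}\tfrac{(mY)^d}{d!}=e^{mY}$ gives
\[
\Bigl\langle \siso_\mathrm{s}\bigl(\gena\bi{X_1,\dots,X_r}{Y_1,\dots,Y_r}\bigr)\Bigr\rangle_{\!\vec{u}} \= \sum_{\substack{m_1>\dots>m_r>0\\ n_1,\dots,n_r\geq 1}} \ \prod_{j=1}^r e^{m_j Y_j+n_j X_j}\,u_{m_j}^{n_j}.
\]
In this form the monomial $\prod_j u_{m_j}^{n_j}$ is precisely $u_\lambda$ for the partition $\lambda=(n_1,\dots,n_r)\times(m_1,\dots,m_r)$ written in Stanley coordinates, with distinct parts $m_j$ and multiplicities $n_j$.

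Next I would apply $\iota$, which sends each $u_\lambda$ to $u_{\overline\lambda}$. Using the transpose formula for Stanley coordinates recalled in the proof of Lemma~\ref{lem:handjclosed}, the conjugate $\overline\lambda$ has distinct parts $M_i=n_1+\dots+n_{r-i+1}$ with multiplicities $N_i=m_{r-i+1}-m_{r-i+2}$ (setting $m_{r+1}=0$); since all $n_j\geq1$ and the $m_j$ are strictly decreasing, this is again a depth-$r$ partition, so transposition is a bijection on depth-$r$ partitions. Reindexing the sum by $\mu=\overline\lambda$ with coordinates $(N_1,\dots,N_r)\times(M_1,\dots,M_r)$ and inverting the relations to $n_j=M_{r-j+1}-M_{r-j+2}$ and $m_j=N_1+\dots+N_{r-j+1}$, a telescoping computation rewrites the exponent as
\[
\sum_{j=1}^r\bigl(m_jY_j+n_jX_j\bigr)\=\sum_{i=1}^r\Bigl(M_i\,(X_{r-i+1}-X_{r-i+2})\+N_i\,(Y_1+\dots+Y_{r-i+1})\Bigr),
\]
again with $X_{r+1}=0$. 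The right-hand side is exactly the exponent of the closed form above after substituting $X_i\mapsto Y_1+\dots+Y_{r-i+1}$ and $Y_i\mapsto X_{r-i+1}-X_{r-i+2}$, so $\iota\langle\siso_\mathrm{s}(\gena\bi{X_1,\dots,X_r}{Y_1,\dots,Y_r})\rangle_{\vec u}$ equals the $\vec u$-bracket of $\siso_\mathrm{s}(\gena\bi{Y_1+\dots+Y_r,\dots,Y_1}{X_r,\dots,X_1-X_2})$, and the claim follows by injectivity.

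The step requiring the most care—and the main obstacle—is the bookkeeping of the transpose bijection together with the change of variables in the exponent: one must track the partial sums $M_i=\sum_{l\le r-i+1}n_l$ against the differences $N_i=m_{r-i+1}-m_{r-i+2}$ and verify that the two induced linear substitutions, one in the $X$-variables and one in the $Y$-variables, are mutually transpose. This is precisely the algebraic shadow of the fact that conjugation swaps the roles of parts and multiplicities in Stanley coordinates; once it is pinned down, the remaining manipulations are purely formal operations on exponential generating series.
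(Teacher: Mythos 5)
Your proof is correct and follows essentially the same route as the paper, which simply defers to the change of variables of \cite[Theorem 2.3]{B} applied to the $\vec{u}$-bracket together with the explicit transpose formula for Stanley coordinates from the proof of Lemma~\ref{lem:handjclosed}. You have merely written out in full the exponential-generating-series computation and the telescoping substitution that the paper delegates to that reference.
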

\begin{proof}
This can be proven by using the same change of variables as it was done in \cite[Theorem 2.3]{B} by replacing the $q$-series there with the $\vec{u}$-bracket of $P_\mathrm{s}$ and then using the explicit description of $\iota$ mentioned in the proof of Lemma~\ref{lem:handjclosed}.
\end{proof}

% \jw{We have to discuss this paragraph. I find it hard to parse this.}\hen{Yea I agree}
% The involution~$\iota_\mathrm{s}$ is of particular interest when one is interested in describing relations among modular forms. More precisely, as it will be shown in \cite{BI}, one is interested in  $\iota_\mathrm{s}$-invariant algebra homomorphism $(\ws,\ast) \longrightarrow A$ for some $\Q$-algebra $A$. Since then the images of the words $\ai{k}{d}$ satisfy, up to a factor, the same algebraic relations as the $d$-th derivatives of the Eisenstein series $G_{k-d}\mspace{1mu}$.  In particular, the cases of holomorphic functions in the upper half-plane $A=\mathcal{O}(\mathbb{H})$ and $A=\Q\llbracket q \rrbracket$ are of interest if the depth one images are exactly the (derivatives of) Eisensteins series. 
% A candidate for the depth two part of such a map was constructed in in \cite{BKM} and for the case $A=\Q\llbracket q \rrbracket$ a general construction will be content of \cite{BB}. In the next section, we will use the results from Section~\ref{sec:limits} to give for any  well-normalized family of polynomials $\FF$ an $\iota_\FF$-invariant algebra homomorphism $(\ws,\ast) \longrightarrow \R[T]$, which generalizes the harmonic regularized multiple zeta values. This map can be seen as providing the constant term for a realization of the formal multiple Eisenstein series space defined in \cite{BI}.

\subsection{Bi-multiple zeta values} We now combine the results of Section~\ref{sec:limits} and~\ref{sec:algsetup} to define a bi-variant of multiple zeta values. These can be seen as regularized limits of polynomial functions on partitions, since they are defined for arbitrary words $w \in \ws$.
\begin{definition}\label{def:bimzv} We define the linear map 
\begin{align*}
    \zeta: \ws &\longrightarrow \mzv[T]\\
    w = 	\ai{k_1,\dots,k_r}{d_1,\dots,d_r} &\longmapsto \zeta(w) = \zebi{k_1,\dots,k_r}{d_1,\dots,d_r}
\end{align*}
as follows:
\begin{enumerate}[(i), leftmargin=*]\itemsep3pt
    \item \label{it:bizetai} For $w \in \wsn$ we set $\zeta(w) = 0$.
    \item For $w\in \ws^0$ we write $w = \ai{\hspace{3pt}1,\dots,1,\hspace{3pt} k_1,\dots,k_r}{d_1,\dots,d_m,\hspace{3pt}0,\dots,0\hspace{5pt}}$ and set
    \begin{align*}
        \zebi{\hspace{7pt} 1,\dots,1,k_1,\dots,k_r}{d_1,\dots,d_m,0,\dots,0\hspace{7pt}} \= \xi(d_1,\ldots,d_m)\, \zeta(k_1,\dots,k_r)\,,
    \end{align*}
    where $k_1\geq 2$, $d_m\geq 1$ and where $\xi$ is the conjugated multiple zeta value defined in Definition~\ref{def:conjzeta}.
    \item\label{it:bizetaiii} For $w = \sum_{j=0}^r w_j \qsh \ai{1}{0}^{\qsh j}$ with $w_j \in \ws^0$ we set 
    \begin{align*}
        \zeta(w) \= \sum_{j=0}^r \zeta(w_j) \,T^j\,.
    \end{align*}
\end{enumerate}
\end{definition}
Notice that due to Proposition~\ref{prop:reg} and part~\ref{it:bizetai} the value $\zeta(w)$ in \ref{it:bizetaiii} is well-defined.

% \begin{lemma} For $w_1,w_2 \in \ws^0$ we have $w_1 \ast w_2 \equiv w_1 \ast_{\mathrm{s}} w_2  \mod lower degree$.
% \end{lemma}

\begin{theorem}\label{thm:bimzv} For any  well-normalized family of polynomials $\FF$ the map $$\zeta: (\ws,\qsh) \longrightarrow \mzv[T]$$ is an $\iota_\FF$-invariant algebra homomorphism.
\end{theorem}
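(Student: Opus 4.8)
The plan is to prove two things about the map $\zeta:(\ws,\qsh)\to\mzv[T]$: that it is an algebra homomorphism for the (binomial-model) quasi-shuffle product $\qsh$, and that it is invariant under $\iota_\FF$ for every well-normalized family $\FF$. I will treat these separately, and the homomorphism property first.

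\textbf{The algebra homomorphism property.} First I would reduce the claim to a statement on $\ws^1$ modulo the ideal $\wsn$. By Lemma~\ref{eq:nisideal}, $\wsn$ is an ideal in $(\ws,\qsh)$, and by definition $\zeta$ vanishes on $\wsn$; hence $\zeta$ factors through the quotient $(\ws/\wsn,\qsh)$, and it suffices to check multiplicativity on representatives in $\ws^1$. Next, using Proposition~\ref{prop:reg}, every $w\in\ws^1$ is congruent modulo $\wsn$ to a polynomial $\sum_j w_j\qsh\ai{1}{0}^{\qsh j}$ in the single letter $\ai{1}{0}$ with admissible coefficients $w_j\in\ws^0$. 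This is precisely the structure that lets the classical regularization argument of \cite{IKZ} run: the map $\zeta$ on $\ws^1$ is the unique extension of its restriction to $\ws^0$ sending $\ai{1}{0}\mapsto T$ and respecting $\qsh$-polynomial structure, exactly as $\zeta^\ast$ extends $\zeta$ from $\wh^0$ to $\wh^1=\wh^0[z_1]$. So the heart of the matter is to show $\zeta$ is multiplicative on the admissible subspace $\ws^0=\wj^0\wh^0$.

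On $\ws^0$ I would argue as follows. By part~(ii) of Definition~\ref{def:bimzv}, $\zeta$ restricted to $\ws^0$ is the product of two maps: the conjugated-zeta-value map $\xi$ on the $\wj^0$-part and the ordinary multiple-zeta-value map $\zeta$ on the $\wh^0$-part. The latter is a $\qsh$-algebra homomorphism $(\wh^0,\qsh)\to\mzv$ by the classical nested-sum argument recalled after \eqref{eq:defzetamap}. For the $\wj$-part, the key is that $\xi(d_1,\dots,d_m)$ is a linear combination of multiple zeta values whose products satisfy the \emph{index-shuffle} formula announced in the introduction, e.g.\ $\xi(d_1)\xi(d_2)=\xi(d_1,d_2)+\xi(d_2,d_1)$; this is exactly the shuffle-type product that $\qsh$ induces on $\wj^0$ through the letters $v_d=\ai{1}{d}$. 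I would verify that the $\df$-product of two letters $v_{d_1}\df v_{d_2}=\ai{2}{d_1+d_2}+\dots$ lands in $\wsn$ (top entry $\geq2$), so the quasi-shuffle $v_{d_1}\qsh v_{d_2}$ on $\wj^0$ collapses modulo $\wsn$ to the pure shuffle $v_{d_1}v_{d_2}+v_{d_2}v_{d_1}$, matching the $\xi$-product. Finally, because the admissibility conditions force the $\wj^0$-letters (top entry $1$) to stay to the left and the $\wh^0$-letters to the right, the quasi-shuffle of two admissible words decomposes, modulo $\wsn$, into a sum over interleavings that separates into a $\wj^0$-shuffle times a $\wh^0$-harmonic product — so $\zeta$ of the product equals the product of the $\xi$- and $\zeta$-factors, as required. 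This cross-term bookkeeping is what I expect to be the main obstacle.

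\textbf{The $\iota_\FF$-invariance.} Here the cleanest route is to reduce to a single model and use the functional equation of Proposition~\ref{prop:iotags}. Since $\siso_\FF$ is an isomorphism $(\ws,\qsf)\to(\MS,\ost)$ for each $\FF$, and the degree/limit computation of Theorem~\ref{thm:main} and Corollary~\ref{prop:limitfactors} is model-independent (the degree limit depends only on the leading term), the value $\zeta(w)$ should be interpretable as a regularized $q\to1$ limit of the corresponding polynomial function on partitions, which is intrinsic. Concretely, I would first establish invariance for the Bernoulli--Seki model: Proposition~\ref{prop:iotags} gives the explicit action of $\iota_\mathrm{s}$ on the generating series $\gena$, and I would apply $\zeta$ coefficient-wise to both sides of \eqref{eq:iotafaulhabergenseries}, checking that the swap $(X_i)\leftrightarrow(Y_i)$ with the partial-sum change of variables is exactly the symmetry relating $\xi(d_1,\dots,d_m)\zeta(k_1,\dots,k_r)$ to its conjugate — this is the realization of the formal double Eisenstein space of \cite{BKM}. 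Invariance for a general well-normalized $\FF$ then follows because the two definitions of $\iota_\FF$ differ only by the choice of model, while $\zeta$ itself is defined model-independently through the admissible-word decomposition, so $\zeta\circ\iota_\FF=\zeta\circ\iota_\mathrm{s}=\zeta$.
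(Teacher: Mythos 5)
Your reduction to the admissible subspace $\ws^0$ via the ideal $\wsn$ and Proposition~\ref{prop:reg} matches the paper, but from that point on both halves of your argument rest on steps that are not carried out and that hide the real content. For multiplicativity on $\ws^0$ you propose to split the quasi-shuffle of two admissible words, modulo $\wsn$, into a shuffle of the $\wj^0$-parts times a harmonic product of the $\wh^0$-parts, and then to invoke the index-shuffle formula $\xi(d_1)\xi(d_2)=\xi(d_1,d_2)+\xi(d_2,d_1)$. Two problems: first, the separation claim is delicate and unproven (e.g.\ letters $\ai{1}{0}$ occurring inside the $\wj^0$-part can stuffle with $\wh^0$-letters to produce letters $\ai{k+1}{0}$ that do \emph{not} land in $\wsn$, so the cross-terms do not all vanish); second, the index-shuffle property of $\xi$ is stated in the paper as a \emph{consequence} of Theorem~\ref{thm:bimzv}, so using it as an input is circular unless you prove it directly from Definition~\ref{def:conjzeta}, which is nontrivial because of the factorial-inserting operator $\Omega$. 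The paper avoids all of this with one observation you are missing: by Corollary~\ref{prop:limitfactors}, for $w\in\ws^0$ one has $\zeta(w)=Z\,\siso_\mathrm{s}(w)$, the weight limit of the $q$-bracket of the associated partition function. Multiplicativity then follows because the $q$-bracket is a homomorphism for $\ost$ (Proposition~\ref{prop:doubleshuffle}), $\siso_\mathrm{s}$ intertwines $\ast_\mathrm{s}$ with $\ost$, the products $\ast$ and $\ast_\mathrm{s}$ agree up to lower-weight terms killed by the weight limit, and the limit of a product of convergent limits is the product of the limits.

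The same gap recurs in your treatment of $\iota_\FF$-invariance. Applying $\zeta$ coefficient-wise to the generating-series identity \eqref{eq:iotafaulhabergenseries} and ``checking'' that the change of variables matches the symmetry of the bi-multiple zeta values is precisely the family of identities you are asked to prove; Proposition~\ref{prop:iotags} only tells you what $\iota_\mathrm{s}$ does to words, not why $\zeta$ is constant on its orbits. Your final reduction $\zeta\circ\iota_\FF=\zeta\circ\iota_\mathrm{s}$ is also unjustified, since $\iota_\FF=\siso_\FF^{-1}\circ\iota\circ\siso_\FF$ genuinely depends on $\FF$. The paper's mechanism is again the $q$-bracket: $\langle\siso_\FF(w)\rangle_q=\langle\siso_\FF(\iota_\FF(w))\rangle_q$ holds exactly because the $q$-bracket is $\iota$-invariant, and one then extracts equality of the individual coefficients $\zeta(w_j)=\zeta(\tilde w_j)$ in the decomposition $w\equiv\sum_j w_j\qsh\ai{1}{0}^{\qsh j}$ by comparing asymptotics, using $\bigl\langle\siso_\FF\ai{1}{0}\bigr\rangle_q\asymp-\log(1-q)/(1-q)$ against $\langle\siso_\FF(w_j)\rangle_q\asymp(1-q)^{-\wt(w_j)}$ so that distinct powers of $\log(1-q)$ cannot interfere. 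Without identifying $\zeta|_{\ws^0}$ with this analytic weight limit, neither half of your argument closes.
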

\begin{proof}
First we show that $\zeta$ is an algebra homomorphism. Since $\wsn$ is an ideal (Lemma~\ref{eq:nisideal}) and  $\zeta(w) = 0$ for $w \in \wsn$, we can restrict to the case when $w \notin \wsn$, i.e., $w \in \ws^1$. Moreover we can restrict to the elements in $\ws^0$ due to the definition in (iii). 
If $w \in \ws^0$ we can assume that we can write
$w = \ai{1,\dots,1,k_1,\dots,k_r}{d_1,\dots,d_m,0,\dots,0}$ with $d_m\geq 1$ (or $m=0$) and $k_1\geq 2$ (or $r=0$). 
Then we have by Corollary~\ref{prop:limitfactors} that 
\begin{align}\label{eq:zetaofws0}
    \zeta(w) \=  \Zdegree \ms[\mathrm{s}]{\hspace{7pt}1,\ldots,1,k_{1},\ldots,k_r}{d_1,\ldots,d_{m},0,\ldots,0\hspace{7pt}}  \= Z\siso_\mathrm{s}(w)\,,
\end{align}
where the weight limit~$Z$ is introduced in Definition~\ref{def:wtlim}.
Now for $k\geq 1$ and $f \in \MS$ set
\begin{align}
    Z_k(f) = \lim_{q\to 1}(1-q)^{k}\langle f \rangle_q\,,
\end{align}
i.e., if $k$ is the weight of $f$ then $Z_k(f)$ is exactly the weight limit of~$f$.

Let $w_1,w_2 \in \ws^0$ be given and write $k_1=\wt(w_1)$, $k_2= \wt(w_2)$ and $k=k_1+k_2$. Then, we have by Proposition~\ref{prop:alghoms} that $\siso_\mathrm{s}(w_1 \ast_{\mathrm{s}} w_2) = \siso_\mathrm{s}(w_1) \ost  \siso_\mathrm{s}(w_2)$ and $\siso_\mathrm{b}(w_1 \ast w_2) = \siso_\mathrm{b}(w_1) \ost  \siso_\mathrm{b}(w_2)$. Comparing the definition of $\ast =\ast_\mathrm{b}$ and $\ast_\mathrm{s}\mspace{1mu}$, we see that they are the same up to lower weight terms. In particular, their image under $Z_k$ is the same. Moreover, since $Z_k$ is defined via the $q$-bracket, which is an algebra homomorphism with respect to $\ost$ (Proposition~\ref{prop:polfctonpartprop}), we get
\begin{align*}
    \zeta(w_1) \zeta(w_2) &\= Z_{k_1}\siso_\mathrm{s}(w_1)\, Z_{k_2}\siso_\mathrm{s}(w_2) \= Z_k\left( \siso_\mathrm{s}(w_1)  \ost \siso_\mathrm{s}(w_2) \right)\\
    &\= Z_k\left( \siso_\mathrm{s}(w_1 \ast_{\mathrm{s}} w_2)\right) \= Z_k\left( \siso_\mathrm{s}(w_1 \ast w_2) \right).
\end{align*}
But this also equals $\zeta(w_1 \ast w_2)$, since $w_1 \ast w_2$ is a linear combination of words which are either in $\ws^0$ (i.e., they can be written as in~\eqref{eq:zetaofws0}) or which are in $\wsn$. For the latter the weight limit, and therefore the image under $Z_k\mspace{1mu}$, vanishes as seen in Lemma~\ref{lem:liminq}.

It remains to show that $\zeta$ is $\iota_\FF$-invariant for any  well-normalized family of polynomials $\FF$. Let $w = \sum_{j=0}^r w_j \qsh \ai{1}{0}^{\qsh j}$ with $w_j \in \ws^0$ and define the $\tilde{w}_j \in  \ws^0$ by $\iota_\FF(w) =:  \sum_{j=0}^r \tilde{w}_j \qsh \ai{1}{0}^{\qsh j}$. We need to show that $\zeta(w_j) = \zeta(\tilde{w}_j)$ for all $j=0,\dots,r$. Since the $q$-bracket is $\iota$ invariant we have $\langle \siso_\FF(w) \rangle_q = \langle \siso_\FF( \iota_\FF(w)) \rangle_q\mspace{1mu}$, i.e., we get, up to $q$-series which vanish under the weight limit (elements in $\langle \iota_\FF(\wsn) \rangle_q$), that
\begin{align*}
     \sum_{j=0}^r \langle \siso_\FF(w_j)  \rangle_q \,   \biggl\langle \siso_\FF \ai{1}{0}  \biggr\rangle_{\!\!q}^{ j} \,\equiv\,   \sum_{j=0}^r \langle \siso_\FF(\tilde{w}_j)  \rangle_q  \,  \biggl\langle \siso_\FF \ai{1}{0}  \biggr\rangle_{\!\!q}^{ j} \,.
\end{align*}
For any  well-normalized family of polynomials $\FF$ we have 
\begin{align*}
    \biggl\langle \siso_\FF \ai{1}{0}  \biggr\rangle_{\!\!q} \= \sum_{n>0} \frac{q^n}{1-q^n}\,.
\end{align*}
Since\footnote{Here $f(q) \asymp g(q)$ means there exist constants $c_1,c_2$ with $c_1 |g(q)| \leq |f(q)| \leq c_2 |g(q)|$ as  $q\rightarrow 1$.} $\sum_{n>0} \frac{q^n}{1-q^n} \asymp -\frac{\log(1-q)}{(1-q)}$ (see \cite{P}) and $ \langle \siso_\FF(w_j) \rangle_q   \asymp \frac{1}{(1-q)^{\wt(w_j)}}$ (by Section~\ref{sec:limits}) as $q\rightarrow 1$, we see that 
%for $j>0$ we actually have $\langle \siso_\FF(w_j) \rangle_q = \langle \siso_\FF(\tilde{w_j}) \rangle_q$ and for $j=0$ the weight limits of $\langle \siso_\FF(w_0)  \rangle_q $ and $\langle \siso_\FF(\tilde{w_0})  \rangle_q $ equal 
for $j\geq 0$ the weight limits of $\langle \siso_\FF(w_j)  \rangle_q $ and $\langle \siso_\FF(\tilde{w_j})  \rangle_q $ are equal. By the discussion above applying $\zeta$ to words in $\ws^0$ exactly yields the weight limits, which are also independent of the choice of (well-normalized) $\FF$, from which we deduce $\zeta(w_j) = \zeta(\tilde{w}_j)$ for all $j=1,\dots,r$.
\end{proof}

\section{Induced relations among Multiple Zeta Values}\label{sec:mzv}
Here we interpret and provide relations among MZV's from our partition analogues. 

%\subsection{Derivative of relations???}

\subsection{Double shuffle relations} In this section, we want to explain why the relations $ \langle f  \osh g -  f \ost g\rangle_q=0$ for any $f,g\in \q^\PP$ (see Proposition~\ref{prop:doubleshuffle}) can be seen as an analogue of the double shuffle relations for multiple zeta values.
For this, we first try to indicate why the product~$\oshh$ can be seen as the correct analogue of the shuffle product of multiple zeta values. We first recall the classical shuffle product. 
Setting $\wh=\Q\langle x,y\rangle$ the spaces $\wh^1$ and $\wh^0$ from the previous section can be identified with the subspaces $\Q+\wh y$ and $\Q + x \wh y$ of $\wh$ via $z_k = x^{k-1}y$. On these spaces one can then define the shuffle product  as the $\Q$-bilinear product, which satisfies $1 \shuffle w = w \shuffle 1 = w$ for any word $w\in \wh$ and
\begin{align*}
a_1 w_1 \shuffle a_2 w_2 = a_1 (w_1 \shuffle a_2 w_2) + a_2 (a_1 w_1 \shuffle w_2)
\end{align*}
for any letters $a_1, a_2 \in \{x,y\}$ and words $w_1,w_2 \in \wh$. The spaces $\wh^1$ and $\wh^0$ are closed under this product and a classical result is (see \cite[Corollary 2.10]{B2}) that the  $\Q$-linear map $\zeta: \wh^0 \rightarrow \mzv$ defined in \eqref{eq:defzetamap} is also an algebra homomorphism with respect to the shuffle product $\shuffle$. For $w,v\in \wh^0$ this then implies the (finite) double shuffle relations $\zeta(w \shuffle v - w \ast v)=0$.

By the definition of $\ost$ together with  Proposition~\ref{prop:alghoms} and Theorem~\ref{thm:bimzv}, it becomes clear why $\ost$ can be seen as the analogue of the stuffle product. On the other hand the definition of~$\oshh$ in Definition~\ref{def:products} as $F\osh G = \iota(\iota(F)\ost \iota(G))$ is completely different to the definition of $\shuffle$ above and it is not obvious why this is the correct analogue. 
This was recently shown by Brindle in \cite{Bri1}, in a slightly different context, by showing that the involution~$\iota$ (which is called the partition relation) corresponds to a certain duality introduced by Singer in \cite{Sin}. With this one can then show (see \cite[Theorem 3.46]{Bri1}) that the product defined by combining this duality together with the stuffle product indeed gives the correct analogue of the shuffle product. 

In \cite{BI} it will be shown that one not just only gets the finite double shuffle relations, but also the extended double shuffle relations, i.e., conjecturally all relations among multiple zeta values. 

We will leave out the details here and just indicate this with two examples. 
\begin{example} \label{eq:dshanalog} The harmonic product of $z_2$ and $z_3$ is given by $z_2 \ast z_3 = z_2 z_3 + z_3 z_2 + z_3$, i.e. $\zeta(2) \zeta(3) = \zeta(2,3)+\zeta(3,2)+\zeta(5)$.  The harmonic product~$\ost$ of $\ms[\mathrm{s}]{2}{0}$ and $\ms[\mathrm{s}]{3}{0}$ is given by
\begin{align*}
   \ms[\mathrm{s}]{2}{0} \ost \ms[\mathrm{s}]{3}{0} &=    \ms[\mathrm{s}]{2,3}{0,0}+  \ms[\mathrm{s}]{3,2}{0,0}+  \ms[\mathrm{s}]{5}{0}-\frac{1}{12}  \ms[\mathrm{s}]{3}{0}\,,
\end{align*}
which gives exactly the harmonic product among multiple zeta values after applying the weight $5$ limit (defined by Definition~\ref{def:wtlim}), since the last term vanishes. 
The shuffle product of $z_2$ and $z_3$ is given by $z_2 \shuffle z_3 = xy \shuffle xxy =  xyxxy + 3 xxyxy + 6 xxxyy =  z_2z_3 + 3 z_3 z_2 + 6 z_4 z_1$. In particular, applying the weight $5$ limit, this implies the shuffle product for multiple zeta values 
\begin{align}\label{eq:shufflez2z3}
    \zeta(2) \zeta(3) = \zeta(2,3) + 3\zeta(3,2) + 6 \zeta(4,1)\,.
\end{align}
The shuffle product~$\oshh$ of $\ms[\mathrm{s}]{2}{0}$ and $\ms[\mathrm{s}]{3}{0}$ is given by
\begin{align*}
\ms[\mathrm{s}]{2}{0} \osh \ms[\mathrm{s}]{3}{0} &= \iota\left( \iota\ms[\mathrm{s}]{2}{0} \ost \iota\ms[\mathrm{s}]{3}{0} \right) = \iota\left(  \frac{1}{2} \ms[\mathrm{s}]{1}{1}   \ost  \ms[\mathrm{s}]{1}{2} \right) , \\
% \end{align*}
% which equals
% \begin{align*}
	&= \frac{1}{2} \iota\left( \ms[\mathrm{s}]{1,1}{1,2}  + \ms[\mathrm{s}]{1,1}{2,1} + \ms[\mathrm{s}]{2}{3}  - \ms[\mathrm{s}]{1}{3}   \right) \\
	&=  \ms[\mathrm{s}]{2,3}{0,0} +  3 \ms[\mathrm{s}]{3,2}{0,0} + 6 \ms[\mathrm{s}]{4,1}{0,0}+ 3 \ms[\mathrm{s}]{4}{1}  - 3 \ms[\mathrm{s}]{4}{0}.
\end{align*}
From the results in Section~\ref{sec:parttomzv} we see that this exactly implies the shuffle product~\eqref{eq:shufflez2z3} by taking the weight $5$ limit (observe that the weight $5$ limit of $\mssmall[\mathrm{s}]{4}{1}$ and of $\mssmall[\mathrm{s}]{4}{0}$ vanish).
\end{example}
\begin{example}\label{ex:pi}
%A natural question is which polynomial functions of partitions have quasimodular $q$-brackets. If $f,g \in \MS$ have quasimodular $q$-brackets then clearly the $q$-bracket of $f\osh g = f \ost g$ is also quasimodular. 
Recall $\Zdegree(f)$ is a rational multiple of an even power of $\pi$ for $f\in \MM$. 
%The limit of functions with quasimodular $q$-bracket is always a rational multiple of a power of $\pi$. 
Therefore, having diagram~\eqref{fig:overview} in mind, given a linear combination of multiple zeta values which is a rational multiple of $\pi^k$, one might ask if there exist a `natural lift' to a polynomial function in $\MS$ whose $q$-bracket is quasimodular.
One such family is given by a special case of Proposition~\ref{prop:pkkkisquasimodular}, since $\mssmall[\mathrm{s}]{2m,\dots,2m}{0,\dots,0}$ is quasimodular for $m\geq 1$ with limit $\zeta(2m,\dotsm,2m) =\zeta(\{2m\}^n)\in \Q \pi^{2mn}$. Another famous family of multiple zeta values which evaluates to a rational multiple of $\pi^k$ are given by the $3$-$1$-formula, which states that for any $n\geq 1$ we have 
\[ \zeta(3,1,\dots,3,1) = \zeta(\{3,1\}^n) = \frac{2 \pi^{4n}}{(4n+2)!}.\] 
The proof of this fact which was first conjectured by Don Zagier, then proven in \cite{BBBL1}, and later reduced to the following argument (\cite{BBBL2}) using the shuffle product. One can show that one has the following identity in the algebra $(\wh^0,\shuffle)$
	\begin{align}\label{eq:3131}
	\sum_{j=-n}^n (-1)^j z_2^{n-j} \shuffle z_2^{n+j} = 4^n (z_3 z_1)^n\,.
	\end{align}
The statement then follows after using $\zeta(2,\dots,2) = \frac{\pi^{2n}}{(2n+1)!}$. We can consider the left-hand side of this expression in our setup and get, by using again Proposition~\ref{prop:pkkkisquasimodular}, that for any $n\geq 1$ the $q$-bracket of 
	\begin{align*}
T(n) \defis	\sum_{j=-n}^n (-1)^j\,  P_\mathrm{s}\bi{\overbrace{2,\dots,2}^{n-j}}{0,\dots,0}  \osh P_\mathrm{s}\bi{\overbrace{2,\dots,2}^{n+j}}{0,\dots,0} \,\in \, \mathbb{M}
	\end{align*}
is quasimodular. Similar to \eqref{eq:3131} this sum evaluates to $4^n P_\mathrm{s}\bi{3,1,\dots,3,1}{0,\dots,0} $ plus some extra terms with vanishing weight $4n$ limit. For example, we have 
{\small 
\begin{align*}
    T(1) \= & 4 P_\mathrm{s}\bi{3,1}{0,0} + 2 P_\mathrm{s}\bi{3}{1} - 2 P_\mathrm{s}\bi{3}{0}\,, \\
% \end{align*}
% and
% \begin{align*}
T(2) \= &  16 P_\mathrm{s}\bi{3,1,3,1}{0,0,0,0} + 8 P_\mathrm{s}\bi{3,3,1}{1,0,0} - 8 P_\mathrm{s}\bi{3,3,1}{0,1,0} - 8 P_\mathrm{s}\bi{3,3,1}{0,0,0} \+\\
&+ 8P_\mathrm{s}\bi{3,1,3}{1,0,1} - 8P_\mathrm{s}\bi{3,1,3}{1,0,0} - 4 P_\mathrm{s}\bi{3,3}{0,2} + 4 P_\mathrm{s}\bi{3,3}{1,1} \+\\
&- 4 P_\mathrm{s}\bi{3,3}{1,0} + 4 P_\mathrm{s}\bi{3,3}{0,0}\,.
\end{align*}}
% n=3 would be this:
% -32*w([3 3 1 3 1], [0 1 0 0 0])-32*w(3,3,1,3,1)+16*w([3 3 3 1], [0 1 1 0])+16*w([3 3 3 1], [0 0 1 0])+16*w(3,3,3,1)-32*w([3 1 3 3 1], [0 0 0 1 0])-32*w(3,1,3,3,1)-16*w([3 3 3 1], [1 0 1 0])-16*w([3 3 3 1], [1 0 0 0])+64*w(3,1,3,1,3,1)+32*w([3 3 1 3 1], [1 0 0 0 0])-16*w([3 3 1 3], [0 1 0 1])+16*w([3 3 1 3], [0 1 0 0])+16*w(3,3,1,3)+32*w([3 1 3 3 1], [0 0 1 0 0])+32*w([3 1 3 1 3], [0 0 0 0 1])-32*w(3,1,3,1,3)-16*w([3 3 1 3], [0 0 0 1])-16*w([3 3 3 1], [0 2 0 0])+16*w([3 3 3 1], [1 1 0 0])+16*w([3 3 1 3], [1 0 0 1])-16*w([3 1 3 3], [0 0 0 2])+16*w([3 1 3 3], [0 0 1 1])+8*w([3 3 3], [0 1 2])-8*w([3 3 3], [0 2 1])-8*w([3 3 3], [1 0 2])+8*w([3 3 3], [1 1 1])+8*w([3 3 3], [0 0 2])-16*w([3 3 1 3], [1 0 0 0])-16*w([3 1 3 3], [0 0 1 0])-8*w([3 3 3], [0 1 1])+8*w([3 3 3], [0 2 0])-8*w([3 3 3], [1 1 0])+16*w(3,1,3,3)+8*w([3 3 3], [1 0 0])-8*w(3,3,3)
\end{example}

\subsection{Bloch--Okounkov relations}\label{sec:BO}
% \jw{Introduce B--O thm., and the idea of using this to find relation. To be written together with the introduction of the paper.}

The exponential isomorphism, introduced in \cite{H2} (see also \cite{HI}), maps any quasi-shuffle algebra to a shuffle algebra on the same words. We give an analogous definition of an exponential map on our space of polynomial functions on partitions. It turns out that the corresponding basis is convenient for expressing shifted symmetric functions as elements of $\MS$ (see, also, \cite{PV}). 

\begin{definition}\label{def:exp}
Given $\vec{m}\in \Z^n$, let $\Aut(\vec{m})=\prod_{j\in \Z} r_j(\vec{m})!$ with $r_j(\vec{m})$ the number of indices $i$ with $m_i=j$. This allows us to define $\exp:\MS\to \MS$ by 
\[\msexpl[\FF]{k_1, \dots , k_r}{d_1,\dots,d_r}{\lambda}= 
\sum_{m_1 \geq \dots \geq m_r > 0} \frac{1}{\Aut(\vec{m})}\prod_{j=1}^{r} m_j^{d_j} f_{k_j}(r_{m_j}(\lambda))\,.\]
\end{definition}
Recall the \emph{Bernoulli polynomial}~$B_k$ ($k\geq 0$) is the unique polynomial such that
\[ \int_{x}^{x+1} B_k(u) \,du \= x^k.\]
Then, $B_k$ is of degree $k$ with as constant term $B_k(0)=B_k$ the $k$-th Bernoulli number.

\begin{proposition}\label{prop:QkinStanleyCoordinates} 
For all $k\geq 2$ we have
\[Q_k \= \frac{B_{k}(\tfrac{1}{2})}{k!}+\sum_{i=0}^{k-2}\sum_{j=0}^{i}\frac{(-1)^{i+j}}{(k-i-1)!} \frac{B_j(\tfrac{1}{2})}{j!}
\msexp[\FF]{1,\ldots,1}{\underbrace{0,\ldots,0}_{i-j},k-1-i}\, \in \MM \,,\]
where $Q_k$ is a shifted symmetric function defined in~\eqref{eq:defQk}.
\end{proposition}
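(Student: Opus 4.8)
The plan is to pass to exponential generating functions in an auxiliary variable~$z$ and compare the two sides coefficient-wise in $z^{k-1}$. First note that every term $\exp P(\cdots)$ on the right has all top-row entries equal to~$1$; since any well-normalized family satisfies $f_1(x)=x$, these functions do not depend on~$\FF$, so I may drop~$\FF$ throughout and use $f_1(r_m(\lambda))=r_m(\lambda)$. From~\eqref{eq:defQk}, written with the content $\lambda_i-i+\tfrac12$, I read off $Q_k(\lambda)-\beta_k=[z^{k-1}]F(\lambda;z)$ where
\[ F(\lambda;z)=\sum_{i\ge1}\bigl(e^{z(\lambda_i-i+1/2)}-e^{z(-i+1/2)}\bigr). \]
Writing $\lambda={\bf r}\times{\bf m}$ in Stanley coordinates and setting $R_s=r_1+\dots+r_s$ (so $R_0=0$, $R_d=\ell(\lambda)$), a geometric sum over the positions in each block gives
\[ F(\lambda;z)=\frac{e^{z/2}}{e^z-1}\,G(z),\qquad G(z):=\sum_{s=1}^d\bigl(e^{zm_s}-1\bigr)\bigl(e^{-zR_{s-1}}-e^{-zR_s}\bigr), \]
and I recognize $\tfrac{e^{z/2}}{e^z-1}=\sum_{j\ge0}\tfrac{B_j(1/2)}{j!}z^{j-1}$ from $\tfrac{t e^{t/2}}{e^t-1}=\sum_j B_j(\tfrac12)\tfrac{t^j}{j!}$.

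On the right, I collect the exponential terms into $\mathcal R(\lambda;z)=\sum_k z^{k-1}\bigl(\mathrm{RHS}_k-\tfrac{B_k(1/2)}{k!}\bigr)$ and substitute $e=k-1-i\ge1$, $a=i-j\ge0$ (so $(-1)^{i+j}=(-1)^a$, $(k-i-1)!=e!$, $z^{k-1}=z^{e+a+j}$). This factors $\mathcal R$ as $\bigl(\sum_j\tfrac{B_j(1/2)}{j!}z^j\bigr)\widetilde H(\lambda;z)$ with $\widetilde H=\sum_{a\ge0}(-z)^a\sum_{e\ge1}\tfrac{z^e}{e!}\msexp{1,\dots,1}{0,\dots,0,e}(\lambda)$. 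Two auxiliary identities are then needed. The first, (A), merely interchanges sums to turn divided powers into an exponential:
\[ \sum_{e\ge0}\frac{z^e}{e!}\,\msexp{1,\dots,1}{0,\dots,0,e}(\lambda)=\sum_{m_1\ge\dots\ge m_{a+1}>0}\frac{1}{\Aut(\vec m)}\,e^{zm_{a+1}}\prod_{t}r_{m_t}(\lambda). \]
The second, (B), is the heart of the argument: I single out the minimal value $\mu=m_s$ with its multiplicity $c\ge1$ and use $\Aut(\vec m)=\prod_v c_v!$ to factor the sub-sum over values $>\mu$ as $\prod_{v>\mu}e^{-zr_v}=e^{-zR_{s-1}}$, while $\sum_{c\ge1}\tfrac{r_\mu^c}{c!}(-z)^{c-1}=\tfrac{1-e^{-zr_\mu}}{z}$. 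Subtracting the $e=0$ contribution, which telescopes to $\tfrac1z(1-e^{-z\ell(\lambda)})$, yields exactly $\widetilde H=G(z)/z$, whence
\[ \mathcal R(\lambda;z)=\frac{z e^{z/2}}{e^z-1}\cdot\frac{G(z)}{z}=\frac{e^{z/2}}{e^z-1}\,G(z)=F(\lambda;z). \]

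Comparing coefficients of $z^{k-1}$ then gives $\mathrm{RHS}_k=\tfrac{B_k(1/2)}{k!}+\bigl(Q_k-\beta_k\bigr)$, and the classical evaluation $B_k(\tfrac12)=(2^{1-k}-1)B_k$ shows $\tfrac{B_k(1/2)}{k!}=\beta_k$, so $\mathrm{RHS}_k=Q_k$. Finally, each $\exp P(\cdots)\in\MS$, so the identity exhibits $Q_k$ as a finite combination of elements of~$\MS$, giving $Q_k\in\MS$; since $\langle Q_k\rangle_q$ is quasimodular by the Bloch–Okounkov theorem ($Q_k\in\Lambda^*$), we conclude $Q_k\in\MM$.

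The hard part will be identity~(B): correctly reorganizing the minimum-weighted, automorphism-normalized sum over Stanley coordinates so that the cumulative multiplicities $R_s$ emerge and the whole expression telescopes. This is the step where the ordered structure of $\exp P$ must be reconciled with the symmetric (value-multiplicity) product formula $\Aut(\vec m)=\prod_v c_v!$, and it is precisely this reorganization that forces the Bernoulli-polynomial factors $B_j(\tfrac12)$ to appear on the right-hand side; everything else is bookkeeping of generating functions and the elementary identity $B_k(\tfrac12)=(2^{1-k}-1)B_k$.
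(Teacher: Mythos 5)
Your argument is correct and is in substance the same as the paper's: both proofs group the rows of $\lambda$ by part value via Stanley coordinates, sum over the position within each block (which is where $B_j(\tfrac12)$ enters --- via the power-sum formula $\sum_{j=1}^n(j-\tfrac12)^m$ in the paper, via the factor $\tfrac{e^{z/2}}{e^z-1}$ in your version), and then reorganize the resulting products of $r_{>m}$ and $r_m(\lambda)$ into $\Aut$-normalized multiset sums by singling out the minimal value with its multiplicity --- your identity~(B) is precisely the paper's displayed formula for $t_b(m,\lambda)$ read through exponential generating functions. The only real difference is packaging: you prove a single identity of formal power series in $z$ and extract the coefficient of $z^{k-1}$ at the end, rather than carrying out the coefficient manipulations degree by degree.
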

\begin{proof}
Given $m>0$, abbreviate $\sum_{m'>m}r_{m'}(\lambda)$ by $r_{>m}\mspace{1mu}$. If $\lambda_i=m$, then $r_{>m}< i \leq r_{>m-1}\mspace{1mu}$, hence
\begin{align}
p_k(\lambda) &\defis \sum_{i\geq 1} \bigl( (\lambda_i-i+\tfrac{1}{2})^k-(-i+\tfrac{1}{2})^k\bigr) \\
&\= \sum_{m=1}^\infty \sum_{j=1}^{r_m(\lambda)}\bigl((m-r_{>m}-j+\tfrac{1}{2})^{k}-(-r_{>m}-j+\tfrac{1}{2})^k\bigr).
\end{align}
Therefore, letting
\[t_b(m,\lambda) \defis \sum_{j=1}^{r_m(\lambda)}(r_{>m}+j-\tfrac{1}{2})^b,\]
one finds
\[p_k(\lambda) \= \sum_{m=1}^\infty\sum_{i=0}^{k-1}\binom{k}{i}m^{k-i}(-1)^i\, t_i(m,\lambda).\]
Expanding $t_b(m,\lambda)$ we have
\begin{align}
t_b(m,\lambda) \= \sum_{j=1}^{r_m(\lambda)}\sum_{l=0}^b\binom{b}{l}(r_{>m})^l(j-\tfrac{1}{2})^{b-l}.
\end{align}
By computing the generating series $\sum_{m\geq 0} \sum_{j=1}^n (j-\tfrac{1}{2})^{m}\frac{z^m}{m!}$, one finds that
\[\sum_{j=1}^n (j-\tfrac{1}{2})^{m} \= \frac{1}{m+1} \sum_{j=0}^m (-1)^j\binom{m+1}{j}B_j(\tfrac{1}{2})\,n^{m+1-j},\]
so that
\begin{align}
t_b(m,\lambda) &\= \sum_{l=0}^b\sum_{j=0}^{b-l}\binom{b}{l}(r_{>m})^l\frac{1}{b-l+1}(-1)^j\binom{b-l+1}{j}B_j(\tfrac{1}{2})r_m(\lambda)^{b-l+1-j}\\
&\= \sum_{l=0}^b\sum_{j=0}^{b-l}\frac{b!}{l!(b-l+1-j)!j!}(-1)^jB_j(\tfrac{1}{2})\left(\sum_{m_1,\ldots,m_l>m}r_{m_1}(\lambda)\cdots r_{m_l}(\lambda)\!\right)r_m(\lambda)^{b-l+1-j}.
\end{align}
Write $\vec{m}=(m_1,\ldots,m_{b-j+1})$. Then,\
\begin{align}
t_b(m,\lambda) &\= \sum_{l=0}^b\sum_{j=0}^{b-l}\frac{b!}{j!}(-1)^jB_j(\tfrac{1}{2})\left(\sum_{\substack{m_1\geq\ldots\geq m_l>m\\m_{l+1}=\ldots=m_{b-j+1}=m}}\!\frac{r_{m_1}(\lambda)\cdots r_{m_{b-j+1}}(\lambda)}{\Aut(\vec{m})}\right)\\
&\= \sum_{j=0}^{b}\frac{b!}{j!}(-1)^jB_j(\tfrac{1}{2})\left(\sum_{\substack{m_1\geq\ldots\geq m_{b-j+1}=m}}\frac{r_{m_1}(\lambda)\cdots r_{m_{b-j+1}}(\lambda)}{\Aut(\vec{m})}\right),\label{eq:t}
\end{align}
where we recall $\Aut(\vec{m})=\prod_{j\in \Z} r_j(\vec{m})!$.
%where $\vec{m}=(m_1,\ldots,m_{b-j+1})$. 
Hence,
\[p_k(\lambda) \= \sum_{i=0}^{k-1}\sum_{j=0}^{i}(-1)^{i+j}\frac{k!}{(k-i)!} \frac{B_j(\tfrac{1}{2})}{j!}
%S\bigg(\ar{\overline{1,\ldots,1}}{\underbrace{0,\ldots,0}_{i-j},k-i}\bigg).
\msexp[\FF]{1,\ldots,1}{\underbrace{0,\ldots,0}_{i-j},k-i}.\]
By definition $Q_k(\lambda)=\beta_k+\frac{p_{k-1}(\lambda)}{(k-1)!}$, which finishes the proof. 
\end{proof}

%\begin{corollary} The difference  
%\[Q_{k}-\sum_{i=0}^{k-2}\frac{(-1)^{i}}{(k-1-i)!} \msexp[\FF]{1,\ldots,1}{\underbrace{0,\ldots,0}_{i-j},k-1-i}\]
% is of weight $\leq k-1$. 
% \[Q_{\vec{k}}-\sum_{i_1=0}^{k_1-2}\cdots \sum_{i_r=0}^{k_r-2}\prod_{j=1}^{r}\frac{(-1)^{i_j}}{(k_j-i_j-1)!}S\bigg(\ar{\overline{1,\ldots,1}}{\underbrace{0,\ldots,0}_{i_j},k_j-1-i_j}\bigg)\]
% is of weight $\leq |\vec{k} |-1$. 
%\end{corollary}

\begin{corollary}\label{cor:bo} We have 
\begin{align}
\sum_{k\geq 2}\sum_{i=0}^{k-2}\frac{(-1)^{i}}{(k-1-i)!} \,\xi({\underbrace{0,\ldots,0}_{i},k-1-i})\,z^k  & \= %\frac{1}{z^2}\left(
\exp\left(\sum_{n\geq 2} \zeta(n)\frac{z^n+(-z)^n}{n}\right)-1
%\right)
\\
&\= 
\sum_{k\geq 2}\sum_{i=0}^{k-2}(-1)^{i}\,\zeta(k-i,\underbrace{1,\ldots,1}_{i})\,z^k.
\end{align}
\end{corollary}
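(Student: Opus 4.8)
The plan is to read all three expressions as the single generating series $\sum_{k\ge 2} Z(Q_k)\,z^k$ of the weight limits of the shifted symmetric functions $Q_k$, evaluated in three different ways: the left-hand side comes from Proposition~\ref{prop:QkinStanleyCoordinates}, the middle is the closed-form evaluation of these limits, and the right-hand side is the classical generating function for height-one multiple zeta values. Here $Z$ is the weight limit of Definition~\ref{def:wtlim}; since $Q_k$ has weight $k$, the quantity $Z(Q_k)$ is well defined, and for odd $k$ it vanishes (consistently with both sides being even in $z$).

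First I would show that the coefficient of $z^k$ on the left equals $Z(Q_k)$. Apply $Z$ to the identity of Proposition~\ref{prop:QkinStanleyCoordinates}. The summand indexed by $(i,j)$ is a multiple of $\msexp{1,\dots,1}{0,\dots,0,k-1-i}$ with bottom row of length $i-j+1$, hence of weight $k-j$; so every term with $j\ge 1$ has weight strictly below $k$ and is killed by $\lim_{q\to1}(1-q)^k$, as is the constant $\tfrac{B_k(1/2)}{k!}$. On the remaining weight-$k$ part (where $j=0$ and $B_0(\tfrac12)=1$) the limit $Z$ is additive, so it suffices to compute $Z\bigl(\msexp{1,\dots,1}{0,\dots,0,k-1-i}\bigr)$. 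Expanding $\exp P$ into the basis $\ms{\cdots}{\cdots}$ of Corollary~\ref{thm:basis}, the strict term is $\ms{1,\dots,1}{0,\dots,0,k-1-i}$, whose degree equals its weight $k$ and whose weight limit is $\xi(0,\dots,0,k-1-i)$ by Corollary~\ref{prop:limitfactors}. Every merged term carries a top entry $\ge 2$ while the unique nonzero bottom entry $k-1-i\ge 1$ sits at or to the right of it, so the vanishing criterion of Lemma~\ref{lem:liminq} forces its weight limit to be zero. Hence $Z(Q_k)=\sum_{i=0}^{k-2}\tfrac{(-1)^i}{(k-1-i)!}\,\xi(0,\dots,0,k-1-i)$, which is exactly the left-hand coefficient of $z^k$.

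Next I would evaluate $\sum_{k\ge 2}Z(Q_k)\,z^k$ in closed form. Because $Q_k\in\MM$, its $q$-bracket is quasimodular of weight $k$ (Bloch--Okounkov), so $Z(Q_k)$ is the honest weight-$k$ limit and a rational multiple of $\pi^k$, computable from the modular transformation of the Bloch--Okounkov one-point function as in Example~\ref{ex:limit}(vi). Packaging these limits through the generating series of the $Q_k$, the $q\to 1$ degeneration is governed by the Weierstrass product $\sin(\pi z)=\pi z\prod_{n\ge1}(1-z^2/n^2)$, equivalently by $\sum_{m\ge1}\tfrac{\zeta(2m)}{m}z^{2m}=\log\tfrac{\pi z}{\sin\pi z}$; since $\tfrac{z^n+(-z)^n}{n}$ vanishes for odd $n$ and is $\tfrac{2z^n}{n}$ for even $n$, this is precisely $\exp\bigl(\sum_{n\ge2}\zeta(n)\tfrac{z^n+(-z)^n}{n}\bigr)$, giving the middle expression (after fixing the normalising power of $z$ and sign). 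The equality of the middle with the right-hand side is then the classical Aomoto--Drinfeld generating-function identity for the height-one values $\zeta(k-i,1,\dots,1)$, specialised to $(x,y)=(z,-z)$ so that $x^n+y^n-(x+y)^n$ collapses to $z^n+(-z)^n$; this is the height-one case of the Ohno--Zagier relations mentioned in the introduction, and I would either invoke it or re-derive it from the shuffle regularisation of powers of $z_1$ against $z_{k-i}$.

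I expect the analytic step of the third paragraph to be the main obstacle: extracting the $q\to 1$ asymptotics of the quasimodular one-point function uniformly in the auxiliary variable $z$ and matching it to $\tfrac{\pi z}{\sin\pi z}$, together with the careful bookkeeping of the constants $\beta_k$ and of the exact normalising power of $z$ and overall sign. By contrast the first paragraph is purely combinatorial once the vanishing of the merged terms is verified via Lemma~\ref{lem:liminq}, and the identification in the third paragraph with the height-one generating function is classical.
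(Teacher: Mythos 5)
Your treatment of the first equality coincides with the paper's: apply the weight limit $Z$ to Proposition~\ref{prop:QkinStanleyCoordinates}, use $Z\exp f=Zf$ so that only the $j=0$ terms survive (the $(i,j)$-term has weight $k-j$), and identify the surviving weight limits as $\xi(0,\ldots,0,k-1-i)$ via Corollary~\ref{prop:limitfactors}, with the merged terms killed by the vanishing criterion of Lemma~\ref{lem:liminq} exactly as you argue. Likewise, both you and the paper get the closed form in the middle from the Bloch--Okounkov theorem (the constant term of the Jacobi one-point function, i.e.\ your $\pi z/\sin\pi z$ computation). Where you genuinely diverge is the second equality. You propose to identify the middle with the right-hand side by invoking the classical Aomoto--Drinfeld/Ohno--Zagier height-one generating function at $(x,y)=(z,-z)$. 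That is logically sufficient for the stated chain of equalities, but it imports precisely the identity the corollary is designed to reprove: the remark immediately after the corollary records that the second equality \emph{is} the $s=1$, $x=-y$ case of Ohno--Zagier. The paper instead stays inside the framework: it applies the conjugation $\iota$ to the expression for $Q_k$ using the explicit formula of Remark~\ref{rk:iotaexplicit}, which transposes the surviving words into $\msexp[\mathrm{s}]{k-i,1,\ldots,1}{0,0,\ldots,0}$ up to lower-weight terms, then uses $\langle\iota(f)\rangle_q=\langle f\rangle_q$ and takes the weight limit once more, so that the right-hand side drops out of Corollary~\ref{prop:limitfactors} as $\sum_i(-1)^i\zeta(k-i,1,\ldots,1)$. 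The paper's route buys an independent, partition-theoretic proof of the height-one Ohno--Zagier identity (from transposition of Young diagrams); your route buys a shortcut around the combinatorics of $\iota_{\mathrm{s}}$, at the cost of the corollary's main point.

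On the normalisation you flag as delicate: your caution is justified, because the middle expression as printed cannot be right. The series $\frac{1}{z^2}\bigl(1-\exp(\cdots)\bigr)=\frac{1}{z^2}\bigl(1-\frac{\pi z}{\sin\pi z}\bigr)$ begins with $-\zeta(2)z^0$, whereas both outer expressions begin with $\zeta(2)z^2$. The intended closed form is $\exp\bigl(\sum_{n\ge2}\zeta(n)\frac{z^n+(-z)^n}{n}\bigr)-1=\frac{\pi z}{\sin\pi z}-1$, whose coefficients $\zeta(2)$ at $z^2$ and $\frac{7\pi^4}{360}=\zeta(4)-\zeta(3,1)+\zeta(2,1,1)$ at $z^4$ match both sides; the prefactor $z^{-2}$ and the overall sign are typos rather than features your argument needs to reproduce.
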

\begin{proof}
The first equality follows directly from Proposition~\ref{prop:QkinStanleyCoordinates} by taking the weight limit. Note that for all $f\in \MS$ we have $Z\exp f=Z f$. 
Moreover, for computing the weight limit of $Q_k$ we make use of the Bloch--Okounkov theorem, which expresses the generating series of the~$Q_k$ as a certain Jacobi form %of which one easily computes the constant term 
\cite[Theorem~6.1]{BO}, i.e.,
\[
\sum_{k\geq 0} \langle Q_k\rangle_q\, z^{k-1} \= \frac{1}{\Theta(q,z)} \defis \frac{1}{z}\exp\biggl(2\sum_{\substack{k\geq 2 \\ k \text{ even}}}  G_k(q) \,\frac{z^k}{k} \biggr).
\]
Recall that the Eisenstein series in this work are normalized such that the weight limit of~$G_k$ equals~$\zeta(k)$. 

For the second equality, we combine Proposition~\ref{prop:QkinStanleyCoordinates} with the explicit formula for~$\iota$ in Remark~\ref{rk:iotaexplicit} in order to find that
\begin{align}\label{eq:iotaQk}\iota(Q_k) \= \frac{B_{k}(\tfrac{1}{2})}{k!}\+\sum_{i=0}^{k-2}\sum_{j=0}^{i} \frac{(-1)^{i+j} B_j(\tfrac{1}{2})}{j!}\,
\msexp[\mathrm{s}]{k-i,1,\ldots,1}{0,\underbrace{0,\ldots,0}_{i-j}}
%S\bigg(\ar{\overline{k-i,1,\ldots,1}}{0,\underbrace{0,\ldots,0}_{i-j}}\bigg)
.
\end{align}
Then, similarly, the second equality follows by taking the weight limit.
%Hence, the second equality follows from the Bloch--Okounkov theorem after taking the $q$-bracket, multiplying by $(1-q)^k$ and taking the limit $q\to 1$.
\end{proof}

\begin{remark}\mbox{}\\[-12pt] 
\begin{enumerate}[(i), leftmargin=*]\itemsep3pt
    \item The second equality in the above formula is a special case of the Ohno--Zagier relations \cite[Theorem~1]{OZ} after setting $s=1$ and $x=-y$, which goes back to unpublished results of Zagier from 1995. 
    \item  By calculating the $q$-bracket (instead of the weight limit) of both sides of the equality in Proposition~\ref{prop:QkinStanleyCoordinates} one obtains Ohno--Zagier relations for $q$-analogues of multiple zeta values. These might give special cases of the relations proven in \cite{OT}. 
\end{enumerate}
\end{remark}

The relation in the corollary above follows from exploiting the facts that $\langle Q_k\rangle_q$ is a quasimodular form, as well as  that $Q_k$ is a polynomial function on partitions. The same holds true for any shifted symmetric function, i.e., any polynomial in the generators~$Q_k\mspace{1mu}$. We illustrate this by considering $Q_4Q_3$:
\begin{example}
%We have
%\[ Q_3 = -\ms{1,1}{0,1}-\frac{1}{2}\ms{2}{1}+\frac{1}{2}\ms{1}{2}. \]
Recall that by computing the weight limit of $Q_3$ and $Q_4\mspace{1mu}$, cf.~Corollary~\ref{cor:bo}, we obtain $\zeta(3)-\zeta(2,1)=0$ and
$\zeta(4)-\zeta(3,1)+\zeta(2,1,1)=0$ respectively.
If we would proceed in the same way computing the weight limit of $Q_3Q_4\mspace{1mu}$, we find (the trivial statement) that the shuffle product of $\zeta(3)-\zeta(2,1)$ and $\zeta(4)-\zeta(3,1)+\zeta(2,1,1)$ vanishes as well. We can, however, do more. 

Instead, consider
\[ Q_4Q_3-Q_4\ost Q_3\,. \]
The $q$-bracket of this function vanishes, as can be seen from the fact that $Q_3(\lambda) = -\omega(Q_3)(\lambda)$.  The degree is equal to $6$, whereas the weight is $7$. Computing the degree-$6$ limit we obtain
\begin{align} -10&\zeta(5)+\zeta(2,3)+3\zeta(3,2)+16\zeta(4,1)+\\
&-16\zeta(3,1,1)-3\zeta(2,2,1)-\zeta(2,1,2)+10\zeta(2,1,1,1)=0,\end{align}
a formula which easily follows from the duality relations for multiple zeta values. Hence, we see that not only the weight-limits, but also the degree-limits of polynomial functions on partitions give rise to interesting (families of) relations between multiple zeta values. 
\end{example}

\subsection{Zagier's arm-leg moments}\label{sec:armbein}
\newcommand{\ua}{\underline{a}}
\newcommand{\ub}{\underline{b}}

In \cite[Theorem 8]{Z} it is shown that for any even polynomial $g \in \Q[x,y]$ the $q$-bracket of the function $ \mathcal{A}_g: \PP \rightarrow \Q$ defined by 
\begin{equation}\label{eq:defAP}
    \mathcal{A}_g(\lambda) \defis \sum_{\xi\in Y_\lambda} g( \ua(\xi), \ub(\xi) ) 
\end{equation}
has a quasimodular $q$-bracket. Here the sum is over all cells $\xi$ of the Young diagram of $\lambda$ and $\ua(\xi)=a(\xi)+\frac{1}{2}$, $\ub(\xi)=b(\xi)+\frac{1}{2}$, where $a(\xi)$ and $b(\xi)$ denote the arm- and leg-lengths of the cell $\xi$. 

We now show that in fact the function $\mathcal{A}_g\in \MM$, and that by computing its weight limits we obtain the sum formula for multiple zeta values. 
\begin{theorem} For all polynomials $g\in \Q[x,y]$, we have $\mathcal{A}_g\in \MM$ and the weight of~$\mathcal{A}_g$ is $\leq \deg g+2$. 
For $g(x,y)=\frac{x^a y^b}{a!b!}$ the difference
\begin{align}\label{eq:AP}\mathcal{A}_g \meno \sum_{i=0}^a (-1)^{a-i} \frac{1}{i!(a+1-i)!}
\msexp[\FF]{1,\ldots,1}{i,\underbrace{0,\ldots,0}_{b-1},a-i+1}
%P\bigg(\ar{\overline{1,\ldots,1}}{i,\underbrace{0,\ldots,0}_{b-1},a-i+1}\bigg) 
\end{align}
is of weight $\leq \deg g + 1$. Here, $\exp:\MS\to\MS$ is defined in Definition~\ref{def:exp}.
\end{theorem}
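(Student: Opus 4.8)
The plan is to reduce to monomials and compute $\mathcal A_g$ directly in Stanley coordinates, in close analogy with the proof of Proposition~\ref{prop:QkinStanleyCoordinates}. Since $g\mapsto\mathcal A_g$ is $\Q$-linear and $\deg(g_1+g_2)\le\max(\deg g_1,\deg g_2)$, it suffices to treat $g(x,y)=\frac{x^ay^b}{a!b!}$ and to prove the displayed identity \eqref{eq:AP}: once this is known, both $\mathcal A_g\in\MS$ and the bound $\wt(\mathcal A_g)\le\deg g+2$ follow, because each subtracted term $\msexp[\FF]{1,\ldots,1}{i,0,\ldots,0,a-i+1}$ has depth $b+1$, all top entries equal to $1$, and bottom entries summing to $a+1$, hence weight $a+b+2$ by Corollary~\ref{thm:basis}, while the difference \eqref{eq:AP} has weight $\le a+b+1$. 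First I would rewrite the cell sum using $\lambda=(m_1^{r_1},\dots,m_d^{r_d})$ with $m_1>\dots>m_d>0$ and $m_{d+1}:=0$. A cell $\xi$ in the $t$-th row of value $m_s$ and in a column $j$ with $m_{s'+1}<j\le m_{s'}$ (so $s'\ge s$) has $\ua(\xi)=m_s-j+\tfrac12$ and $\ub(\xi)=(r_s+\dots+r_{s'})-t+\tfrac12$, which gives
\[
\mathcal A_g(\lambda)=\frac1{a!\,b!}\sum_{s=1}^d\sum_{s'=s}^d\sum_{t=1}^{r_s}\Bigl(\sum_{j=m_{s'+1}+1}^{m_{s'}}(m_s-j+\tfrac12)^a\Bigr)\bigl((r_s+\dots+r_{s'})-t+\tfrac12\bigr)^b .
\]

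Next I would evaluate the two inner sums with the power-sum identity $\sum_{j=1}^{n}(j-\tfrac12)^m=\frac1{m+1}\sum_{l\ge0}(-1)^l\binom{m+1}{l}B_l(\tfrac12)\,n^{m+1-l}$ from Proposition~\ref{prop:QkinStanleyCoordinates}. The leading ($l=0$) contributions are $\frac1{a+1}\bigl[(m_s-m_{s'+1})^{a+1}-(m_s-m_{s'})^{a+1}\bigr]$ for the arm sum and $\frac1{b+1}\bigl[(r_s+\dots+r_{s'})^{b+1}-(r_{s+1}+\dots+r_{s'})^{b+1}\bigr]$ for the leg sum. Expanding the arm factor binomially produces the monomials $m_s^{\,i}m_{s'+1}^{\,a+1-i}$, and summing the leg factor over $s$ while reorganising the nested multiplicity sums with the weights $1/\Aut(\vec m)$ — exactly the passage from \eqref{eq:t} to the $\exp$-basis in Proposition~\ref{prop:QkinStanleyCoordinates} — assembles these into $\msexp[\FF]{1,\ldots,1}{i,0,\ldots,0,a-i+1}$. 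Tracking constants via $\frac1{a!}\cdot\frac1{a+1}\binom{a+1}{i}=\frac1{i!\,(a+1-i)!}$ recovers the coefficient $(-1)^{a-i}/(i!\,(a+1-i)!)$ of \eqref{eq:AP} (the sign coming from the binomial expansion of $(m_s-m_{s'})^{a+1}$). Every remaining term — the Bernoulli tails $l\ge1$, and the boundary terms in which two part-values coincide — lowers the $m$-degree, or the $r$-degree (equivalently the depth), by at least one and therefore lies in $\MS_{\le a+b+1}$; this is what must be verified to obtain the weight-$(a+b+1)$ remainder.

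With \eqref{eq:AP} established, $\mathcal A_g\in\MS$ and $\wt(\mathcal A_g)\le\deg g+2$ for every $g$ by linearity. For the membership $\mathcal A_g\in\MM$ it then remains to show that $\langle\mathcal A_g\rangle_q$ is quasimodular, which is precisely \cite[Theorem~8]{Z} (whose hypothesis is where the evenness of $g$ is used); combined with $\mathcal A_g\in\MS$ this yields $\mathcal A_g\in\MM$.

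The main obstacle is the bookkeeping in the second step. The arm and the leg of a cell are coupled through the shared column index $j$, so the two inner sums cannot be separated naively; moreover the symmetrised sums $\msexp[\FF]{1,\ldots,1}{\cdots}$ allow coincidences $m_i=m_j$ through the factor $1/\Aut(\vec m)$, so matching the output of the computation to the $\exp$-basis while simultaneously checking that all collapse terms genuinely drop weight requires care. I expect the cleanest route is to package both inner sums at once through the generating series $\sum_{\xi}e^{X\ua(\xi)+Y\ub(\xi)}$, evaluate the resulting geometric sums in closed form, and read off the coefficient of $X^aY^b$; this mirrors the generating-function manipulation in Proposition~\ref{prop:QkinStanleyCoordinates} and isolates the top-weight part from the remainder.
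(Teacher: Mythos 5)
Your proposal is correct and follows essentially the same route as the paper: parametrize the cells of $Y_\lambda$ by part values and multiplicities, evaluate the arm and leg sums via the power-sum polynomials $\tilde{\mathrm{s}}_{k}$, pass to the $\Aut$-weighted $\exp$-basis exactly as in the proof of Proposition~\ref{prop:QkinStanleyCoordinates}, and extract the top-weight part of $\tilde{\mathrm{s}}_{a+1}(u)-\tilde{\mathrm{s}}_{a+1}(u-v)$ by a binomial expansion. The only real differences are cosmetic: the paper sums the arm over all columns $1\le y\le m_{b-j+1}$ at once, whereas your block-by-block version needs an extra telescoping over the block index $s'$ (which does work out and reassembles $m_s^{a+1}-(m_s-m_{s''})^{a+1}$), and both you and the paper ultimately delegate the quasimodularity statement $\mathcal{A}_g\in\MM$ to Zagier's Theorem~8.
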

\begin{proof}
Suppose $\xi=(x,y)\in Y_\lambda\mspace{1mu}$. Then, we write $m=\lambda_x$ and $j=1-x+r_{\geq m}(\lambda)$, where we denote $r_{\geq \ell}(\lambda) = \sum_{\ell'\geq \ell} r_{\ell'}(\lambda)$. Note that $1\leq j \leq r_{m}(\lambda)$.
We can rewrite~\eqref{eq:defAP} as
\begin{align}
    \mathcal{A}_g(\lambda) \= \sum_{m \geq y \geq 1} \sum_{j=1}^{r_{m}(\lambda)} g\bigl(m - y + \tfrac{1}{2}, r_{\geq y}(\lambda)-r_{\geq m}(\lambda) + j - \tfrac{1}{2} \bigr)\,.
\end{align}
In particular we see that for any polynomial $g$ we have $\mathcal{A}_g \in \MS$. 

Analogous to \eqref{eq:t}, for $b\geq 0$ we have
\begin{multline}
\sum_{j=1}^{r_m(\lambda)}(r_{\geq y}(\lambda)-r_{\geq m}(\lambda) + j - \tfrac{1}{2})^b \\
% &= \sum_{j=1}^{r_m(\lambda)}\sum_{\ell=0}^b \binom{b}{l}(r_{\geq y}-r_{\geq m})^l (j-\tfrac{1}{2})^{b-l} \\
% &=\sum_{\ell=0}^b\sum_{j=0}^{b-l}\binom{b}{l}(r_{\geq y}-r_{\geq m})^l\frac{1}{b-\ell+1} (-1)^{j}\binom{b-\ell+1}{j} B_j(\tfrac{1}{2})r_m(\lambda)^{b-\ell+1-j} \\
% &= \sum_{\ell=0}^b \sum_{j=0}^{b-l} \frac{b!}{\ell!(b-\ell+1-j)!j!}(-1)^{+j}B_j(\tfrac{1}{2})\left(\sum_{m>m_1,\ldots,m_\ell\geq y} r_{m_1}(\lambda)\cdots r_{m_\ell}(\lambda)\right) r_m(\lambda)^{b-\ell+1-j}\\
% &= \sum_{\ell=0}^b \sum_{j=0}^{b-\ell}\frac{b!}{j!}(-1)^{j}B_j(\tfrac{1}{2})
% \left(\sum_{\substack{m>m_1\geq\ldots\geq m_l\geq y\\m_{l+1}=\ldots=m_{b-j+1}=m}}\frac{r_{m_1}(\lambda)\cdots r_{m_{b-j+1}}(\lambda)}{\Aut(\vec{m})}\right)\\
\= \sum_{j=0}^{b}\frac{b!}{j!}(-1)^jB_j(\tfrac{1}{2})\left(\sum_{\substack{m=m_1\geq\ldots\geq m_{b-j+1}\geq y}}\frac{r_{m_1}(\lambda)\cdots r_{m_{b-j+1}}(\lambda)}{\Aut(\vec{m})}\right).
\end{multline}
Hence, in the special case $g(x,y)=\frac{x^ay^b}{a!b!}$ we get
\begin{align}\mathcal{A}_g &= 
\sum_{j=0}^{b}\sum_{\substack{m_1\geq\ldots\geq m_{b-j+1}\geq y\geq 1}} \frac{(-1)^j}{a!j!}B_j(\tfrac{1}{2}) (m_1-y+\tfrac{1}{2})^a \frac{r_{m_1}\cdots r_{m_{b-j+1}}}{\Aut(\vec{m})} \\
&= \sum_{j=0}^{b}\sum_{\substack{m_1\geq\ldots\geq m_{b-j+1}\geq 1}} \frac{(-1)^j}{a!j!}B_j(\tfrac{1}{2})(\tilde{\mathrm{s}}_{a+1}(m_1)-\tilde{\mathrm{s}}_{a+1}(m_1-m_{b-j+1})) \frac{r_{m_1}\cdots r_{m_{b-j+1}}}{\Aut(\vec{m})}, \label{eq:ApinMS}
\end{align}
where $\tilde{\mathrm{s}}_{a+1}(n) = \sum_{i=1}^n (i-\tfrac{1}{2})^a.$ As $\tilde{\mathrm{s}}_{a+1}(n)$ is a polynomial in $n$ of degree $a+1$ and with leading coefficient $\frac{1}{a+1}$, we have that
\begin{align}\label{eq:highdegree}\tilde{\mathrm{s}}_{a+1}(u)-\tilde{\mathrm{s}}_{a+1}(u-v) &\= \frac{1}{a+1}\sum_{i=0}^a \binom{a+1}{i} (-1)^{a-i} u^i v^{a-i+1} + \ldots,\end{align}
where the ommited terms are of total degree at most $a$. Substituting~\eqref{eq:highdegree} in~\eqref{eq:ApinMS} for $u=m_1$ and $v=m_b$ gives the desired result. 
\end{proof}

\begin{corollary}[Sum formula] For all $a,b\geq 0$, we have
\begin{align}\label{eq:sumformula}
\sum_{i=0}^a \frac{(-1)^{a-i}}{i!(a+1-i)!}\,
\xi(i,\underbrace{0,\ldots,0}_{b-1},a-i+1)&\=\zeta(a+b+2) \\
&\=\!\sum_{\substack{k_1+\ldots+k_{b+1}=a+b+2\\ k_1\geq 2,\,k_i\geq 1}}\!\! \zeta(k_1,\ldots,k_{b+1}).
\end{align}
\end{corollary}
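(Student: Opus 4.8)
The plan is to compute the weight limit $Z(\mathcal{A}_g)$ for $g(x,y)=\tfrac{x^ay^b}{a!b!}$ in several ways and to read the two asserted equalities off the results. By the preceding theorem $\mathcal{A}_g\in\MM$ is of weight $\le a+b+2$, and it differs from
\[
\sum_{i=0}^a (-1)^{a-i}\frac{1}{i!(a+1-i)!}\,
\msexp[\FF]{1,\ldots,1}{i,\underbrace{0,\ldots,0}_{b-1},a-i+1}
\]
by a polynomial function of weight $\le a+b+1$. Since $Z$ is taken at weight $a+b+2$, this lower-weight remainder has degree $\le a+b+1$ and hence drops out, so $Z(\mathcal{A}_g)$ equals the weight limit of the displayed sum. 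Recalling that $Z\exp f=Zf$ and that each word $\ai{1,\ldots,1}{i,0,\ldots,0,a-i+1}$ is admissible because its last lower entry $a-i+1\ge 1$, Corollary~\ref{prop:limitfactors} evaluates each summand as a conjugated zeta value (the $\zeta$-factor being empty), giving $Z(\mathcal{A}_g)=\sum_{i=0}^a\tfrac{(-1)^{a-i}}{i!(a+1-i)!}\,\xi(i,\underbrace{0,\ldots,0}_{b-1},a-i+1)$, which is the first expression in the statement.

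The next step is to evaluate this same weight limit as the single value $\zeta(a+b+2)$. Here I would use that $\langle\mathcal{A}_g\rangle_q$ is quasimodular (Zagier, \cite[Theorem~8]{Z}) and extract its leading behaviour as $q\to1$: the top-weight part is a rational multiple of the Eisenstein series $G_{a+b+2}$, and since $\Zdegree(G_k)=\zeta(k)$ the weight limit is $\zeta(a+b+2)$. As in the proof deriving the Ohno--Zagier relation above, the relevant constant is read off from the constant term of the (quasi-)Jacobi form generating the $\mathcal{A}_g$. This establishes the first equality.

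For the second equality I would pass to the conjugate. By Proposition~\ref{prop:doubleshuffle} the $q$-bracket, and hence the weight limit, is $\iota$-invariant, so $Z(\iota\mathcal{A}_g)=Z(\mathcal{A}_g)=\zeta(a+b+2)$. The involution $\iota$ is the transpose symmetry interchanging arm- and leg-lengths; applying the explicit description of $\iota$ from Remark~\ref{rk:iotaexplicit} (equivalently Proposition~\ref{prop:iotags}) to the $\exp P$-expansion above converts the all-ones top rows into genuine depth-$(b+1)$ indices with first entry $\ge2$ and all lower entries $0$. Corollary~\ref{prop:limitfactors} then evaluates the weight limit of the conjugated sum as $\sum_{k_1+\cdots+k_{b+1}=a+b+2,\ k_1\ge2}\zeta(k_1,\ldots,k_{b+1})$, exactly as the earlier corollary used $\iota(Q_k)$ to turn conjugated zeta values into honest multiple zeta values. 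Comparing the two evaluations of $Z(\mathcal{A}_g)$ yields $\zeta(a+b+2)=\sum\zeta(k_1,\ldots,k_{b+1})$, i.e.\ the sum formula.

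The main obstacle is the middle step: showing that the weight limit of the quasimodular form $\langle\mathcal{A}_g\rangle_q$ is \emph{exactly} the single value $\zeta(a+b+2)$ rather than a general element of $\Q[\zeta(2)]$. This is where Zagier's explicit generating series and a careful constant-term computation are unavoidable, and where the parity of $g$ (the evenness needed for honest quasimodularity) must be tracked. The remaining work---verifying in the $\iota$-expansion that precisely the admissible depth-$(b+1)$ words of weight $a+b+2$ with $k_1\ge2$ survive, each with coefficient $1$---is routine but requires careful bookkeeping with the combinatorics of Remark~\ref{rk:iotaexplicit}.
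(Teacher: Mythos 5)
Your overall skeleton coincides with the paper's: evaluate the weight limit $Z(\mathcal{A}_g)$ for $g(x,y)=x^ay^b/(a!b!)$ in two ways to obtain the first equality, then use $\iota$-invariance for the second. Your first step (discarding the weight-$\leq a+b+1$ remainder and applying Corollary~\ref{prop:limitfactors} with empty $\zeta$-factor) is exactly what the paper does. Your last step is also the paper's argument in substance; the bookkeeping you defer is handled there by a one-line generating-series computation: the left-hand side of~\eqref{eq:sumformula} is the coefficient of $X^{a+1}$ in $\zeta\bigl(\gena\bi{0,\dots,0}{X,0,\dots,0}-\gena\bi{0,\dots,0}{X,0,\dots,0,-X}\bigr)$, and Proposition~\ref{prop:iotags} together with the $\iota_{\mathrm{s}}$-invariance of Theorem~\ref{thm:bimzv} turns this into $\zeta\bigl(\gena\bi{X,\dots,X}{0,\dots,0}-\gena\bi{0,X,\dots,X}{0,\dots,0}\bigr)$, whose $X^{a+1}$-coefficient is visibly the sum over all admissible indices of weight $a+b+2$ and depth $b+1$, each with coefficient $1$.

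The genuine gap is exactly where you locate it, namely $Z(\mathcal{A}_g)=\zeta(a+b+2)$, and the route you sketch does not close it. Quasimodularity of $\langle\mathcal{A}_g\rangle_q$ is available via \cite[Theorem 8]{Z} only for \emph{even} $g$, i.e.\ only when $a+b$ is even; already $\mathcal{A}_x$, whose $q$-bracket equals $\tfrac12\sum_{N}(\sigma_2(N)-\sigma_0(N))q^N$, is not quasimodular, so the "leading Eisenstein coefficient" argument has no meaning for half the cases of the sum formula. Even when $a+b$ is even, pinning the top-weight part down to exactly $G_{a+b+2}$ with coefficient $1$ is precisely the computation you are hoping to outsource. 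The paper avoids quasimodularity altogether: it quotes the explicit Lambert-series identity from \cite[p.~367]{Z},
\[
\langle\mathcal{A}_g\rangle_q=\sum_{n\geq 1}\Bigl(\sum_{i=1}^{n-1}g(i+\tfrac12,\,n-i-\tfrac12)\Bigr)\frac{q^n}{1-q^n},
\]
valid for \emph{all} $g\in\Q[x,y]$, observes that for $g=x^ay^b/(a!b!)$ the inner sum is $\frac{n^{a+b+1}}{(a+b+1)!}+O(n^{a+b})$, and reads off $Z(\mathcal{A}_g)=\zeta(a+b+2)$ from Lemma~\ref{lem:lim1}. Supplying this formula (and dropping the quasimodularity detour) is the one ingredient your proof is missing; with it, the rest of your argument goes through.
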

\begin{proof}
We compute the weight limit of~\eqref{eq:AP}. By \cite[p.~367]{Z} we have
\[ \langle \mathcal{A}_g \rangle_q \= \sum_{n=1}^\infty \sum_{i=0}^{n-1} g(i+\tfrac{1}{2},n-i-\tfrac{1}{2})\, \frac{q^n}{1-q^n}\]
for all $g\in \Q[x,y]$. Now let $g(x,y)=\frac{x^ay^b}{a!b!}$. Then, as
\[\sum_{i=0}^{n-1}g(i+\tfrac{1}{2},n-i-\tfrac{1}{2}) \= \frac{1}{(a+b+1)!}n^{a+b+1}+ O(n^{a+b}),\]
we find $Z(\mathcal{A}_g) = \zeta(a+b+2),$ which proves the first equality. 

The second equality follows by applying the involution $\iota$. Namely, 
the left hand side of~\eqref{eq:sumformula} is the coefficient of $X^{a+1}$ in 
\begin{align*}
    \zeta\left( \gena \bi{0,0,\dots,0}{X,0,\dots,0} -	\gena \bi{0,0,\dots,0,0}{X,0,\dots,0,-X} \right),
\end{align*}
where $\gena$ is defined by~\eqref{eq:gs}. By Proposition~\ref{prop:iotags}, together with Theorem~\ref{thm:bimzv} by which we know that $\zeta$ is $\iota_{\rm{s}}$-invariant, this equals
\begin{align*}
    \zeta\left( \gena \bi{X,\dots,X}{0,0,\dots,0} -	\gena \bi{0,X,\dots,X}{0,\dots,0} \right) \,.
\end{align*}
The coefficients of $X^{a+1}$ in the latter expression is the sum over all admissible multiple zeta values in weight ${a+b+2}$.
\end{proof}

% \begin{lemma}
% Let $p(x) \= \sum_{i\geq 0} a_i\,x^i \in \R[x]$ be a polynomial such that
% \begin{align}\label{eq:ass} \sum_{i=0}^j a_i \geq 0 \qquad \text{for all }j.\end{align}
% Then, $p(x)\geq 0$ for $x\in [0,1]$. 
% \end{lemma}
% \begin{proof}
% We prove the result by induction on the degree of the polynomial. If the degree is zero there is nothing to prove. Now, let $p(x)$ as in the statement be given, and say $p$ is of degree $d$. Note that if $a_d$ is positive, then
% \[ p(x) \= (p(x)-a_d x^d) \+ a_d x^d \geq 0 \qquad (x\in [0,1]),\]
% since $p(x)-a_d\,x^d$ is a polynomial of degree $d$ satisfying the assumption~\eqref{eq:ass}. 
% Now, observe that also
% $p(x) \+ a_d x^{d-1} \meno a_d x^d$
% is of degree $d-1$ and satisfies the hypothesis~\eqref{eq:ass}. Hence,
% \[ p(x) \= (p(x) + a_d x^{d-1} - a_d x^d) \+ (-a_d x^{d-1} + a_d x^d) \,\geq \, 0 \qquad (x\in [0,1])\]
% if $a_d<0$.
% \end{proof}


\begin{thebibliography}{BBBL2}

\bibitem[AF]{PV}  P.~Alexandersson, V.~F\'{e}ray
\newblock {\itshape Shifted symmetric functions and multirectangular coordinates of {Y}oung diagrams},
\newblock J. Algebra {\bf 483} (2017), 262--305.

\bibitem[B]{B} H.~Bachmann:
\newblock {\itshape The algebra of bi-brackets and regularized multiple Eisenstein series},
\newblock J. Number Theory, {\bf 200} (2019), 260--294.

\bibitem[B2]{B2}  H.~Bachmann:
\newblock {\itshape Multiple zeta values and modular forms}, 
\newblock Lecture notes (Ver.~5.4), Nagoya University, 2020. Available online at \url{https://henrikbachmann.com/mzv2020.html}.

\bibitem[BI]{BI} H.~Bachmann, J.-W.~van~Ittersum:
\newblock {\itshape Formal multiple Eisenstein series and their derivations}. \newblock In preparation.

\bibitem[BK]{BK}  H.~Bachmann, U.~K\"uhn:
\newblock {\itshape The algebra of generating functions for multiple divisor sums and applications to multiple zeta values},
\newblock Ramanujan J. {\bf 40} (2016), 605--648. 

\bibitem[BK2]{BK2}  H.~Bachmann, U.~K\"uhn:
\newblock {\itshape A dimension conjecture for $q$-analogues of multiple zeta values}, 
\newblock Periods in Quantum Field Theory and Arithmetic, Springer Proc. Math. Stat. {\bf 314} (2020),  237--258.

\bibitem[BKM]{BKM}  H.~Bachmann, U.~K\"uhn, N.~Matthes:
\newblock {\itshape Realizations of the formal double Eisenstein space}, \newblock preprint, \href{https://arxiv.org/abs/2109.04267}{arXiv:2109.04267} (2021), 16 pp.

\bibitem[BO]{BO} S.~Bloch, A.~Okounkov:
\newblock {\itshape The character of the infinite wedge representation},
\newblock  Adv. Math. {\bf 149}:1 (2000), 1--60

\bibitem[BBBL]{BBBL1} J. M. Borwein, D. M. Bradley, D. J. Broadhurst and P. Lisonek: 
\newblock {\itshape Special Values of Multidimensional Polylogarithms}, \newblock Trans. Amer. Math. Soc. {\bf 353} (2001), 907--941.

\bibitem[BBBL2]{BBBL2} J. M. Borwein, D. M. Bradley, D. J. Broadhurst and P. Lisonek:
\newblock {\itshape  Combinatorial aspects of multiple zeta values},
\newblock Electronic Journal of Combinatorics {\bf 5} (1998), 12 pp.

\bibitem[Bra]{Bra} D. M.~Bradley:
\newblock {\it Multiple q-zeta values}, 
\newblock J. Algebra {\bf 283} (2005), 752--798.
		
\bibitem[Bri]{Bri1} B. Brindle: \newblock {\itshape{Dualities of $q$-analogues of multiple zeta values}},
\newblock Master thesis, Universit\"at Hamburg, \url{https://sites.google.com/view/benjamin-brindle/}, 2021.

\bibitem[Bri2]{Bri2} B. Brindle: \newblock {\itshape{A unified approach to qMZVs}}, 
\newblock preprint, 	\href{https://arxiv.org/abs/2111.00051}{arXiv:2111.00051} (2021), 37 pp.

\bibitem[CMZ]{CMZ}
D.~Chen, M.~M\"{o}ller, and D.~Zagier.
\newblock {\itshape Quasimodularity and large genus limits of {S}iegel-{V}eech constants},
\newblock {\em J. Amer. Math. Soc.}, {\bf 31}:4 (2018), 1059--1163.

\bibitem[D]{Dij} R.~Dijkgraaf.
\newblock {\itshape 
Mirror symmetry and elliptic curves},
\newblock In {\em The moduli space of curves (Texel Island, 1994)}, 149-–163,
Progr. Math., 129, Birkhäuser Boston, Boston, MA, 1995.

\bibitem[DGZ]{DGZ}
V.~Delecroix, E.~Goujard, and P.~Zograf.
\newblock {\itshape Contribution of one-cylinder square-tiled surfaces to {M}asur-{V}eech volumes}, with an appendix by P.~Engel.
\newblock {\em Ast\'{e}risque}, {\bf 415}: 223--274, 2020.
% Quelques aspects de la th\'{e}orie des syst\`emes              dynamiques: un hommage \`a Jean-Christophe Yoccoz. I,

% \bibitem[GKZ]{GKZ} H.~Gangl, M.~Kaneko, D.~Zagier:
% \newblock {\it Double zeta values and modular forms},
% \newblock in "Automorphic forms and zeta functions", World Sci. Publ., Hackensack, NJ (2006), 71--106.

\bibitem[H]{H} M.E.~Hoffman: 
\newblock {\itshape The algebra of multiple harmonic series},
\newblock J. Algebra {\bf 194} (1997), 477--495.

\bibitem[H2]{H2} M.E.~Hoffman:
\newblock {\itshape Quasi-shuffle products}, 
\newblock J. Algebraic Combin. {\bf 11} (2000), 49--68.

\bibitem[HI]{HI} M.E.~Hoffman, K.~Ihara: 
\newblock {\itshape Quasi-shuffle products revisited}. 
\newblock J. Algebra {\bf 481} (2017), 293--326.

\bibitem[IKZ]{IKZ} K.~Ihara, M.~Kaneko and  D.~Zagier:
\newblock {\itshape Derivation and double shuffle relations for multiple zeta values}, 
\newblock Compositio Math. {\bf 142} (2006), 307--338.

\bibitem[I]{vI} J.W.M.~v.~Ittersum: 
\newblock {\itshape A symmetric Bloch--Okounkov Theorem}, \newblock Res. Math. Sci. {\bf 8}, 19 (2021).

\bibitem[I2]{I2} J.W.M.~v.~Ittersum:
\newblock {\itshape The Bloch--Okounkov theorem for congruence subgroups and Taylor coefficients of quasi‑Jacobi forms}, 
\newblock preprint, \href{https://arxiv.org/abs/2102.12964}{arXiv:2102.12964} (2021), 49 pp.

\bibitem[KZ]{KZ95} M.~Kaneko and D.~Zagier:
\newblock {\itshape A generalized Jacobi theta function and quasimodular forms},
\newblock In {\em The moduli space of curves (Texel Island, 1994)}, 149--163,
Progr. Math., 129, Birkhäuser Boston, Boston, MA, 1995.

\bibitem[M]{M} K.~Matsumoto: 
\newblock {\itshape On analytic continuation of various multiple zeta-functions}, 
\newblock Number Theory for the Millenium, II (Urbana, 2000), A. K. Peters, Natick, MA (2002), 417--440.

\bibitem[OT]{OT}  J.~Okuda and Y.~Takeyama:
\newblock {\it On relations for the multiple q-zeta values}, \newblock  Ramanujan J. {\bf 14} (2007), 379--387

\bibitem[OZ]{OZ} Y.~Ohno and D.~Zagier:
\newblock {\itshape Multiple zeta values of fixed weight, depth, and height},
\newblock Indag. Math. {\bf 12}:4 (2001), 483--487.

% \bibitem[O]{O}  A.~Okounkov:
% \newblock {\it Hilbert schemes and multiple $q$-zeta values}, \newblock Funct. Anal. Appl. 48 (2014), 138--144.

\bibitem[P]{P} Y.~Pupyrev: 
\newblock {\itshape Linear and algebraic independence of q-zeta values}, \newblock Math. Notes {\bf 78}(4), 563--568 (2005). Translated from Matematicheskie Zametki, {\bf 78}(4), 2005, pp. 608--613.

\bibitem[Sch]{S} K.~Schlesinger:
\newblock {\itshape Some remarks on q-deformed multiple polylogarithms}, \newblock preprint, \href{https://arxiv.org/abs/math/0111022}{arXiv:math/0111022} (2001), 11 pp.

\bibitem[Sin]{Sin} J. Singer: 
\newblock {\itshape $q$-Analogues of Multiple Zeta Values and their application in renormalization},  \newblock Dissertation, Erlangen-N\"urnberg University (2017).

\bibitem[Za]{Z} D.~Zagier:
\newblock {\itshape Partitions, quasimodular forms, and the Bloch--Okounkov theorem}, \newblock Ramanujan J. {\bf 41} (2016), no. 1-3, 345--368.	

\bibitem[Za2]{Zag} D.~Zagier: 
\newblock {\itshape The Mellin transform and other useful analytic techniques},
\newblock Appendix to E.~Zeidler, Quantum Field Theory I: Basics in Mathematics and Physics. A Bridge Between Mathematicians and Physicists, Springer-Verlag, Berlin-Heidelberg-New York (2006), 305--323 
		
\bibitem[Zh]{Zh} J.~Zhao:
\newblock {\itshape Uniform approach to double shuffle and duality relations of various $q$-analogs of multiple zeta values via Rota-Baxter algebras},  
\newblock Periods in Quantum Field Theory and Arithmetic, Springer Proceedings in Mathematics \& Statistics {\bf 314} (2020), 259--292.
		
\bibitem[Zu]{Zu} V.V.~Zudilin:
\newblock {\itshape
Algebraic relations for multiple zeta values},
\newblock Uspekhi Mat. Nauk {\bf 58} (2013), 1(349), 3--32.

\bibitem[Zu2]{Zu2} V.V.~Zudilin:
\newblock {\itshape
Multiple $q$-Zeta Brackets},
\newblock Mathematics {\bf 3} (2015), 119-130.

\end{thebibliography}
\end{document}